\documentclass[11pt]{article}
\usepackage{amsfonts, amsmath, amssymb, amscd, amsthm, color, graphicx, mathrsfs, wasysym, setspace, mdwlist, float}
\newcommand{\calA}{\mathcal A}

\newcommand{\ZZ}{\mathbb Z}
\newcommand{\NN}{{\mathbb N}}

\newcommand{\Lab}{{\bf Lab}}
\newcommand{\CL}{\operatorname{CL}}

\newcommand{\Cont}{\operatorname{Cont}}

\newtheorem{thm}{Theorem}[section]
\newtheorem{cor}[thm]{Corollary}
\newtheorem{lem}[thm]{Lemma}

\newtheorem{prop}[thm]{Proposition}

\theoremstyle{definition}
\newtheorem{defn}[thm]{Definition}

\theoremstyle{remark}
\newtheorem{rem}[thm]{Remark}
\newtheorem{ex}[thm]{Example}
\newtheorem{claim}[thm]{Claim}
\newtheorem{conv}[thm]{Convention}

\usepackage{hyperref}
\hypersetup{linktocpage}
\usepackage{xcolor}
\hypersetup{colorlinks,
    linkcolor={red!50!black},
    citecolor={blue!80!black},
    urlcolor={blue!80!black}}
\title{Conjugator length in finitely generated groups}
\author{G. Goffer, M. Mihaila, D. Osin}
\date{}

\begin{document}

\maketitle

\vspace{-3mm}
\begin{abstract}
We describe all functions $\mathbb{N}\rightarrow \mathbb{N}$ that can be realized, up to the standard equivalence, as conjugator length functions of finitely generated groups. Furthermore, we show that any two increasing functions $f,g\colon \NN\to \NN$ can be simultaneously realized as conjugator length functions of finitely generated, commensurable  (in particular, quasi-isometric) groups. 
\end{abstract}

\section{Introduction}

The interplay between algorithmic complexity and asymptotic invariants of finitely generated groups is one of the central themes in geometric group theory. Prominent examples arising in this way include Dehn functions, subgroup distortion, and many others (see~\cite{Gro}). When studying such invariants, two fundamental questions arise:
\begin{itemize}
    \item[(a)] What values can the invariants attain?
    
    \item[(b)] How do these values behave under natural equivalence relations on the set of finitely generated groups? 
\end{itemize}

In this paper, we address these questions for the conjugator length function, an asymptotic invariant closely related to the conjugacy problem that has attracted significant attention in recent years. At the final stage of our work, we saw the preprint \cite{Van26} and learned that Vandeputte had independently obtained results that cover Theorem~1.3 and Proposition~6.1 of this paper. We therefore make no claim of priority for these overlapping results. Nevertheless, our proofs use different methods, and Theorem~1.4 is not covered by \cite{Van26}; moreover, its proof relies heavily on the construction developed in the proof of Theorem~1.3. Thus, we believe that our paper may still be of interest. Our results were obtained independently, and a preprint version of the paper circulated among experts in the summer of 2026; it was also cited in Section~5 of \cite{BRS}. For these reasons, we have decided to post the paper in its current form.

We begin by recalling the definition.  Let $G$ be a group generated by a finite set $X$. For $g\in G$, we denote by $|g|_X$ the word length of $g$ with respect to $X$.

\begin{defn}
   Given two conjugate elements $a,b\in G$, let $c_X(a,b)$ be the length of a shortest element of $G$ conjugating $a$ to $b$; that is,
    $$
    c_X(a,b)=\min \big\{ |g|_X\mid g\in G,\; g^{-1}ag=b\big\}.
    $$
    Further, for each $n\in \NN$, let $$\CL_{G,X}(n)=\max \{ c_X(a,b)\},$$ where the maximum is taken over all pairs of conjugate elements $a,b\in G$ of total length $|a|_X+|b|_X\le n$. The map $\CL_{G,X}\colon \NN\to \NN\cup\{0\}$ is called the \emph{conjugator length function} of $G$ with respect to  $X$.
\end{defn}

Recall that the \textit{word problem} for the group $G$ is to decide whether a given word over the alphabet $X^{\pm 1}$ represents the trivial element. Similarly, the \textit{conjugacy problem} asks whether two given words represent conjugate elements in $G$. It is straightforward to verify that, for any finitely generated group $G$ with decidable word problem, the conjugacy problem is decidable if and only if $\CL_{G,X}$ is computable (or, equivalently, bounded by a computable function from above). 

Given this close relationship, it is not surprising that the study of the conjugator length function, at least implicitly, has been present in geometric and combinatorial group theory for a long time. Indeed, computing upper bounds for $\CL_{G,X}$ often goes hand in hand with solving the conjugacy problem in $G$; see, for example, \cite{G60} and \cite{L90}, where explicit linear upper bounds on conjugator length are obtained for small cancellation and hyperbolic groups, respectively. However, the first formal definition of $\CL_{G,X}$ appears to be given by Brick and Corson in \cite{BC}, where it is introduced under the name of the \textit{annular width function}. Since then, conjugator length functions have attracted significant attention and have been computed for many important classes of groups \cite{BD14,BM22, MM00,S15a,S15b,S16a,S16b,T13}.

Similarly to other asymptotic invariants, conjugator length functions are usually studied up to a natural equivalence. 

\begin{defn}
    For two functions $f,g\colon \NN\to \NN\cup\{0\}$, we write $f\preccurlyeq g$ if there exist $C,D\in \NN$ such that
\[
f(n)\le C g(Dn)\qquad \forall\, n\in \NN.
\]
Further, we denote by $\sim $ the equivalence relation on ${\NN\cup\{0\}}^\NN$ induced by the quasi-order $\preccurlyeq$. That is, we have $f\sim g$, if and only if $f\preccurlyeq g$ and $g \preccurlyeq f$. 
\end{defn}

It is straightforward to verify that $\CL_{G,X}\sim \CL_{G,Y}$ for any finite generating sets $X$ and $Y$ of a group $G$ (see \cite[Theorem 1.2]{BC}). In what follows, we often consider the conjugator length function up to equivalence and omit the subscript $X$ from the notation.

The question of which functions can be realized, up to equivalence, as $\CL_{G}$ for a finitely generated group $G$ has recently attracted significant attention. In \cite{BR25b,BR25a}, Bridson and Riley showed that all polynomial functions $n^d$, $d\in \mathbb{N}$, occur as conjugator length functions of nilpotent groups. In \cite{BR1225}, they further constructed a family of finitely presented groups with conjugator length functions $n^\alpha$ for $\alpha$ in a dense subset of $[2,\infty)$. 

An ultimate result in this direction was recently obtained in \cite{GW}, where Gillis and Wagner proved that all time functions of $S$-machines can be realized, up to a quadratic summand, as conjugator length functions of finitely presented groups. As a very particular case, this implies that, for any algebraic real number $\alpha \ge 2$, as well as for $\alpha = \pi$ and $\alpha = e$, there exists a finitely presented group $G$ such that $\CL_G(n) \sim n^\alpha$. Furthermore, all functions realizable as Dehn functions can also be realized as conjugator length functions of finitely presented groups \cite[Theorem 1.4]{GW}.

This left open the question of whether a superlinear subquadratic function can be realized as $\CL_G$ for a finitely presented (or just finitely generated) group $G$. These problems were explicitly stated in \cite{GW}; shortly after, the question for finitely generated groups was answered by Vandeputte \cite{Van} affirmatively. Furthermore, \cite[Theorem~C]{Van} shows that a wide class of functions satisfying certain growth and regularity conditions (e.g., any function $f\colon \NN\to \NN\cup\{0\}$ such that $f(n)\ge n^3$ and $f(3n)\ge 3f(n)$) can be realized as $\CL_G$ for a finitely generated solvable group of derived length $3$. 

Despite these advances, the general realization problem for finitely generated groups remained open. Our first result provides a complete solution. As mentioned above, it was independently proved by Vandeputte \cite{Van26}. Below, we say that a function $f\colon \NN\to \NN\cup\{0\}$ is \textit{at least linear} if $n\preccurlyeq f(n)$.

\begin{thm}\label{Thm:main1}
For any group $G$ and any finite generating set $X$ of $G$, the function $\CL_{G,X}$ is non-decreasing and either bounded or at least linear. Conversely, for any non-decreasing function $f \colon \NN\to \NN\cup\{0\}$ that is either bounded or at least linear, there exists a finitely generated group $G$ such that $\CL_G(n)\sim f(n)$; moreover, if $f$ is computable, the group $G$ can be chosen recursively presented.
\end{thm}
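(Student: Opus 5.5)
Monotonicity is immediate from the definition, since the maximum in $\CL_{G,X}(n)$ is taken over a set of pairs that grows with $n$. For the dichotomy, suppose $\CL_{G,X}$ is unbounded. Then for each $k$ there are conjugate $a,b$ with $|a|_X+|b|_X=m$ and $c_X(a,b)\ge k$. We want to show $\CL(n)\ge n/C$ for all $n$. The natural approach is to shorten such a pair while keeping a long minimal conjugator. Fix a minimal conjugator $g=x_1\cdots x_\ell$ of length $\ell=c_X(a,b)$. Consider the conjugates $a_i=(x_1\cdots x_i)^{-1}a(x_1\cdots x_i)$ for $0\le i\le \ell$, and let $L=\max_i |a_i|_X$. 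Any pair $(a_i,a_j)$ with $i<j$ has minimal conjugator length exactly $j-i$; otherwise we could shortcut $g$. Hence $\CL(2L)\ge \ell$. The sequence $|a_i|$ changes by at most $2$ at each step.

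To get linear lower bounds, the key point is that a length-$r$ subword of the path gives a pair of length at most $2L$ with conjugator length $r$. This alone only bounds $\CL$ at the scale $2L$. So instead I would choose $i$ where $|a_i|$ is minimal, say $|a_i|=s$. Then for every $j$, the pair $(a_i,a_j)$ has size at most $2s+2|j-i|$ and conjugator length $|j-i|$. Since $\ell$ is unbounded, either $\ell-i$ or $i$ is large, which gives $\CL(2s+2r)\ge r$ for $r$ up to $\ell/2$.

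The remaining issue is that $s$ may not be uniformly bounded. For this I would use the finitely many elements of length at most $s$: if $\CL$ is unbounded, then a pigeonhole and compactness argument (König's lemma) over bi-infinite geodesic conjugation paths starting in the finite ball should give a fixed $a$ with $|a|\le s_0$ and arbitrarily long minimal conjugators. Concretely, take $s_0$ minimal so that elements of length at most $s_0$ admit arbitrarily long minimal conjugation geodesics. This yields $\CL(n)\ge (n-2s_0)/2$, which is at least linear. I would check this step carefully: it requires that minimal $|a_i|$ along paths does not tend to infinity, and the fallback is to argue via bounded steps that $\CL(n)\ge n/4$ for all $n\le 2L$ directly.

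For the converse, the bounded case is handled by $\ZZ$ (or the trivial group, with $\CL=0$). For at least linear $f$, I would construct $G$ as a group combining a free or hyperbolic piece with small-cancellation relations encoding $f$. For each $n$, add a pair of conjugate elements $a_n,b_n$ of length about $n$ whose only conjugators have length about $f(n)$. A concrete choice is to embed, via a graded small cancellation / Osin-type construction, relations $t_n^{-1}a_nt_n=b_n$ where $t_n$ is a word of length $f(n)$ over the generators. The small cancellation condition, with long relators, forces the conjugator length to be at most linear on generic pairs and about $f(n)$ on the planted pairs. The main obstacle is proving the upper bound $\CL\preccurlyeq f$: every conjugate pair must have a conjugator of length $\preccurlyeq f$. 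I would attack it through annular diagram analysis in the graded small cancellation setting, using Greendlinger-type lemmas to cut annuli and bound their width by the relator lengths involved, which are themselves controlled by $f$ at the appropriate scale. Monotonicity of $f$ is what makes this bound consistent. When $f$ is computable, the relator set is recursively enumerable, so $G$ is recursively presented.
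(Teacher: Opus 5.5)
There are genuine gaps in both directions. For the dichotomy, your shortcut observation ($c(a_i,a_j)=|j-i|$ along a minimal conjugator) is correct. However, it only yields a linear lower bound once you have a \emph{single} element of bounded length whose conjugacy class needs arbitrarily long conjugators, and you never produce one. K\"onig's lemma gives nothing if the minima $s$ along your paths tend to infinity. The fallback also fails: if $s\gg\ell$, every pair on the path has total length at least $2s$ but conjugator length at most $\ell$. The missing input is algebraic, and it is what the paper uses (Proposition~\ref{Prop:BCL}). If $|G:Z(G)|<\infty$, every conjugator can be replaced by an element of a finite transversal of $Z(G)$, so $\CL_{G,X}$ is bounded. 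Since $Z(G)=\bigcap_{x\in X}C_G(x)$, unbounded $\CL_{G,X}$ forces some generator $x$ to have $|G:C_G(x)|=\infty$. The conjugators from $x$ to a given $b$ form a coset of $C_G(x)$, and a ball meets only finitely many cosets, so $\sup_b c_X(x,b)=\infty$. Your path argument then runs with $s_0=1$. (The paper instead takes a conjugate $h$ of $x$ with $n-2\le |h|_X\le n$, which exists because lengths change by at most $2$ per letter, and uses $|x^s|_X\le 1+2|s|_X$.) Also, $\ZZ$ has $\CL\equiv 0$, which is not equivalent to a nonzero bounded $f$; for those you need a group with bounded but nonzero $\CL$, e.g.\ a finite nonabelian group.

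For the converse, the construction is only gestured at, and the route you name cannot work as stated. The relator $t_n^{-1}a_nt_nb_n^{-1}$ and the cyclic shift $t_n^{-1}a_n^{-1}t_nb_n$ of its inverse both begin with $t_n^{-1}$. This is a piece of length $f(n)$ in a relator of length about $2f(n)+2n$, i.e.\ at least a quarter of the relator when $f(n)\ge n$. So these relators violate every small cancellation hypothesis (classical, graded, or over hyperbolic groups), and Greendlinger-type lemmas are unavailable. More importantly, the upper bound $\CL\preccurlyeq f$, which is the heart of the theorem, is not argued at all. An annular diagram with boundary length $n$ may a priori contain relators of arbitrarily large index, and you give no mechanism to exclude or control them. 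The paper avoids small cancellation. It uses the HNN-extension of $F(x,y,z,r,s)$ with relations $t^{-1}C_it=D_i$, where $C_i=z^{-f(i)}x^irx^iz^{f(i)}$ and $D_i$ is its image under an isometric automorphism, so the free base group embeds isometrically. After passing to cyclically $t$-minimal boundary labels, an annular diagram falls into one of two cases. Either it has only radial $t$-bands, where the surviving blocks $x^irx^i$ yield a connecting path of length at most $\ell(\partial\Delta)+1$. Or it consists of a single concentric $t$-band whose rank $i$ is bounded by the boundary length, which yields the connecting path $z^{f(i)}tz^{f(i)}$ of length at most $2f(n)+1$. The lower bound comes from the pair $x^nrx^n$, $y^nsy^n$.
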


It is easy to see that the conjugator length function of a finitely generated group $G$ is bounded precisely when the quotient $G/Z(G)$, of $G$ by its center, is finite. Although this is an elementary observation, we could not find its proof in the literature; for completeness, we provide it in Section \ref{sec:bounded CL} (see Proposition \ref{Prop:BCL}).

Recall that two groups are \textit{commensurable} if they contain isomorphic subgroups of finite index. For finitely generated groups, commensurability is an important instance of the more general notion of quasi-isometry. Many asymptotic invariants traditionally studied in geometric group theory exhibit a certain level of stability under these relations. In contrast, invariants related to conjugacy tend to behave rather poorly, as illustrated by the Collins--Miller result \cite{CM77}, showing that solvability of the conjugacy problem in a finitely presented group need not pass to finite index subgroups or finite extensions, and by the extreme non-invariance of the conjugacy growth function established in \cite{HO}.

Thus, one expects the conjugacy length function to exhibit similarly pathological behavior. Our next theorem confirms this expectation in its strongest possible form.

\begin{thm}\label{Thm:main2}
For any non-decreasing, at least linear functions $f,g \colon \NN\to \NN\cup\{ 0\}$, there exist commensurable (in particular, quasi-isometric) finitely generated groups $A$ and $B$ such that $\CL_A(n)\sim f(n)$ and $\CL_B(n)\sim g(n)$.
\end{thm}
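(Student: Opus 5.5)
We will build on the construction behind Theorem~\ref{Thm:main1}. The guiding idea is that a finite-index subgroup can "see" the conjugator length of an element only up to conjugation by coset representatives. Conversely, a finite extension can merge or twist conjugacy classes of the subgroup, and this can change $\CL$ drastically.

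Concretely, we aim for groups $A$ and $B$ that share a common index-$2$ subgroup $H$. A natural choice is $H = G_f\times G_g$, where $G_f$ and $G_g$ are the groups from Theorem~\ref{Thm:main1} realizing $f$ and $g$. Taking the direct product of these two groups would give $\CL\sim\max(f,g)$, which is not what we want. So we instead pick $A$ and $B$ as two different extensions of a common base in which one of the factors is neutralized.

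The plan I would try first is the following. Let $K_f$ and $K_g$ be the groups constructed in the proof of Theorem~\ref{Thm:main1}; I expect these to be central-type extensions, or groups built from labelled presentations. Form $H=K_f\times K_g$ and define
\[
A = H\rtimes \ZZ/2,\qquad B = H\rtimes \ZZ/2,
\]
with two different involutions acting on $H$. In $A$, the involution should act on $K_g$ by an automorphism whose effect is that every pair of conjugate elements with large conjugator in $K_g$ becomes conjugate by a short element in $A$ (using the coset of the involution). At the same time it should leave the hard pairs in $K_f$ intact. In $B$ we do the symmetric thing. If the construction in Theorem~\ref{Thm:main1} is sufficiently flexible, the hard pairs $(a_n,b_n)$ realizing $f$ come from a prescribed relation $b_n = t^{-N}a_nt^{N}$ with $N\approx f(n)$. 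In that case an involution inverting $t$, or swapping $a_n$ with $b_n$, provides the required short conjugator.

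The key steps, in order, are:

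1. Isolate from the proof of Theorem~\ref{Thm:main1} an explicit family of "witness" pairs realizing the lower bound.

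2. Construct the automorphism $\sigma$ of $K_g$ of order $2$, and verify that it shortcuts all witness pairs in $K_g$.

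3. Prove the upper bound $\CL_A\preccurlyeq f$. This requires analyzing conjugacy in $A$ coset by coset, distinguishing elements in $H$ from elements in $H\sigma$. Since $f$ and $g$ are at least linear, additive linear errors are absorbed.

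4. Prove the lower bound $\CL_A\succcurlyeq f$. For this, we check that the witness pairs in $K_f$ cannot be conjugated cheaply using $\sigma$, because $\sigma$ acts trivially on $K_f$.

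The main obstacle is step 3 for $K_g$. We must show that \emph{all} conjugate pairs in $K_g$, not just the witness pairs, admit conjugators in $A$ of length bounded by roughly $f$ or $n$. This likely forces us to redesign $K_g$ so that the only source of long conjugators is the controlled mechanism killed by $\sigma$. Pairs in the nontrivial coset $H\sigma$ must also be handled; twisted conjugacy there may introduce new long conjugators.

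If this direct approach fails, the fallback is to build $A$ and $B$ simultaneously from a single labelled construction. In that version, the index-$2$ overgroup adds a generator identifying two "copies" of the distortion mechanism.
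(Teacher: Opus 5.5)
Your architecture matches the paper's. An involution acting trivially on $K_f$ and by $\sigma$ on $K_g$ gives $A=(K_f\times K_g)\rtimes\ZZ/2\cong K_f\times(K_g\rtimes_\sigma\ZZ/2)$, and symmetrically $B\cong(K_f\rtimes\ZZ/2)\times K_g$. These are exactly the paper's groups $G_f\times E_g$ and $E_f\times G_g$, with common index-two subgroup $G_f\times G_g$. With this splitting, your steps 3 and 4 reduce, via the elementary estimate $\max\{\CL_P,\CL_Q\}\le\CL_{P\times Q}\le\CL_P+\CL_Q$, to one claim: $\CL_{K_g\rtimes_\sigma\ZZ/2}\preccurlyeq n$. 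Nothing weaker suffices, since $f$ may be linear while $g$ is arbitrary. That claim is the whole content of the theorem, and the proposal does not supply it. You neither specify $\sigma$ nor prove any upper bound, and you flag exactly this as the ``main obstacle.'' As it stands this is a plan, not a proof.

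Two ideas are missing.

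First, the involution. For the group of Theorem~\ref{Thm:main1}, the map $x\leftrightarrow y$, $r\leftrightarrow s$, $z\mapsto z^{-1}$, $t\mapsto t^{-1}$ is an automorphism. This holds because the HNN isomorphism $C\to D$ is the restriction of an involutive automorphism $\phi$ of $F(\calA)$. It sends the witness $x^nrx^n$ to $y^nsy^n$, so the adjoined involution $a$ conjugates each hard pair in one step. Inverting $t$ by itself buys nothing: in your model, a conjugator like $t^N$ stays long. What matters is that $\sigma$ swaps the two members of every hard pair. The group of Theorem~\ref{Thm:main1} already has this symmetry, so no redesign is needed.

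Second, and more substantially, every conjugate pair in $E=G\rtimes\langle a\rangle$ must have a linear conjugator, including twisted pairs in the coset $Ga$. The paper avoids a coset-by-coset analysis by setting $b=ta$ and eliminating $y,s,t$. This gives the $\{a,b\}$-bilateral presentation $E=\langle x,r,z,a,b\mid a^2,\ b^2,\ z^a=z^{-1},\ [b,(x^irx^i)^{z^{f(i)}}]\ (i\in\NN)\rangle$. It satisfies ($\ddag$), since the two sides of a $b$-band now carry freely equal labels. Now take a strongly $\{a,b\}$-minimal annular diagram with cyclically minimal boundary labels (Lemma~\ref{Lem:CTM}).
\begin{itemize}
\item If a boundary label contains $b$, there is a radial $b$-band. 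The cancellation estimates from the proof of Theorem~\ref{Thm:main1} then give a path of length at most $\ell(\partial\Delta)+1$ joining the boundary components. This needs strong minimality to exclude two equal-rank faces sharing an $r$-edge in the second possible gluing of the relators $[b,C_i]$.
\item If neither label contains $b$, there are no $b$-faces. The pair is then conjugate in the isometrically embedded hyperbolic group $\langle x,r,z,a\rangle\cong F_2*D_\infty$, which has linear conjugator length.
\end{itemize}
Without an argument of this kind, step 3 remains unproved, and your fallback is too vague to assess.
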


In the proof of Theorems \ref{Thm:main1} and \ref{Thm:main2}, the required groups are defined by  \textit{bilateral presentations} (see Definition \ref{Defn:bi}) generalizing the standard presentations of HNN-extensions. Along the way, we prove several results about such presentations, which appear to be of independent interest.

The paper is organized as follows. In the next section, we recall some standard notation and facts about van Kampen diagrams used in the paper. In Section \ref{sec:bilateral pres}, we introduce bilateral presentations and obtain some general facts about van Kampen diagrams over them. The proofs of Theorems \ref{Thm:main1} and \ref{Thm:main2} are given in Sections \ref{sec:proof of main1} and \ref{sec:proof of main2} respectively. Finally, in Section \ref{sec:bounded CL}, we discuss groups with bounded conjugator length. 

\paragraph{Acknowledgments.} The authors would like to thank Be'eri Greenfeld for helpful comments and discussions during his initial involvement in the early stages of this work. The third-named author was supported by the NSF grant DMS-2405032 and the Simons Fellowship in Mathematics MP-SFM-00005907.

\section{Preliminaries}\label{sec:prel}

\paragraph{2.1. Notation.} We begin by recalling the standard terminology and notation used throughout the paper. 

Let $G$ be a group generated by a set $X$. We denote by $F(X)$ the free group on $X$ and use the standard notation $X^{-1}=\{ x^{-1}\mid x\in X\}$ and $X^{\pm 1}=X\sqcup X^{-1}$. Let $U$, $W$ be words over the alphabet $X^{\pm 1}$. We denote by $\|U\|$ the length of $U$ (that is, the number of letters in $U$); further, we write $U \equiv W$ to indicate that two words $U$, $W$ over the alphabet $X^{\pm 1}$ are equal as strings of letters, and $U=_G W$ to indicate that they represent the same element of the group $G$. The concatenation of two words $W_1$ and $W_2$  over the alphabet $X^{\pm 1}$ is denoted by $W_1W_2$. A word $U$ is a \emph{cyclic shift} of $W$ if $U\equiv W_2W_1$ and $W\equiv W_1W_2$, for some subwords $W_1,W_2$ of $W$. 

For an element $g\in G$, we let $|g|_X$ denote the norm of $g$, namely, the length of a shortest word over $X$ representing $g$ in $G$. By abuse of notation, we sometimes refer to words over $X$ as elements of $G$ if there is no concern of ambiguities. A word $W$ over the alphabet $X^{\pm 1}$ is called \emph{reduced} if it does not contain a subword of the form $xx^{-1}$ or $x^{-1}x$, where $x\in X$. Further, $W$ is said to be \emph{geodesic in $G$} if $|W|_X=\|W\|$.

\paragraph{2.2. Van Kampen diagrams.}
We now briefly review the standard terminology and results on van Kampen diagrams necessary for our paper. For more details, we refer to  Chapter 5 of \cite{LS}.

In what follows, we refer to $0$--, $1$--, and $2$--dimensional cells of $2$-dimensional complexes as \textit{vertices, edges,} and \textit{faces}, respectively. Recall that a \emph{van Kampen diagram} $\Delta$ over a group presentation 
\begin{align}\label{eq:pres S|R}
    G=\langle \, X\mid \cal{R}\,\rangle
\end{align} is a finite, oriented, connected, planar $2$-complex embedded in the Euclidean plane and endowed with a labeling function $\Lab\colon E(\Delta) \to X^{\pm 1}$, such that $\Lab(e^{-1})=(\Lab(e))^{-1}$ for all $e\in E(\Delta)$, where $E(\Delta)$ denotes the set of oriented edges of $\Delta$.  In this paper, we will only consider disk or annular van Kampen diagrams. Recall that $\Delta$ is a \emph{disk diagram} if it is simply connected, and \emph{annular} if its complement in the plane consists of two connected components.

For a (combinatorial) path $p$ in $\Delta$, we denote by $\ell(p)$ its length, by $\Lab(p)$ its label, and by $p_-$, $p_+$ its starting and ending vertex, respectively. Given a face $\pi$ of $\Delta$, we denote by $\partial \pi$ the \textit{boundary} of $\pi$. Similarly, $\partial \Delta$ denotes the boundary of $\Delta$. If $\Delta $ is a disk diagram, we can think of $\partial \Delta$ as a combinatorial loop by picking a base vertex on $\partial \Delta$. Similarly, if $\Delta $ is annular, $\partial \Delta$ naturally decomposes into a (not necessarily disjoint) union of two loops, $\partial_{ext}\Delta $ and $\partial_{int}\Delta$, called the \textit{external and internal components of $\partial \Delta$}; as a set, $\partial_{ext}\Delta $ (respectively, $\partial_{int}\Delta$) can be defined as the intersections of $\partial \Delta$ with the unbounded (respectively, bounded) component of the complement of $\Delta$ in the plane. 

\begin{conv}\label{con:orientation}
We agree to read the labels of boundaries of faces and boundary components of $\Delta $ in the clockwise direction. These labels are defined up to a cyclic permutation, unless the starting point is explicitly mentioned; for example, we write $\Lab (\partial \pi)\equiv w$ for a face $\pi$ in $\Delta$ to mean that the label is equal to a cyclic permutation of a word $w$.
\end{conv}

For $\Delta$ to be a van Kampen diagram over (\ref{eq:pres S|R}), it is additionally required that  for any face $\pi$ of $\Delta$, we have 
$$
\Lab(\partial \pi)\equiv R^{\pm 1}, \;\;\;{\rm for\; some}\; R\in \mathcal{R}.
$$

We will use the following fundamental results on van Kampen diagrams (for details, see Sections 1 and 5 in Chapter 5 of \cite{LS} or Section 11 in \cite{OlshanskiiBook}). 

\begin{lem}\label{Lem:van Kampen} Let $G$ be a group given by the presentation (\ref{eq:pres S|R}). 
\begin{enumerate}
    \item[(a)] (Van Kampen Lemma)\; A non-empty word $W$ over the alphabet $X^{\pm 1}$ represents the identity in $G$ if and only if there exists a disk van Kampen diagram $\Delta$ over (\ref{eq:pres S|R}) with $\Lab (\partial\Delta)\equiv W$.
    \item[(b)] (Schupp Lemma) Non-empty words $U$ and $V$ over $X^{\pm 1}$ represent conjugate elements in $G$ if and only if there is an annular diagram $\Delta$ over (\ref{eq:pres S|R}) such that $\Lab(\partial_{ext} \Delta)\equiv U$ and $\Lab(\partial_{int}\Delta) \equiv V$.
\end{enumerate}
\end{lem}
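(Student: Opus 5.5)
We follow the classical arguments in Chapter~5 of \cite{LS} and handle the two parts in turn, deriving (b) from (a) by a cut-and-glue argument.

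For part (a), the ``if'' direction goes by induction on the number of faces of $\Delta$. If $\Delta$ has no faces, it is a tree, so its boundary label $W$ freely reduces to the empty word and $W=_G 1$. Otherwise, choose a face $\pi$ whose boundary meets $\partial\Delta$ in at least one edge; such a face exists because $\Delta$ is a finite planar disk complex. Write $\partial\Delta = p\,q$, where $q$ runs along $\partial\pi$. Removing $\pi$ (together with any edges that become dangling) gives a diagram $\Delta'$ with fewer faces, whose boundary label is freely equal to $W$ multiplied by a conjugate of $\Lab(\partial\pi)^{\pm1}$. By induction $\Lab(\partial\Delta')=_G1$, and hence $W=_G1$.

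For the ``only if'' direction, write
$$W = \prod_{i=1}^k u_i R_i^{\varepsilon_i} u_i^{-1}$$
in $F(X)$, with $R_i\in\mathcal R$ and $\varepsilon_i=\pm1$. We build the standard ``bouquet of lollipops'': each factor is a stem labeled $u_i$ ending in a face labeled $R_i^{\varepsilon_i}$, and the stems are arranged in cyclic order around a base vertex. Its boundary label is the unreduced product, which is freely equal to $W$. We then pass from this label to $W$ itself, performing the free cancellations and insertions one at a time, by folding (or unfolding) pairs of adjacent boundary edges $e,f$ with $\Lab(f)=\Lab(e)^{-1}$. Each fold preserves planarity and connectivity. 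It may, however, create degenerate pieces, namely edges not lying on any face or spherical bubbles. These are allowed in the singular-diagram framework of \cite{LS}, and can be discarded since a sphere component carries no boundary.

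\textbf{Main obstacle.} The delicate step is verifying that the folding process yields a genuine planar, connected, simply connected complex. In particular, one must check that folds never identify vertices in a way that breaks planarity. Here we would follow Lyndon--Schupp's argument verbatim: a fold identifies two consecutive boundary edges, and this operation is realized by a quotient of the plane, so planarity is automatic.

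For part (b), suppose $g^{-1}Ug=_G V$ for some word $g$. Then $U\,g\,V^{-1}g^{-1}=_G1$, and (a) gives a disk diagram whose boundary reads $U$, then $g$, then $V^{-1}$, then $g^{-1}$. Gluing the two boundary segments labeled $g$ produces an annular diagram with $\Lab(\partial_{ext}\Delta)\equiv U$ and $\Lab(\partial_{int}\Delta)\equiv V$. The orientations must be chosen to match Convention~\ref{con:orientation}; if $U$ or $V$ is not cyclically reduced, the resulting annulus may be singular, which is acceptable.

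Conversely, given an annular diagram $\Delta$, we use its connectivity to pick a simple path $t$ from a vertex $o_1$ of $\partial_{ext}\Delta$ to a vertex $o_2$ of $\partial_{int}\Delta$. Cutting $\Delta$ along $t$ gives a disk diagram with boundary label $U'\,\Lab(t)\,V'^{-1}\,\Lab(t)^{-1}$, where $U'$ and $V'$ are cyclic shifts of $U$ and $V$. Part (a) then shows that $U'$ and $V'$ are conjugate, and therefore so are $U$ and $V$.
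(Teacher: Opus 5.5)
Your route is the classical Lyndon--Schupp argument, which is all the paper relies on: it gives no proof of this lemma and simply cites \cite{LS} and \cite{OlshanskiiBook}. Part (a) and the ``if'' half of (b) are fine at that level of detail. One small correction to (a): in the induction, delete the open face together with an edge it shares with $\partial\Delta$, since deleting the face alone creates a hole.

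\textbf{The gap.} It is the sentence ``Gluing the two boundary segments labeled $g$ produces an annular diagram.'' Write $\partial\Delta_0=u\,p\,v\,q$ with labels $U$, $g$, $V^{-1}$, $g^{-1}$.
\begin{itemize}
\item The segments $p$ and $q$ are separated by $u$ and $v$ on $\partial\Delta_0$. So the fold argument from (a), which only handles adjacent edges, does not apply.
\item When $\Delta_0$ is singular, nothing you say guarantees that the quotient is planar with exactly two complementary regions.
\end{itemize}
The step can in fact fail, and lack of cyclic reduction is not the culprit. Over $\langle x,y\mid\emptyset\rangle$ take $U\equiv xx^{-1}$, $V\equiv yy^{-1}$ and $g$ empty. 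Then $\Delta_0$ is a vertex with two spikes, the gluing identifies nothing, and no hole appears.

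Worse, no annular diagram with these labels exists. Over this presentation an annular diagram is just a plane graph with two complementary regions, hence with exactly one cycle. Each edge of that cycle lies on both boundary components, so its label would have to occur in both $U$ and $V$.

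So the ``only if'' half of (b), with the paper's definition of annular diagram, needs an extra hypothesis. Either assume $U\neq_G 1$ (equivalently $V\neq_G 1$), or use a more permissive notion of diagram with $0$-edges and $0$-cells as in \cite{OlshanskiiBook}. Your remark that singular annuli are acceptable does not address this.

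\textbf{A repair when $U\neq_G 1$.} The idea is to reduce the gluing to folds of adjacent edges.
\begin{enumerate}
\item Identify $u_-$ with $u_+$ by contracting an arc drawn in the exterior region, chosen so that the new bounded complementary region is the one adjacent to $p\,v\,q$. This is a planar operation. It keeps $u$ as the outer boundary and creates a hole bounded by $p\,v\,q$. Here $u_-\neq u_+$: otherwise this vertex would be a cut vertex splitting off a disk subdiagram with boundary $u$, giving $U=_G1$.
\item Now the last edge of $q$ and the first edge of $p$ are adjacent on the boundary of the hole and carry mutually inverse labels. So you can zip $q$ against $p$ one pair of edges at a time, as in (a).
\item Suppose the two edges to be folded already form a loop $J$. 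If no complementary region lies inside $J$, discard the resulting sphere together with the spike it leaves behind. Otherwise the hole lies inside $J$. In that case everything inside $J$ other than the hole hangs off a single vertex, so the rest of the hole's boundary, whose label is a conjugate of $V^{-1}$, bounds a disk subdiagram. That gives $V=_G1$, a contradiction.
\item Once $p$ is used up, the hole is bounded by $v$, and you have the required annular diagram.
\end{enumerate}
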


\section{Bilateral group presentations}\label{sec:bilateral pres}

The analysis of van Kampen diagrams in the proofs of our main results relies on a generalization of certain ideas originally developed for HNN-extensions in \cite{MS}. In this paper, we work with a broader class of groups defined by ``HNN-like'' presentations. The purpose of this section is to prove several basic results about such groups, which will be used throughout the rest of the paper.

\paragraph{3.1. Bilateral presentations and geometry of van Kampen diagrams.} 
We begin by introducing the terminology.

\begin{defn}[\bf $T$-bilateral presentations]\label{Defn:bi}
Let $X$ and $T$ be disjoint sets. We say that a group presentation 
\begin{equation}\label{Eq:bi}
G=\langle X\cup T \mid \mathcal R \rangle 
\end{equation}
is \textit{$T$-bilateral} if every relator from $\mathcal R$ contains either no or exactly two letters from $T^{\pm 1}$.
\end{defn}

\begin{ex}\label{Ex:HNN}
    Let $G_0=\langle X\mid \mathcal R_0\rangle $ be an arbitrary group and let $A=\langle a_1, a_2, \ldots\rangle $ and $B=\langle b_1, b_2, \ldots \rangle $ be subgroups of $G_0$. For each $i$, we fix a word $A_i$ (respectively, $B_i$) over $X^{\pm 1}$ representing the element $a_i$ (respectively, $b_i$).  Suppose that there exists an isomorphism $\phi\colon A\to B$ such that $\phi(a_i)=b_i$ for all $i$. The associated \textit{HNN-extension} of the group $G_0$ is given by the presentation
\begin{equation}\label{Eq:HNN}
G=\langle\, X\cup\{t\}\mid \mathcal R_0,\; tA_it^{-1}=B_i, i=1,2, \ldots\,\rangle.
\end{equation} 
Clearly, this presentation is $\{ t\}$-bilateral. 
\end{ex} 

Let $\Delta$ be a van Kampen diagram over a $T$-bilateral presentation (\ref{Eq:bi}). By a \textit{$T$-edge} of $\Delta$ we will mean an edge labeled by $t^{\pm 1}$ for some $t\in T$.  Further, if $\pi$ is a face of $\Delta$ containing a $T$-edge, we say that $\pi $ is a $T$-\emph{face}. 

\begin{defn}[\bf $T$-bands and $T$-annuli]\label{def:t-bands}
A \textit{$T$-band} over (\ref{Eq:bi}) is a van Kampen diagram $\Sigma$ consisting of a sequence of pairwise distinct $T$-faces \begin{equation}\label{Eq:SeqPi}
\pi _1, \pi_2, \ldots , \pi _n    
\end{equation} 
such that every two consecutive faces in this sequence have a common $T$-edge. More precisely, for every $i=1, \ldots, n$, we have $$\partial \pi_i= e_{i-1}u_ie_i^{-1}v_i^{-1},$$ where $u_i$, $v_i$ are paths containing no $T$-edges,  and $e_{i-1}$, $e_i$ are $T$-edges. Thus, $e_i$ is the edge shared by $\pi_i$ and $\pi_{i+1}$ if $1\le i<n$ (see Fig. \ref{Fig1}). If, in addition, $e_0=e_n$, we call $\Sigma $ a \textit{$T$-annulus}. We call the paths $u=u_1\ldots u_n$ and $v=v_1\ldots v_n$ (respectively, $\Lab(u)$ and $\Lab(v)$) the \textit{sides} (respectively \textit{side labels}) of the $T$-band $\Sigma $. The sides and side labels of a $T$-annulus are defined up to simultaneous cyclic permutations caused by the choice of the first face in the sequence (\ref{Eq:SeqPi}).   
\end{defn}

\begin{figure}[H]
    \centering
    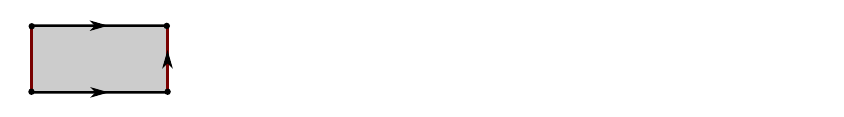
    \caption{A $T$-band formed by faces $\pi_1, \ldots, \pi_n$. $T$-edges are highlighted in red.}
    \label{Fig1}
\end{figure}

In this paper, $T$-bands will often arise as subdiagrams of a given van Kampen diagram $\Delta $ over (\ref{Eq:bi}). We say that a $T$-band in $\Delta$ is \textit{maximal} if it is not contained in any other $T$-band with a greater number of $T$-faces. Clearly, every maximal $T$-band $\Sigma $ of $\Delta$ is either a $T$-annulus or shares exactly $2$ edges ($e_0$ and $e_n$ in the notation of the previous paragraph) with $\partial \Delta$. In the latter case, we call $\Sigma $ a \textit{radial $T$-band} if $\Delta $ is annular and $e_0$ and $e_n$ belong to distinct components of $\partial \Delta$; if $\Delta $ is a disk diagram, or if $\Delta $ is annular and $e_0$ and $e_n$ belong to the same component of $\partial \Delta$, we say that $\Sigma $ is a \textit{$T$-arch}. 

For every $T$-annulus $\Sigma$ in $\Delta$, one of the sides of $\Sigma$ is contained in the region of the plane bounded by the other; we call these sides \textit{inner} and \textit{outer}, respectively. Further, we say that a $T$-annulus $\Sigma$ in $\Delta$  is \textit{$0$-homotopic} if it is contained in a disk subdiagram of $\Delta$. Thus, non-$0$-homotopic $T$-annuli cannot occur in disk diagrams; however, they can occur in annular diagrams when a $T$-band ``wraps around" the inner boundary component; we call such $T$-annuli \textit{concentric.} 

\begin{figure}[H]
    \centering
    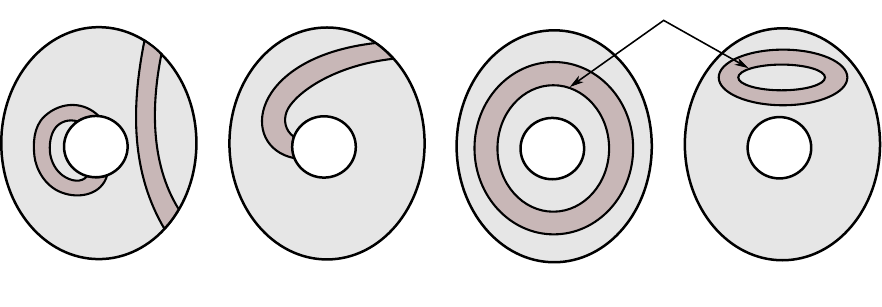
    \caption{Types of $T$-bands in van Kampen diagrams: (a) $T$-arches, (b) radial $T$-band, (c)~concentric $T$-annulus, (d) $0$-homotopic $T$-annulus.}
    \label{Fig2}
\end{figure}

We say that two $T$-bands in $\Delta$ \textit{intersect} if they share a $T$-face.
\begin{lem}\label{Lem:bands}
   In any van Kampen diagram over a $T$-bilateral presentation (\ref{Eq:bi}), distinct maximal $T$-bands do not intersect.
\end{lem}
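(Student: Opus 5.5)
The key point is that in a $T$-bilateral presentation every $T$-face has exactly two $T$-edges on its boundary, so a $T$-band is determined by any one of its faces. The plan is to make this precise by describing the "$T$-adjacency" structure of the diagram.

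First I would define an auxiliary graph $\Gamma$. Its vertices are the $T$-faces of $\Delta$. Two $T$-faces are joined by an edge of $\Gamma$ for each $T$-edge of $\Delta$ lying on the boundary of both. Since every relator contains exactly two letters from $T^{\pm 1}$, each $T$-face $\pi$ has exactly two $T$-edges on $\partial\pi$. Each edge of $\Delta$ lies on the boundary of at most two faces (one on each side). Hence every vertex of $\Gamma$ has degree at most $2$. Degree is strictly less than $2$ when one of the $T$-edges of $\pi$ lies on $\partial\Delta$.

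There is one subtlety here, and it is the step I expect to need the most care. A single $T$-edge could be shared by a face with itself, or the two $T$-edges of $\pi$ could both be shared with the same neighbour. Neither breaks the degree bound, since $\Gamma$ is allowed loops and multiple edges. With these conventions $\Gamma$ is a disjoint union of paths and cycles. A $T$-band in the sense of Definition~\ref{def:t-bands} is exactly a sequence of pairwise distinct $T$-faces forming a walk in $\Gamma$ through consecutive shared $T$-edges. So $T$-bands correspond to subpaths of the connected components of $\Gamma$ (a whole cycle giving a $T$-annulus).

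Next I would show that a maximal $T$-band $\Sigma$ consists of all faces of a connected component of $\Gamma$. Suppose $\Sigma$ misses a face of its component. Because components are paths or cycles, some end face $\pi_1$ or $\pi_n$ of $\Sigma$ then has its free $T$-edge $e_0$ or $e_n$ shared with a $T$-face not in $\Sigma$. Appending that face gives a longer band containing $\Sigma$, which contradicts maximality. If the component is a cycle and $\Sigma$ covers all of it, then $e_0=e_n$ and $\Sigma$ is a $T$-annulus, as expected.

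Finally I would conclude. Let $\Sigma$ and $\Sigma'$ be maximal $T$-bands sharing a $T$-face $\pi$. Both equal the full vertex set of the component of $\Gamma$ containing $\pi$, with the same cyclic or linear order up to reversal and cyclic shift. So they are the same band as subdiagrams, which proves the lemma. In the cycle case they differ only by the choice of the starting face, and the paper already treats such bands as equal via the convention on cyclic permutations.
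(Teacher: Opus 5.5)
Your proposal is correct and rests on the same fact as the paper's one-line proof: every $T$-face has exactly two $T$-edges, so two distinct intersecting maximal $T$-bands would force one of their common $T$-faces to carry more than two $T$-edges. Your degree-$2$ adjacency graph $\Gamma$ is a careful formalization of that argument, and it makes explicit two things the paper leaves implicit: the maximality step (a maximal band must exhaust its component of $\Gamma$), and the degenerate cases of a face sharing a $T$-edge with itself or two faces sharing both of their $T$-edges.
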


\begin{proof}
If two distinct maximal $T$-bands intersect, the boundary of one of their shared $T$-face must have more than two $T$-edges, contradicting the definition of a $T$-bilateral presentation. 
\end{proof}

\paragraph{3.2. Embedding of the base group.}
Throughout the rest of this section, we fix a group $G$ given by a $T$-bilateral presentation (\ref{Eq:bi}). 

\begin{defn}[\bf Base group]
  Associated to $G$ is the \textit{base group} 
\begin{equation}\label{Eq:base}
G_0=\langle X\mid \mathcal R_0\rangle ,
\end{equation}
where $\mathcal R_0$ consists of all relators from $\mathcal R$ that do not contain any letters from $T^{\pm 1}$.  
\end{defn}

In general, the natural homomorphism $G_0\to G$ induced by the identity map $X\to X$ is not necessarily an embedding. Our next goal is to provide sufficient conditions for the map to be injective and for the image of $G_0$ to be undistorted in $G$. To this end, we first introduce the following.

\begin{defn}[\bf $T$-conjugate words]
    Let $U$ and $V$ be two words in the alphabet $X^{\pm 1}$. We say that $U$ and $V$ are \textit{$T$-conjugate} if there exists a $T$-band over (\ref{Eq:bi}) with side labels $U$ and $V$.
\end{defn}

\begin{rem}
    It is useful to keep in mind that $T$-conjugacy is generally not an equivalence relation on the set of words over $X^{\pm 1}$. 
\end{rem}

To guarantee the desired properties of the natural homomorphism $G_0 \to G$, we will consider bilateral presentations satisfying certain additional conditions. Although these conditions may appear difficult to verify, they will be immediate for all specific presentations considered in this paper. Recall that for a word $W$ over the alphabet $X^{\pm 1}$, $|W|_X$ denotes the length of the element of $G_0$ represented by $W$ with respect to the generating set $X$.

\begin{itemize}
    \item[($\dag$)]  \hypertarget{A}{\textit{For any $T$-conjugate words $U$, $V$ over $X^{\pm 1}$, we have $U=_{G_0}1$ if and only if $V=_{G_0}1$.}}
    \item[($\ddag$)]  \hypertarget{AA}{\textit{For any $T$-conjugate words $U$, $V$ over $X^{\pm 1}$, we have $|U|_X=|V|_X$.}}
\end{itemize}

Note that \hyperlink{AA}{($\ddag$)} is obviously stronger than \hyperlink{A}{($\dag$)} since a word $W$ over $X^{\pm 1}$ represents $1$ in $G_0$ if and only if $|W|_X=0$. 

\begin{ex}\label{Ex:HNN1}
    In the setting of Example \ref{Ex:HNN}, the presentation 
(\ref{Eq:HNN}) always satisfies \hyperlink{A}{($\dag$)}. Indeed, the elements of $G_0$ represented by the side labels of any $T$-band are $a$ and $\phi(a)$ for some $a\in A$; the property \hyperlink{A}{($\dag$)} follows since $\phi$ is an isomorphism. Similarly, this presentation satisfies \hyperlink{AA}{($\ddag$)} whenever $|a|_X=|\phi(a)|_X$ for all $a\in A$.  
\end{ex} 

\begin{defn}[\bf $T$-minimal van Kampen diagrams]\label{def: T minimal van Kampen diagrams}
    A van Kampen diagram $\Delta $ over~(\ref{Eq:bi}) is said to be \textit{$T$-minimal} if it contains the smallest possible number of $T$-faces among all diagrams of the same homotopy type (diskor annular) with the same labels of boundary components. 
\end{defn}

\begin{lem}\label{Lem:0-hom}
    Let (\ref{Eq:bi}) be a $T$-bilateral presentation satisfying \hyperlink{A}{($\dag$)} and let $\Delta$ be a van Kampen diagram over (\ref{Eq:bi}).  If $\Delta$ is $T$-minimal, then it contains no $0$-homotopic $T$-annuli. 
\end{lem}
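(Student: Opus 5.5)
We argue by contradiction and use a standard surgery: assuming a $T$-minimal diagram $\Delta$ contains a $0$-homotopic $T$-annulus, we replace the region it bounds by a diagram with no $T$-faces, contradicting minimality. Let $\Sigma$ be a $0$-homotopic $T$-annulus in $\Delta$, with faces $\pi_1,\dots,\pi_n$, outer side $u$ and inner side $v$. Since $\Sigma$ lies in a disk subdiagram of $\Delta$, the outer side $u$ bounds a disk subdiagram $\Delta_1\subseteq\Delta$, which consists of $\Sigma$ together with the disk subdiagram $\Delta_2$ bounded by $v$. (If $\Delta$ is annular, $0$-homotopy means exactly that the inner boundary component of $\Delta$ is not enclosed by $u$.) The label $\Lab(u)$ is a word over $X^{\pm 1}$, because $u$ consists of the paths $u_i$, which contain no $T$-edges. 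The same holds for $\Lab(v)$.

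The key step is to show that $\Lab(u)=_{G_0}1$. The words $\Lab(u)$ and $\Lab(v)$ are the side labels of the $T$-band $\Sigma$. This is slightly delicate because $\Sigma$ is an annulus rather than a band: cutting along the shared edge $e_0=e_n$ gives a $T$-band with sides $u_1\cdots u_n$ and $v_1\cdots v_n$, read from $e_0$ to $e_n$. Hence these side labels (suitable cyclic shifts of $\Lab(u)$, $\Lab(v)$) are $T$-conjugate. Next, $\Lab(v)=_G 1$, because $v$ bounds the disk diagram $\Delta_2$. We cannot, however, immediately conclude that $\Lab(v)=_{G_0}1$, since $\Delta_2$ may itself contain $T$-faces.

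To get around this, we induct by choosing $\Sigma$ innermost: among all $0$-homotopic $T$-annuli of $\Delta$, pick one whose enclosed disk $\Delta_2$ contains the fewest faces. Then $\Delta_2$ contains no $T$-annulus, since any $T$-annulus inside it would also be $0$-homotopic in $\Delta$ and would enclose fewer faces. By Lemma~\ref{Lem:bands}, the maximal $T$-bands of $\Delta_2$ are disjoint from $\Sigma$. Any $T$-face of $\Delta_2$ would lie on a maximal $T$-band of $\Delta_2$, which must then be a $T$-arch ending on two $T$-edges of $\partial\Delta_2=v$. This is impossible because $v$ has no $T$-edges. Therefore $\Delta_2$ has no $T$-faces at all, so it is a diagram over $\langle X\mid\mathcal R_0\rangle$, and $\Lab(v)=_{G_0}1$. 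Condition~\hyperlink{A}{($\dag$)}, applied to the $T$-conjugate words, now gives $\Lab(u)=_{G_0}1$; being trivial in $G_0$ is invariant under cyclic shifts, so the choice of cut point does not matter.

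By Lemma~\ref{Lem:van Kampen}(a) applied to $G_0$, there is a disk diagram $\Delta_1'$ over $\langle X\mid\mathcal R_0\rangle$ with boundary label $\Lab(u)$. We cut $\Delta_1$ out of $\Delta$ and glue in $\Delta_1'$ along $u$. This yields a diagram of the same homotopy type with the same boundary labels and with at least $n\ge1$ fewer $T$-faces, contradicting $T$-minimality. If $\Lab(u)$ is empty, or $u$ is not a simple closed path, we first fold or use a $0$-cell as usual. The main obstacle is making this surgery rigorous when the outer side $u$ is not a simple closed curve, or when the enclosed region touches $\partial\Delta$. In those cases we would argue with the standard technique from \cite{LS,OlshanskiiBook} of excising subdiagrams bounded by a closed path and regluing.
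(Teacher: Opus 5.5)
Your proof is correct and follows the paper's own argument: you choose an innermost $0$-homotopic $T$-annulus, note that the disk it encloses has no $T$-faces (it contains no $T$-annuli and its boundary has no $T$-edges), so the inner side label is trivial in $G_0$, then use ($\dag$) to get the same for the outer side and replace the disk bounded by the outer side with a diagram over $G_0$. Your extra remarks, on cutting the annulus at $e_0$ (so the side labels are only defined up to cyclic shift) and on non-simple boundary paths, just make explicit details the paper leaves implicit.
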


\begin{proof}
Arguing by contradiction, we consider a $0$-homotopic $T$-annulus $\Sigma$, so that there are no other $T$-annuli inside the subdiagram $\Xi$ of $\Delta $ bounded by the inner side of $\Sigma$ (see Fig.~\ref{Fig3}).  Note that $\Xi$ is a disk diagram (even if $\Delta $ is not) since $\Sigma$ is $0$-homotopic. Let $u$ and $v$ be the inner and outer sides of $\Sigma$, respectively. Since $\Lab (u)$ contains no letters from $T^{\pm 1}$, $\Xi$ cannot contain $T$-bands. Therefore, $\Xi $ is a diagram over $(\ref{Eq:base})$, and we have $\Lab(u)=_{G_0}1$. By  \hyperlink{A}{($\dag$)}, we have $\Lab(v)=_{G_0}1$. Hence, there exists a disk diagram $\Xi^\prime$  over $(\ref{Eq:base})$ with boundary label $\Lab(v)$. We can then remove the subdiagram of $\Delta$ bounded by $v$ and patch the resulting hole with $\Xi'$, identifying $\partial \Xi'$ and $v$ in the obvious way. This operation does not affect the homotopy type and the boundary label of $\Delta$; however, the resulting diagram $\Delta'$ has fewer $T$-faces than $\Delta$, contrary to the assumption that $\Delta$ is $T$-minimal.
\end{proof}

\begin{figure}[H]
    \centering
    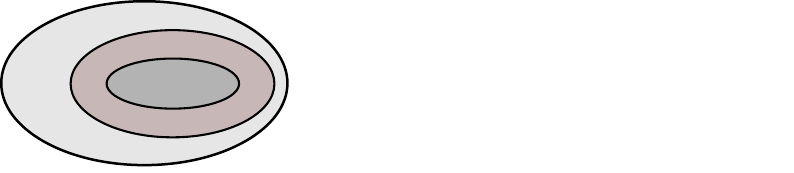
    \caption{Transforming the diagram $\Delta $ in the proof of Lemma \ref{Lem:0-hom}.}
    \label{Fig3}
\end{figure}

The first claim of the following proposition generalizes the well-known property of HNN-extensions to all bilateral presentations satisfying \hyperlink{A}{($\dag$)}.

\begin{prop}\label{Prop:embed}
Let $G$ be a group given by a $T$-bilateral presentation (\ref{Eq:bi}). 
\begin{enumerate}
    \item[(a)] If the presentation (\ref{Eq:bi}) satisfies  \hyperlink{A}{($\dag$)}, then the identity map $X\to X$ extends to an embedding of the base group $G_0\to G$. Thus, we can think of $G_0$ as a subgroup of $G$. 
    \item[(b)] If the presentation (\ref{Eq:bi}) satisfies  \hyperlink{AA}{($\ddag$)}, then the embedding $G_0\to G$ is isometric; that is, for every $g\in G_0$ we have $|g|_X=|g|_{X\cup T}$. 
\end{enumerate} 
\end{prop}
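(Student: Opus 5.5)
For part (a), we take a word $W$ over $X^{\pm 1}$ with $W=_G 1$ and aim to show $W=_{G_0}1$. By Lemma~\ref{Lem:van Kampen}(a) there is a disk diagram $\Delta$ over (\ref{Eq:bi}) with $\Lab(\partial\Delta)\equiv W$, and we choose $\Delta$ to be $T$-minimal. Since $\partial\Delta$ has no $T$-edges, $\Delta$ has no $T$-arches or radial bands. Thus every maximal $T$-band is a $T$-annulus, and in a disk diagram every $T$-annulus is $0$-homotopic. Lemma~\ref{Lem:0-hom} rules these out, so $\Delta$ has no $T$-faces. Hence $\Delta$ is a diagram over $\langle X\mid \mathcal R_0\rangle$, and $W=_{G_0}1$. (The case $W$ empty is trivial.)

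For part (b), fix $g\in G_0$. The inequality $|g|_{X\cup T}\le |g|_X$ is immediate. For the reverse, let $W$ be a geodesic word over $X$ for $g$ in $G_0$ and let $V$ be a shortest word over $(X\cup T)^{\pm1}$ representing $g$ in $G$. Take a $T$-minimal disk diagram $\Delta$ with boundary label $WV^{-1}$, so that $\partial\Delta=pq^{-1}$ with $\Lab(p)\equiv W$ and $\Lab(q)\equiv V$. As in (a), there are no $T$-annuli, and since $p$ carries no $T$-edges, every maximal $T$-band is a $T$-arch with both ends on $q$. I would then induct on the number of $T$-letters in $V$, proving the claim: \emph{if $V$ is any word over $(X\cup T)^{\pm1}$ with $V=_G W$, then $|W|_X\le\|V\|$}. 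Equivalently, one shows that $V$ can be replaced by a word over $X$ of no greater length that is equal to $W$ in $G_0$.

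For the inductive step, pick an innermost $T$-arch $\Sigma$ in $\Delta$, meaning one whose ends $e_0,e_n$ on $q$ cut off a subpath $q'$ of $q$ such that the subdiagram between $\Sigma$ and $q'$ contains no $T$-faces. Then $q=q_1e_0 q' e_n^{-1}q_2$ (up to orientation), and $\Lab(q')$ is a word over $X$. The side $u$ of $\Sigma$ adjacent to $q'$ bounds, together with $q'$, a subdiagram over $G_0$, so $\Lab(u)=_{G_0}\Lab(q')$ and hence $|\Lab(u)|_X\le \|q'\|$. Let $v$ be the other side. By ($\ddag$), $|\Lab(v)|_X=|\Lab(u)|_X\le\|q'\|$. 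Now replace the subword $t^{\pm1}\Lab(q')t^{\mp1}$ of $V$ with a geodesic $G_0$-word $V'$ for $\Lab(v)$. The new word has length at most $\|V\|-2$, still equals $g$ in $G$ (the band gives $t\Lab(u)t^{-1}\ldots$ equal to $\Lab(v)$), and has two fewer $T$-letters. Iterating until no $T$-letters remain gives a word over $X$ of length at most $\|V\|$ representing $g$ in $G$. By part (a) it represents $g$ in $G_0$, so $|g|_X\le \|V\|=|g|_{X\cup T}$.

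The main obstacle is the bookkeeping in the inductive step. One must carefully identify $q'$ together with the cut-off region as bounded by $\Sigma$'s side, note that the boundary of that region is $q'$ and $u$ with the $T$-edges at the endpoints matched, and check that the surgery keeps the equation valid in $G$. An alternative that avoids rebuilding diagrams is to run the argument purely at the level of words. This works because the band itself witnesses the equality $e_0\Lab(u)e_n^{-1}=_G\Lab(v)$ up to orientation, so the replacement is legitimate without any diagram surgery.
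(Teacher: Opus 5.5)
Your part (a) is the paper's argument verbatim. In (b) you use the same surgery as the paper: take an innermost $T$-arch, get $\Lab(u)=_{G_0}\Lab(q')$, then use ($\ddag$) to swap $e_0q'e_n^{-1}$ for a geodesic $X$-word for $\Lab(v)$, saving two letters. The paper phrases this as a contradiction with the geodesicity of $V$; you phrase it as an induction on $T$-content.

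The inductive step as you state it has a gap: it silently assumes every $T$-edge of $q$ lies on a $T$-face. The boundary path of a disk diagram may traverse an edge twice, in opposite directions, with no face attached. $T$-minimality does not exclude this, since such edges cost no faces. Because your inductive claim ranges over \emph{arbitrary} words $V$, this really occurs in two ways:
\begin{itemize}
\item For $V\equiv Wtt^{-1}$, the $T$-minimal diagram for $WV^{-1}$ has no $T$-faces at all, so there is no $T$-arch to pick.
\item If such a backtracking $T$-edge sits between the two ends of an arch, then $\Lab(q')$ contains $T$-letters, contrary to your assertion that it is a word over $X$.
\end{itemize}

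The missing piece is exactly the paper's Case 1. There are two ways to supply it.
\begin{itemize}
\item Add to your inductive step the case $q=q_1eq_2e^{-1}q_3$ with $e$ a $T$-edge. Then $eq_2e^{-1}$ is a closed path, so $\Lab(q_1q_3)=_G\Lab(q)$. This word is shorter and has smaller $T$-content, so the induction hypothesis applies.
\item Drop the induction and keep $V$ of minimal length, as you set it up initially. Then $q$ traverses no edge twice, since deleting the loop $eq_2e^{-1}$ would shorten $V$. So every $T$-edge of $q$ lies on a $T$-arch, and an innermost arch has $T$-free $q'$. A single surgery then gives a word of length at most $\|V\|-2$ representing $g$, a contradiction.
\end{itemize}
The second option is precisely the paper's argument, and it shows the iteration is unnecessary.
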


\begin{proof}
Clearly, $id\colon X\to X$ extends to a homomorphism $G_0\to G$. To prove (a), we need to verify that if  $W$ is a word over the alphabet $X^{\pm 1}$ and $W=_G1$, then $W=_{G_0}1$. To this end, let $\Delta $ be a $T$-minimal disk diagram over (\ref{Eq:bi}) such that $\Lab (\partial \Delta)\equiv W$. Since $W$ contains no letters from $T^{\pm 1}$, every $T$-band in $\Delta $ must be a $T$-annulus. Further, since $\Delta$ is a diskdiagram, every $T$-annulus in $\Delta $ must be $0$-homotopic. Using the minimality assumption and Lemma \ref{Lem:0-hom}, we conclude that $\Delta $ contains no $T$-bands at all. Thus, $\Delta $ is a diagram over the presentation (\ref{Eq:base}), and we have $W=_{G_0}1$. 

We now prove (b). Given $g\in G_0$, we consider a geodesic word $W$ over $(X\cup T)^{\pm 1}$ representing $g$ in $G$. We want to show that $W$ contains no letters from $T^{\pm 1}$. Arguing by contradiction, suppose it does. Let $Z$ be any word over $X^{\pm 1}$ representing $g$ in $G$ and let $\Delta $ be a $T$-minimal disk diagram over (\ref{Eq:bi}) with boundary label $ZW^{-1}$; thus, $\partial \Delta$ can be represented as a concatenation $zw^{-1}$, where $\Lab (z)\equiv Z$ and $\Lab(w)\equiv W$. 

Since $z$ contains no $T$-edges, every $T$-edge $e$ of $w$ must either be traveled by $w$ at least twice (in the opposite directions), or must be contained in a $T$-arch sharing another $T$-edge $f$ with $w$. In either case, we have a decomposition $$w=w_1ew_2f^{-1}w_3,$$ where $w_1$, $w_2$, $w_3$ are subpaths of $w$, and $e$ and $f$ are $T$-edges. 

 {\it Case 1.} Suppose first that $e=f$. Since $ew_2f^{-1}$ is a loop, we obtain $\Lab (ew_2f^{-1})=_{G}1$. Hence, $\Lab(w_1w_3)=_G\Lab(w)=_Gg$. Obviously, we have $\| w_1w_3\| < \| W\|= |g|_{X\cup T}$, which contradicts the assumption that $W$ is geodesic.
 
\begin{figure}[H]
    \centering
    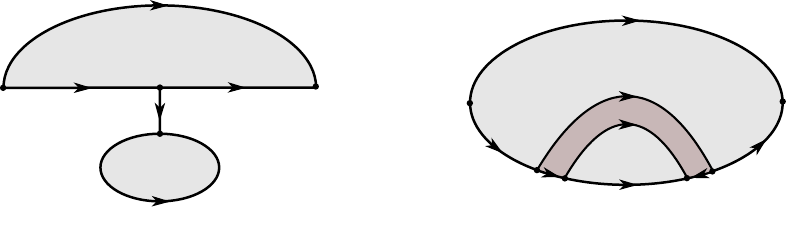
    \caption{Two cases in the proof of Proposition \ref{Prop:embed}.}
    \label{Fig6}
\end{figure}

{\it Case 2.} Assume now that there is a $T$-arch $\Sigma $ in $\Delta$ containing $e$ and $f$. Passing to another pair of $T$-edges if necessary, we can assume that $w_2$ contains no $T$-egdes.

Let $u$ and $v$ be sides of $\Sigma$. Up to adjusting the notation, we can assume that $v_-=e_-$, and $v_+=f_-$ (see Fig. \ref{Fig6}). Note that $$\Lab (w_1vw_3) =_G\Lab(w_1ew_2f^{-1}w_3) =_Gg.$$ 

By our assumption, $w_2$ contains no $T$-edges. Therefore, the subdiagram $\Xi$ bounded by $uw_2^{-1}$ contains no $T$-bands by Lemma \ref{Lem:0-hom} and, therefore, is a diagram over (\ref{Eq:base}). Thus, $\Lab (u)=_{G_0}\Lab (w_2)$. Combining with \hyperlink{AA}{($\ddag$)}, we obtain 
$$
|\Lab(v)|_{X\cup T}\le |\Lab(v)|_{X}= |\Lab(u)|_{X}\le \| \Lab(w_2)\|.
$$
Therefore, 
$$
\begin{array}{rcl}
|g|_{X\cup T} & \le & | \Lab (w_1)|_{X\cup T} + | \Lab (v)|_{X\cup T} + | \Lab (w_3)|_{X\cup T} = \\ && \\&& \|\Lab(w_1)\| + \| \Lab(w_2)\| + \| \Lab(w_3)\| =\| W\| -2.
\end{array}
$$
This contradicts the choice of $W$ again.

Thus, $W$ is a word over $X^{\pm 1}$, and we have $|g|_X\le \| W\| =|g|_{X\cup T}$. The opposite inequality is obvious.
\end{proof}

\paragraph{3.3. $T$-minimality and conjugacy in groups given by bilateral presentations.} We now obtain some useful results about the structure of annular van Kampen diagrams over a $T$-bilateral presentation (\ref{Eq:bi}). 

\begin{defn}[\bf $T$-minimal words] Let $W$ be a word  over $(X\cup T)^{\pm 1}$. We call the number of letters from $T^{\pm 1}$ in $W$ the $T$-content of $W$ and denote it by $\Cont_T(W)$. Further, we say that $W$ is \textit{$T$-minimal} if $\Cont_T(W)\le \Cont_T(U)$ for any word $U$ over $(X\cup T)^{\pm 1}$ such that $U=_GW$.  Finally, $W$ is \textit{cyclically $T$-minimal} if every cyclic shift of $W$ is $T$-minimal.
\end{defn}

\begin{ex}
    The standard presentation of the group $$\ZZ\ast \ZZ^2 =\langle a,s,t \mid [s,t]=1\rangle $$ is $\{t\}$-bilateral. It is easy to see from the normal form theorem for free products that the word $W= t^{-1}ats$ is $\{t\}$-minimal. However, it is not cyclically $\{t\}$-minimal; indeed, $atst^{-1}$ represents the same element of  $\ZZ\ast \ZZ^2$ as $as$ and $\Cont_{\{ t\}} (atst^{-1})=2 >0=\Cont_{\{ t\}}(as)$.
\end{ex}

In the proof of the next two lemmas, we will use the following notation and terminology. Let $\Delta $ be an annular van Kampen diagram over a $T$-bilateral presentation (\ref{Eq:bi}) and let $\Sigma$ be a $T$-arch with sides $u$ and $v$ sharing two $T$-edges $e$ and $f$ with $\partial_{ext}\Delta$. Adjusting the notation if necessary, we can assume that $\partial_{ext}\Delta = peqf^{-1}$, where $pv$ is a loop bounding a simply connected subdiagram $\Xi$ of $\Delta$ (see Fig. \ref{Fig4}). In this notation, we call $p$ (considered as a subset of $\partial \Delta$) the \textit{support} of $\Sigma$ and call $u$ (respectively, $v$) the \textit{external} (respectively, \textit{internal}) side of $\Sigma$. 

\begin{figure}[H]
    \centering
    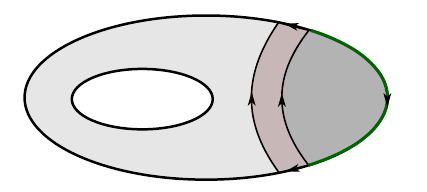
    \caption{The support of the $T$-arch $\Sigma$ is highlighted in green.}
    \label{Fig4}
\end{figure}

\begin{lem}\label{Lem:NoArches}
    Let $\Delta $ be a $T$-minimal annular diagram over a $T$-bilateral presentation (\ref{Eq:bi}) satisfying \hyperlink{AA}{($\ddag$)}. If the label of $\partial_{ext}\Delta$ (respectively,  $\partial_{int}\Delta$) is cyclically $T$-minimal, then $\Delta $ contains no $T$-arches sharing $T$-edges with $\partial_{ext}\Delta$ (respectively,  $\partial_{int}\Delta$). In particular, if both $\Lab(\partial_{ext}\Delta)$ and $\Lab(\partial_{int}\Delta)$ are cyclically $T$-minimal, then one of the following holds. 
\begin{enumerate}
    \item[(a)]  All $T$-bands in $\Delta $ are radial.
    \item[(b)] All $T$-bands in $\Delta $ are concentric $T$-annuli.
\end{enumerate}
\end{lem}
\begin{proof}
   Suppose first that $\Lab(\partial_{ext}\Delta)$ is cyclically $T$-minimal and $\Delta $ contains a $T$-arch $\Sigma$ sharing two $T$-edges with $\partial_{ext}\Delta $ (the proof for $\partial_{int}\Delta $ is similar). In the notation of Fig.~\ref{Fig4}, we have $\Lab(f^{-1}peu)=_G1$, and, therefore, $\Lab (f^{-1}peq)=_G\Lab(u^{-1}q)$. However,  $\Cont_T(\Lab(u^{-1}q))\le \Cont_T(\Lab(f^{-1}peq))-2$, which contradicts cyclic $T$-minimality of $\Lab(\partial_{ext}\Delta)$. Thus, $\Delta $ cannot contain $T$-arches sharing $T$-edges with $\partial_{ext}\Delta $. 

   If both $\Lab(\partial_{ext}\Delta)$ and $\Lab(\partial_{int}\Delta)$ are cyclically $T$-minimal, $\Delta $ cannot contain $T$-arches at all. By Lemma \ref{Lem:0-hom}, it also cannot contain $0$-homotopic $T$-annuli. Thus, every $T$-band in $\Delta $ is either radial or a concentric $T$-annulus. It remains to note that $\Delta $ cannot contain both by Lemma \ref{Lem:bands}.
\end{proof}

The usefulness of Lemma \ref{Lem:NoArches} stems from the following result, which reduces the study of conjugacy in groups given by bilateral presentations to considering elements represented by cyclically $T$-minimal words. Below, we denote by $g^G$ the conjugacy class of an element $g$ in $G$.

\begin{lem}\label{Lem:CTM}
    Let $G$ be a group given by a $T$-bilateral presentation (\ref{Eq:bi}) satisfying \hyperlink{AA}{($\ddag$)}. For any $g\in G$, there exists a cyclically $T$-minimal word $W$ over $(X\cup T)^{\pm 1}$ representing an element $g'\in g^G$ such that $\| W\| \le |g|_{X\cup T}$ and $c(g, g')\le |g|_{X\cup T}$.
\end{lem}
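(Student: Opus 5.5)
We argue by induction on $\|W\|$, or equivalently by a minimal-counterexample argument, with the reduction step supplied by a $T$-arch lemma of the type already proved for Proposition~\ref{Prop:embed}(b). Start with a geodesic word $W_0$ over $(X\cup T)^{\pm 1}$ representing $g$, so $\|W_0\|=|g|_{X\cup T}$. Among all pairs $(W,h)$ with
\begin{itemize*}
\item $W$ a word over $(X\cup T)^{\pm1}$,
\item $h\in G$ such that $W=_G h^{-1}gh$,
\item $\|W\|\le \|W_0\|$ and $|h|_{X\cup T}\le \|W_0\|$,
\end{itemize*}
choose one with $\Cont_T(W)$ minimal, and then $\|W\|$ minimal. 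The pair $(W_0,1)$ is admissible, so such a choice exists. We claim that this $W$ is cyclically $T$-minimal, which gives the lemma with $g'=h^{-1}gh$.

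The real content is a better bookkeeping of the conjugator. Suppose some cyclic shift $W_2W_1$ of $W\equiv W_1W_2$ is not $T$-minimal. Since cyclic shifts are conjugates by $W_1$, we can pass to $W_2W_1$ at conjugator cost $\|W_1\|$. The problem is to shorten the resulting word without losing control of $h$.

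Take a word $U=_G W_2W_1$ of smaller $T$-content, and a $T$-minimal disk diagram $\Delta$ with boundary label $W_2W_1U^{-1}$. Since $\Cont_T(U)<\Cont_T(W_2W_1)$, some $T$-band must join two $T$-edges of the path labeled $W_2W_1$.
\begin{itemize*}
\item If the same edge is traversed twice, this case is handled exactly as Case~1 of Proposition~\ref{Prop:embed}.
\item Otherwise there is a $T$-arch $\Sigma$. Choose it innermost, so that the segment $w_2$ between its $T$-edges $e$ and $f$ contains no $T$-edges.
\end{itemize*}
As in Case~2 of that proof, Lemma~\ref{Lem:0-hom} and \hyperlink{AA}{($\ddag$)} show that the subword $e\,w_2\,f^{-1}$ can be replaced by a word $V$ over $X^{\pm1}$ with $\|V\|\le\|w_2\|$. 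This replacement removes two $T$-letters and shortens the word by at least $2$. In cyclic terms, $W$ read cyclically contains $tYt'^{-1}$, with $Y$ free of $T$-letters, and $tYt'^{-1}$ equals in $G$ some $T$-free word $V$ with $\|V\|\le\|Y\|$.

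To replace this subword we must first cyclically shift it into the interior of the word. We will do this with the conjugator as small as possible: conjugate only by a proper prefix, namely the part of $W_1$ that is needed to move the cyclic subword $tYt'^{-1}$ so that it lies in one piece. The new word $W'$ then satisfies
\[
\|W'\|\le\|W\|-2, \qquad \Cont_T(W')=\Cont_T(W)-2 .
\]
The conjugator has increased by at most $\|W\|$.

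\textbf{Main obstacle.} Cyclic shifts accumulate, so the naive bound on the conjugator is a sum of lengths, and this sum is not bounded by $\|W_0\|$. We handle this as follows. Write the current word as $h^{-1}W_0h$ freely, with a running cyclic position. Every cyclic shift of a word obtained from $W_0$ by deletions and substitutions corresponds to conjugation by a prefix of a cyclic shift of $W_0$ itself. This is because each substitution replaces a subword $tYt'^{-1}$ by an equal word, and the letters that are not rewritten retain their positions in $W_0$.

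More precisely, we maintain the invariant that the current word equals in $G$ the word $S_2S_1$, where $W_0\equiv S_1S_2$ is a cyclic shift of $W_0$ and the part $S_2S_1$ has been rewritten only in place. The conjugator is then $S_1$, of length at most $\|W_0\|$. Shifting the current word corresponds to moving the cut point of $W_0$, so that the new conjugator is again a prefix of $W_0$, or of $W_0$ read cyclically, and its length stays below $\|W_0\|$. We need to check that a substituted block never has to be split by a cut. This holds because we may always cut at boundaries of the blocks: a rewritten block contains no $T$-letters, and the arch found next is innermost, so it lies between original $T$-letters.

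This contradicts the minimality of $\Cont_T(W)$ and proves the claim. Finally, $\|W\|\le\|W_0\|=|g|_{X\cup T}$ holds throughout, since every step shortens the word.
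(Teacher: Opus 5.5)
\textbf{The framing of your argument has a gap; the fix is already in your last paragraph.} Your reduction step is sound. For a non-$T$-minimal cyclic shift $W_2W_1$, you take a $T$-minimal disk diagram for $W_2W_1U^{-1}$. In it you find either a doubly traversed $T$-edge or an innermost $T$-arch with both ends on $W_2W_1$. You then replace the cyclic subword $tYt'^{-1}$ by a $T$-free word $V$ with $\|V\|\le\|Y\|$, exactly as in Proposition~\ref{Prop:embed}(b).

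The problem is how you close the argument. You choose $(W,h)$ extremal among \emph{all} pairs with $W=_Gh^{-1}gh$, $\|W\|\le\|W_0\|$ and $|h|_{X\cup T}\le\|W_0\|$. The contradiction then needs the improved pair $(W',h')$ to satisfy $|h'|_{X\cup T}\le\|W_0\|$. For an arbitrary admissible pair, nothing ties $h$ to $W_0$ or to where the arc sits in $W$. The conjugators your step produces are $hW_1$ or $hW_2^{-1}$. Their length is bounded only by $|h|_{X\cup T}+\min\{\|W_1\|,\|W_2\|\}$, which can exceed $\|W_0\|$.

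The invariant in your last paragraph is exactly what would bound $h'$: the current word is an in-place rewrite of a cyclic shift $S_2S_1$ of $W_0$, with conjugator $S_1$. But that invariant describes the history of an iteration, and an extremal pair has no history. So the sentence ``this contradicts the minimality of $\Cont_T(W)$'' is not justified as written. The claim that the extremal $W$ is cyclically $T$-minimal is in fact true, but only because the lemma itself supplies an admissible pair whose $T$-content is minimal over all of $g^G$. Your local improvement does not establish it.

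\textbf{How to repair it.} Build the invariant into the admissible class, or equivalently run the reduction as an explicit iteration from $W_0$; it terminates because each step shortens the word by at least $2$. Call $(W,h)$ admissible if $W_0\equiv S_1S_2$, $h=S_1$ in $G$, and $W$ arises from $S_2S_1$ by replacing pairwise disjoint subwords (blocks) by $G$-equal $T$-free words. Closure under your step goes as follows.
\begin{itemize}
\item Every $T$-letter of $W$ is an original letter of $W_0$, because blocks are $T$-free. This, not innermostness, is what matters.
\item Hence the arc $tYt'^{-1}$ consists of original letters and whole blocks.
\item Reading the cyclic word $W$ from just before $t$ splits no block. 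It gives an in-place rewrite of the cyclic shift $S_2'S_1'$ of $W_0$ that begins at that occurrence of $t$.
\item After the substitution, the new block absorbs the blocks inside $Y$, and the new conjugator is the prefix $S_1'$ of $W_0$, of length at most $\|W_0\|$.
\end{itemize}

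\textbf{Comparison with the paper.} With this change your argument proves the lemma by a different route. The paper fixes a word $V$ of minimal $T$-content over all of $g^G$. It takes a single $T$-minimal annular diagram between a geodesic $U$ for $g$ and $V$, and replaces the supports of all $\preceq$-maximal $T$-arches at once. Cyclic $T$-minimality of $W$ then comes for free from $\Cont_T(W)=\Cont_T(V)$. The conjugator bound comes from reading $\partial_{ext}\Delta$ from a suitable vertex, so no bookkeeping is needed. Your route uses only disk diagrams and the arch-shortening of Proposition~\ref{Prop:embed}(b). Its Case~1 also explicitly handles doubly traversed $T$-edges. The paper's count passes over that configuration by asserting that $\partial_{ext}\Delta$ is a simple loop whenever $U$ is geodesic, which fails, for example, for $U\equiv tat^{-1}$ in $F(a,t)$.
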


\begin{proof}
Fix any $g\in G$. Among all words over $(X\cup T)^{\pm 1}$ representing elements of $g^G$, we choose a word $V$ with minimal possible $\Cont_T(V)$. It is straightforward to verify that any such word $V$ is cyclically $T$-minimal. Further, we fix any geodesic word $U$ over $(X\cup T)^{\pm 1}$ representing $g$ in $G$ and let $\Delta $ be a $T$-minimal annular diagram over (\ref{Eq:bi}) such that $\partial _{ext}\Delta $ and $\partial _{int}\Delta $ are labeled by $U$ and $V$, respectively. 

Suppose first that $\Delta $ contains no $T$-arches. Since $U$ is geodesic, $\partial_{ext}\Delta$ is a simple loop. Therefore, every $T$-edge of $\partial_{ext}\Delta$ must either belong to $\partial_{int}\Delta$ or to a radial $T$-band; this implies the equality $\Cont_T(U)=\Cont_T(V)$. Hence, $U$ is cyclically $T$-minimal, and we can let $g'=g$ and $W\equiv U$.

Assume now that the set $\mathcal A$ of all $T$-arches in $\Delta$ is non-empty. Note that every $\Sigma \in \mathcal A$ must begin and end on $\partial_{ext}\Delta$ by Lemma~\ref{Lem:NoArches}. We define a partial order $\preceq$ on $\mathcal A$ by declaring $\Sigma \preceq \Sigma'$ if and only if the support of $\Sigma$ is contained in the support of $\Sigma'$. Let $\Sigma _1, \ldots, \Sigma_n$ be the list of all $\preceq$-maximal $T$-arches in $\Delta$ and let $u_1, \ldots , u_n$ be their external sides. Permuting the indices if necessary, we can assume that there exist subpaths $q_1, \ldots , q_n$ of $\partial \Delta$ such that $u_1q_1\ldots u_nq_n$ is a loop bounding an annular subdiagram $\Xi$ of $\Delta $ (see Fig.~\ref{Fig5}). 

\begin{figure}[H]
    \centering
    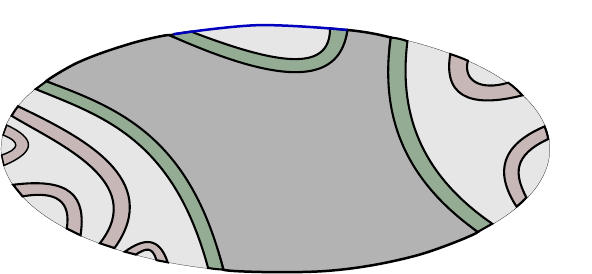
    \caption{$\preceq $-maximal $T$-arches of $\Delta$ are highlighted in green; the subpaths $r_1, \ldots, r_n$ are highlighted in blue.}
    \label{Fig5}
\end{figure}

Note that $\Xi$ cannot contain $T$-arches; indeed, by Lemma \ref{Lem:NoArches}, no $T$-arch of $\Xi$ can begin and end on $\partial_{int} \Xi = \partial_{int} \Delta$. Note also that the paths $u_1, \ldots, u_n$ contain no $T$-edges. Therefore, the $T$-edges shared by any $T$-arch of $\Xi$ and $\partial \Xi$ must belong to $q_i$ and $q_j$ for some $i,j\in \{ 1, \ldots, n\}$. Clearly, the existence of such a $T$-arch contradicts the choice of $\Sigma _1, \ldots, \Sigma_n$. As above, we conclude that every $T$-edge of $\partial_{ext}\Xi$ must either belong to $\partial_{int}\Delta$ or to a radial $T$-band in $\Xi$. Therefore, 
\begin{equation}\label{Eq:Cont}
\sum\limits_{i=1}^n\Cont_T(\Lab (q_i))=\Cont_T(V).    
\end{equation}

The label of $\partial \Xi=u_1q_1\ldots u_nq_n$ can be very long as there is no upper bound on the lengths of the paths $u_1, \ldots, u_n$. To obtain the desired word $W$, we modify $\Lab (u_1q_1\ldots u_nq_n)$ as follows. First, let $r_1, \ldots, r_n$ be the subpaths of $\partial \Delta$ such that $$\partial_{ext}\Delta = r_1q_1\ldots r_nq_n.$$ Since $\Lab(r_i)=_G\Lab(u_i)$ for all $i$, we have $|\Lab (u_i)|_{X\cup T}\le \ell (r_i)$. By part (b) of Proposition~\ref{Prop:embed}, there are words $U_i$ in $X^{\pm 1}$ representing the same elements of $G_0$ as $\Lab(u_i)$ such that $\| U_i\|=|\Lab (u_i)|_{X\cup T}\le \ell (r_i)$. Let 
$$
W\equiv U_1\Lab(q_1)\ldots U_n\Lab(q_n) 
$$
and let $g'$ be the element of $G$ represented by $W$. Clearly, $g'=_G\Lab(u_1q_1\ldots u_nq_n)$. Since $\Lab(u_1q_1\ldots u_nq_n)$ is equal to a cyclic shift of $U$ in $G$, we have $c(g,g')\le \| U\|=|g|_{X\cup T}$. Further, since all $U_i$ are words over $X^{\pm 1}$, we have $\Cont_T(W)=\Cont_T(V)$ by (\ref{Eq:Cont}). Hence, $W$ is cyclically $T$-minimal. It remains to note that
$$
\| W\| \le \sum\limits_{i=1}^n (\| U_i\| + \| \Lab(q_i)\|) \le \sum\limits_{i=1}^n (\ell (r_i) + \ell (q_i))= \ell (\partial_{ext}\Delta)= \| U\| = |g|_{X\cup T}. 
$$
\end{proof}

\section{Proof of Theorem \ref{Thm:main1}} \label{sec:proof of main1}

In this section, we construct a group with a prescribed conjugator length function, thereby proving the second direction of Theorem \ref{Thm:main1}.

Let $f\colon \NN\to \NN$ be an increasing function. Let $$\mathcal A=\{ x,y,z,r,s\}$$ and let 
$$
U_i=x^irx^i, \;\;\; V_i=y^isy^i\;\;\;, W_i=z^{f(i)}tz^{f(i)}
$$
for all $i\in \NN$. Let 

\begin{equation}\label{Eq:PresG main1}
    G=\Big\langle \mathcal A, t\left | \; W_i^{-1}U_iW_i=V_i\; (i\in \NN) \Big\rangle \right. .
\end{equation}

For each $i\in \NN$, consider the words: $$C_i=z^{-f(i)}x^irx^iz^{f(i)},\ D_i=z^{f(i)}y^isy^iz^{-f(i)},$$ 
and denote by $c_i,\ d_i$ the elements of $F(\calA)$ represented by $C_i,\ D_i$, respectively. Define $$C= \langle c_1,c_2,\dots\rangle\text{ and } D= \langle d_1,d_2,\dots \rangle .$$
\begin{lem}\label{lem:C free}
The subgroup $C$ is freely generated by $\{c_i : i\in\NN\}$, and the subgroup $D$ is freely generated by $\{d_i : i\in\NN\}$.
\end{lem}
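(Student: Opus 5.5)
The plan is to argue via Nielsen-type cancellation in the free group $F(\calA)$, using the letter $r$ (respectively $s$) as a marker. Each $C_i\equiv z^{-f(i)}x^irx^iz^{f(i)}$ is a reduced word containing exactly one occurrence of $r^{\pm1}$, and no other letter of $C_i$ is $r^{\pm 1}$. I will show that for any nontrivial reduced word $c_{i_1}^{\varepsilon_1}\cdots c_{i_k}^{\varepsilon_k}$ in the symbols $c_i$ (with $\varepsilon_j=\pm1$ and no adjacent pair $c_i^{\varepsilon}c_i^{-\varepsilon}$), the freely reduced form of the concatenation $C_{i_1}^{\varepsilon_1}\cdots C_{i_k}^{\varepsilon_k}$ still contains all $k$ occurrences of $r^{\pm1}$. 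In particular it is nonempty, which gives freeness of $C$ on $\{c_i\}$. The argument for $D$ is identical, with $(x,r,z)$ replaced by $(y,s,z^{-1})$.

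The key step is to control cancellation between adjacent factors. Write $C_i^{\varepsilon}=P_i^{\varepsilon}\, r^{\varepsilon}\, Q_i^{\varepsilon}$, where the halves adjacent to the junctions are $x^iz^{f(i)}$ or its inverse, depending on $\varepsilon$. At the junction between $C_i^{\varepsilon}$ and $C_j^{\delta}$, the letters meeting are a suffix of $C_i^{\varepsilon}$ after $r^{\varepsilon}$ and a prefix of $C_j^{\delta}$ before $r^{\delta}$. These are $x^{\pm i}z^{\pm f(i)}$ and $z^{\mp f(j)}x^{\pm j}$ (with appropriate signs), which are words in $x,z$ only.

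I split into cases. If the $z$-exponents do not cancel, reduction stops inside the $z$-block. If they do cancel, which forces $f(i)=f(j)$ and hence $i=j$ since $f$ is increasing, the $x$-blocks meet next. If these also cancel, we must have $i=j$ and opposite signs of the $x$-powers, which happens exactly when $\delta=-\varepsilon$, that is, a non-reduced pair $c_ic_i^{-1}$, which we excluded. In every allowed case, the cancellation at each junction consumes strictly less than the full half-word on either side. Thus the letter $r^{\pm1}$ of every factor survives. To make this rigorous, I will note that the cancellation at each junction is confined to the $\{x,z\}$-syllables between two consecutive $r$-letters. Since different junctions are separated by $r$-letters, which never cancel (an $r^{\varepsilon}$ could only cancel against another $r$-letter, which would require everything between them to vanish), the junction-wise analysis suffices.

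The main obstacle is the case $i=j$ with $\delta=\varepsilon$, for example $c_ic_i$. There the junction is $x^iz^{f(i)}\cdot z^{-f(i)}x^i$, where the $z$-parts cancel completely but then $x^ix^i=x^{2i}$ does not cancel. So $r$ survives, but I need to check the sign bookkeeping carefully. For $i\neq j$, I must verify that even if the $z$-exponents are opposite, $f(i)\ne f(j)$ leaves a nonzero $z$-power. This uses the fact that $f$ is increasing, hence injective. Once the junction lemma is established, freeness of both $C$ and $D$ follows immediately.
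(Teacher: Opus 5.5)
Your proposal is correct and is essentially the paper's argument: after expanding the product, each $\{x,z\}$-segment $x^{\varepsilon_k i_k}z^{f(i_k)-f(i_{k+1})}x^{\varepsilon_{k+1} i_{k+1}}$ between consecutive $r$-letters is nontrivial by injectivity of $f$ and reducedness, so the word is in normal form in $F(x,z)*\langle r\rangle$ (your ``every $r$-letter survives''). On the sign bookkeeping you flagged, note that $C_i^{\varepsilon}\equiv z^{-f(i)}x^{\varepsilon i}r^{\varepsilon}x^{\varepsilon i}z^{f(i)}$ letter for letter, so the half adjacent to a junction is $x^{\varepsilon i}z^{f(i)}$ rather than ``$x^iz^{f(i)}$ or its inverse''; consequently the $z$-block at every junction is $z^{f(i)}z^{-f(j)}$ regardless of the signs, which makes your case split immediate.
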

\begin{proof}
We prove the claim for $C$; the argument for $D$ is identical. Consider a nonempty reduced word
\begin{equation}\label{eq:c in C}
W=C_{i_1}^{\epsilon_1}C_{i_2}^{\epsilon_2}
\cdots C_{i_n}^{\epsilon_n}
\end{equation}
in the alphabet $\{C_i^{\pm1}:i\in\NN\}$, where
$\epsilon_k\in\{\pm1\}$ and $(i_{k+1},\epsilon_{k+1}) \neq(i_k,-\epsilon_k)$
for every $1\leq k<n$.

Expanding the factors gives
\begin{equation}\label{eq:W}
z^{-f(i_1)}x^{\epsilon_1i_1}r^{\epsilon_1}
\prod_{k=1}^{n-1}
\left(
x^{\epsilon_ki_k}
z^{f(i_k)-f(i_{k+1})}
x^{\epsilon_{k+1}i_{k+1}}
r^{\epsilon_{k+1}}
\right)
x^{\epsilon_ni_n}z^{f(i_n)}.
\end{equation}
For each $1\leq k<n$, the subword
\[
x^{\epsilon_ki_k}
z^{f(i_k)-f(i_{k+1})}
x^{\epsilon_{k+1}i_{k+1}}
\]
represents a non-trivial element of $F(x,z)$. Indeed, if
$i_k\neq i_{k+1}$, then, since $f$ is strictly increasing,
$f(i_k)\neq f(i_{k+1})$, so the word contains a nonzero power of $z$.
If $i_k=i_{k+1}$, then the word reduces to $x^{(\epsilon_k+\epsilon_{k+1})i_k}$; since $\epsilon_{k}=\epsilon_{k+1}$, this gives $x^{2\epsilon_k i_k}$.
Thus, after freely reducing the $F(x,z)$-subwords, \eqref{eq:W} is in
reduced normal form in $F(x,z)*\langle r\rangle=F(x,z,r)$.
In particular, it represents a non-trivial element. Hence
$\{c_i:i\in\NN\}$ freely generates $C$.
\end{proof}

\bigskip 

Returning to the study of $G$, note that the $i$-th relation from (\ref{Eq:PresG main1}) can be re-arranged as $t^{-1} C_i t = D_i$. Indeed:

    \begin{align*}
        \underbrace{z^{-f(i)} t^{-1} z^{-f(i)}}_{W_i^{-1}} \underbrace{x^irx^i}_{U_i} \underbrace{z^{f(i)} t z^{f(i)}}_{W_i} = \underbrace{y^isy^i}_{V_i} \\  
        t^{-1}\underbrace{z^{-f(i)}x^irx^iz^{f(i)}}_{C_i}t = \underbrace{z^{f(i)}y^isy^iz^{-f(i)}}_{D_i}
    \end{align*}
It follows that the presentation (\ref{Eq:PresG main1}) can be written as 
\begin{equation}\label{Eq:PresG main1 HNN}
G = %F(\mathcal{A}) *_\phi =
\Big\langle \mathcal A, t \left | \; t^{-1}C_it=D_i\; (i\in \NN) \Big\rangle \right. .
\end{equation}

Note that each relator from $\mathcal{R}=\{t^{-1}C_itD_i^{-1}:i\in \NN\}$ contains exactly two letters from $\{t\}^{\pm 1}$, and therefore we get:
\begin{prop}
    The presentation (\ref{Eq:PresG main1 HNN}) for $G$ is $\{t\}$-bilateral. Its associated base group is the group $G_0=F(\calA)$, the free group generated by $\calA$.
\end{prop}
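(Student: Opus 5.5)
This is essentially a direct check against Definition \ref{Defn:bi} and the definition of the base group; there are two things to verify.

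First, bilaterality with $T=\{t\}$. By the rewriting carried out just before the statement, each original relation $W_i^{-1}U_iW_i=V_i$ in (\ref{Eq:PresG main1}) is equivalent to $t^{-1}C_it=D_i$. Since $C_i$, $D_i$, $U_i$, $V_i$ are words over $\calA^{\pm1}$, we may take the relators of (\ref{Eq:PresG main1 HNN}) to be $R_i\equiv t^{-1}C_itD_i^{-1}$. The rewriting is obtained from $W_i^{-1}U_iW_iV_i^{-1}$ by conjugating by $z^{f(i)}$ and cyclically permuting, which is a Tietze-trivial change, so the two presentations define the same group $G$. Each $R_i$ contains exactly two letters from $\{t\}^{\pm1}$, namely one $t^{-1}$ and one $t$. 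Hence every relator contains either no or exactly two such letters, and the presentation is $\{t\}$-bilateral. Note also that the original relators already contain exactly two $t$-letters (one from $W_i^{-1}$ and one from $W_i$), so (\ref{Eq:PresG main1}) is itself bilateral.

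Second, the base group. By definition, $\mathcal R_0$ consists of the relators containing no letters from $T^{\pm1}$. Every relator $R_i$ contains $t^{\pm1}$, so $\mathcal R_0=\emptyset$. Therefore $G_0=\langle\calA\mid\emptyset\rangle=F(\calA)$.

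Neither step is a real obstacle. The only point needing care is to make explicit which presentation is being used: the relator set of (\ref{Eq:PresG main1 HNN}) is $\{t^{-1}C_itD_i^{-1}\}$, not some free reduction of it. Since $t$ separates $C_i$ from $D_i$, no cancellation can remove a $t$-letter, so the count of exactly two $t$-letters holds for the relators as written.
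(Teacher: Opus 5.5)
Your proposal is correct and matches the paper's argument: the paper just observes that each relator $t^{-1}C_itD_i^{-1}$ contains exactly two letters from $\{t\}^{\pm1}$, so the presentation is $\{t\}$-bilateral. Since no relator is $t$-free, $\mathcal R_0=\emptyset$ and $G_0=F(\calA)$. Your extra check that $t^{-1}C_itD_i^{-1}$ is obtained from $W_i^{-1}U_iW_iV_i^{-1}$ by conjugating by $z^{f(i)}$ is correct and makes explicit what the paper's displayed computation only indicates.
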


Our next goal is to show that presentation (\ref{Eq:PresG main1 HNN}) satisfies \hyperlink{A}{($\dag$)} and \hyperlink{A}{($\ddag$)}.

Let $\mathcal{A}^*$ denote the set of all (not necessarily reduced) words over $\mathcal{A}^{\pm 1}$. Define a map $\phi : \mathcal{A}^* \to \mathcal{A}^*$ by interchanging $x \leftrightarrow y$, $r \leftrightarrow s$, and $z \leftrightarrow z^{-1}$, and extending it letter-by-letter to arbitrary words. The map $\phi$ is an isometry with respect to the natural word metric on $\mathcal{A}^*$, and we have that $\phi(C_i)=D_i$ for all $i\in \NN$. 

Since $\phi(w)^{-1}=\phi(w^{-1})$ for any letter $w\in \calA^{\pm 1}$, the map $\phi$ induces a well-defined automorphism, still denoted by $\phi$, of the free group $F(\mathcal{A})$, which is also an isometry with respect to the natural word metric $|\cdot|_{\mathcal{A}}$ on $F(\calA)$.

%Note that the subgroup $C$ (respectively, $D$) is generated freely by $\{c_i: i\in \NN\}$ (respectively, $\{d_i: i\in \NN\}$), and, moreover, that $C\cap D=\{1\}$.
The automorphism $\phi$ of $F(\calA)$ restricts to an isometric isomorphism $\phi:C \to D$, which satisfies $\phi(c_i) = d_i$ for all $i \in \mathbb{N}$. It follows, in particular, that $G$ is the HNN extension of $F(\calA)$ relative to $\phi$, see Example \ref{Ex:HNN}.

Observe that the side labels of each $\{t\}$-face in a van Kampen diagram $\Delta$ over (\ref{Eq:PresG main1 HNN}) are $C_i$ and $\phi(C_i)$ for some $i\in \NN$. %; we call $i$ \emph{the rank of $\pi$}. 
It follows that the side labels of each $\{t\}$-band in $\Delta$ are $W$ and $\phi(W)$ for some word $W$ representing an element in the subgroup $C$. That $\phi$ is an isometry with respect to $|\cdot|_{\calA}$ implies that $|W|_\calA=|\phi(W)|_\calA$. We obtain:

\begin{lem}\label{lem:PresG satis ddag}
    The presentation (\ref{Eq:PresG main1 HNN}) satisfies \hyperlink{A}{($\dag$)} and \hyperlink{A}{($\ddag$)}.
\end{lem}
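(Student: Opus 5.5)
By ($\ddag$) $\Rightarrow$ ($\dag$), noted after the two conditions, it suffices to verify ($\ddag$). So I take an arbitrary $\{t\}$-band $\Sigma$ over (\ref{Eq:PresG main1 HNN}), with faces $\pi_1,\dots,\pi_n$ and sides $u=u_1\cdots u_n$, $v=v_1\cdots v_n$ as in Definition~\ref{def:t-bands}, and show that $|\Lab(u)|_{\calA}=|\Lab(v)|_{\calA}$.

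\textbf{Step 1: one face.} Each face $\pi_k$ has boundary label a cyclic shift of $(t^{-1}C_{i}tD_{i}^{-1})^{\pm 1}$ for some $i=i_k$. Reading $\partial\pi_k=e_{k-1}u_ke_k^{-1}v_k^{-1}$, the two $t$-edges split this label into the two $t$-free pieces $C_i^{\pm1}$ and $D_i^{\pm1}$. The orientation of the $t$-edges fixes which piece lies on which side. After a global convention (swapping the names $u$ and $v$ if needed), all faces of the band carry $C$-pieces on one side. This is the point I expect to need the most care. Two consecutive faces share the edge $e_k$, labelled $t$ on both of them. The orientation of that shared $t$-edge, together with the clockwise convention, should force the $C$-pieces of $\pi_k$ and $\pi_{k+1}$ onto the same side. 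So I expect $\Lab(u_k)\equiv C_{i_k}^{\epsilon_k}$ and $\Lab(v_k)\equiv D_{i_k}^{\epsilon_k}$ with the same sign $\epsilon_k$ on both sides. This sign agreement is exactly what the relation $t^{-1}C_it=D_i$ encodes.

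\textbf{Step 2: apply $\phi$.} The map $\phi$ acts letter by letter and satisfies $\phi(w^{-1})=\phi(w)^{-1}$, so $\phi(C_i^{\epsilon})\equiv D_i^{\epsilon}$ as words. By Step~1 this gives $\Lab(v)\equiv\phi(\Lab(u))$ as words. In particular $\Lab(u)$ represents an element of $C$.

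\textbf{Step 3: conclude.} The induced automorphism $\phi$ of $F(\calA)$ permutes $\calA^{\pm1}$, so it preserves the word norm $|\cdot|_{\calA}$. Indeed, it maps geodesic (freely reduced) words to freely reduced words of the same length, and $\phi^2=\mathrm{id}$. Hence
\[
|\Lab(v)|_{\calA}=|\phi(\Lab(u))|_{\calA}=|\Lab(u)|_{\calA},
\]
which is ($\ddag$) for the base group $G_0=F(\calA)$, and therefore ($\dag$) as well. Alternatively, ($\dag$) follows directly from Step~2: since $\phi$ is injective, $\Lab(u)=1$ in $F(\calA)$ if and only if $\phi(\Lab(u))=1$.
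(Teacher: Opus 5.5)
Your proposal is correct and follows the paper's own argument: each $t$-face has side labels $C_i$ and $\phi(C_i)$, so a $t$-band has side labels $W$ and $\phi(W)$, and $(\ddag)$ (hence $(\dag)$) follows because $\phi$ is an isometric automorphism of $F(\calA)$. The orientation point you hedge on in Step~1 does hold: matching $\partial\pi_k=e_{k-1}u_ke_k^{-1}v_k^{-1}$ against a cyclic shift of $(t^{-1}C_itD_i^{-1})^{\pm1}$ forces $\Lab(e_{k-1})=\Lab(e_k)$, makes $u_k$ the $C$-piece exactly when this common label is $t^{-1}$, and gives equal exponents on the two sides, so the side assignment propagates along the band through the shared $t$-edges.
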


By Proposition \ref{Prop:embed}, property \hyperlink{A}{($\ddag$)} implies that the natural homomorphism $\iota \colon G_0 \to G$ is an isometric embedding. We may therefore identify $G_0 = F(\mathcal{A})$ with its image in $G$ and regard it as a subgroup of $G$. In particular, we will view the subgroups $C$ and $D$ of $F(\mathcal{A})$, as well as their elements, as subgroups and elements of $G$. 

For the remainder of this section, we study the group $G$ defined in (\ref{Eq:PresG main1 HNN}). In particular, the notation $|\cdot|$ denotes the word metric on $G$ with respect to the generating set $\calA\cup \{t\}$.

We now study van Kampen diagrams over the presentation (\ref{Eq:PresG main1 HNN}) for $G$. Recall (Convention \ref{con:orientation}) that unless indicated otherwise, the labels of boundaries of faces and boundary components of diagrams are read clockwise.

Since $T=\{t\}$ is a singleton, we shall omit braces and write $t$-edge, $t$-face, $t$-band, $t$-annulus, and $t$-arch instead of $\{t\}$-edge, $\{t\}$-face, etc. The same convention applies to $t$-minimal diagrams, $t$-conjugate words, and $t$-minimal words.

\begin{defn}
Let $\pi$ be a $t$-face in a van Kampen diagram over (\ref{Eq:PresG main1 HNN}). By the form of the defining relators, there is a unique $i\in \NN$ such that the side labels of $\pi$ are $C_i$ and $D_i$, or $C_i^{-1}$ and $D_i^{-1}$. We call $i$ \emph{the rank of $\pi$}.
\end{defn}

Let $\gamma \in \mathcal{A}$. An edge labeled by either $\gamma$ or $\gamma^{-1}$ will be called a \emph{$\gamma$-edge}. 
The next lemma provides useful observations about $t$-faces sharing an $r$- or an $s$-edge. 
\begin{lem}\label{lem:shared r edges}
Let $\Delta$ be a $t$-minimal diagram over (\ref{Eq:PresG main1 HNN}). Let $\pi, \pi'$ be a pair of $t$-faces in $\Delta$ sharing an $r$-edge or an $s$-edge. Then, $\pi$ and $\pi'$ have different ranks.
% and moreover, the maximal common sub-path of $\partial\pi_i$ and $\partial \pi_j$ has length $2\min\{i,j\}+1$.
\end{lem}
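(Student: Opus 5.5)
Suppose $\pi$ and $\pi'$ share an $r$-edge $e$ (the $s$-edge case is symmetric, with $D_i$ in place of $C_i$) and, arguing by contradiction, have the same rank $i$. We will show that they can be cancelled, which contradicts $t$-minimality of $\Delta$.

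First we locate $e$ in both faces. The letter $r$ occurs exactly once in the relator $t^{-1}C_itD_i^{-1}$, namely in the middle of $C_i$. Hence $e$ is the middle edge of the $C_i$-side of $\pi$ and also the middle edge of the $C_i$-side of $\pi'$. Since $\Delta$ is planar and $e$ is traversed in opposite directions by $\partial\pi$ and $\partial\pi'$, the labels read from $e$ around $\pi$ and around $\pi'$ are mutually inverse. Both faces have the same relator $R_i=t^{-1}C_itD_i^{-1}$ (up to inversion and cyclic shift), and the position of $r$ in $R_i$ is unique. Therefore reading $\partial\pi$ from $e$ gives exactly the inverse of reading $\partial\pi'$ from $e$. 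In other words, $\pi$ and $\pi'$ are mirror images across $e$.

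Next we apply the standard cancellation of such a pair. We cut out $\pi\cup\pi'$ and glue together the two remaining boundary paths $\partial\pi\setminus e$ and $\partial\pi'\setminus e$, whose labels coincide. This is the usual reduction of a reducible pair of faces (cf. \cite[Ch.~V]{LS}). If the two faces share more of their boundaries, the gluing may create a spherical or degenerate piece, which we handle as usual. The result is a van Kampen diagram of the same homotopy type and with the same boundary labels, but with two fewer $t$-faces. This contradicts $t$-minimality.

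The main obstacle is making the cancellation rigorous in the annular setting, where the boundaries of $\pi$ and $\pi'$ might overlap elsewhere or the gluing might change the topology. A cleaner route is to work with the $t$-bands instead. The two $t$-edges of $\pi$ and of $\pi'$ have mirror positions relative to $e$, so $\pi$ and $\pi'$ lie in a common maximal $t$-band. To see this, note that the subpaths from $e$ to the $t$-edges are $x^iz^{f(i)}$ on both sides. Following them, we obtain a loop of label $x^iz^{f(i)}(x^iz^{f(i)})^{-1}$, which is freely trivial. This suggests replacing the subband between them, or the disk it bounds, by a $t$-free filling as in the proof of Lemma~\ref{Lem:0-hom}, using \hyperlink{A}{($\dag$)}. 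Either way the number of $t$-faces decreases, contradicting $t$-minimality.
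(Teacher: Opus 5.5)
Your first argument is exactly the paper's proof. Because $r$ occurs once in $t^{-1}C_itD_i^{-1}$, writing $\partial\pi=e_rq$ and $\partial\pi'=e_r^{-1}l$ gives $\Lab(q)\equiv\Lab(l^{-1})$. The pair can therefore be excised and the hole patched by identifying $q$ with $l^{-1}$, which removes two $t$-faces and contradicts $t$-minimality. The paper handles possible overlaps of the two boundaries no more explicitly than you do; it just patches ``in the obvious way''.

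You should drop the band-based ``cleaner route'', though. The two paths labeled $x^iz^{f(i)}$ that leave the same endpoint of $e$, one inside $\pi$ and one inside $\pi'$, need not end at a common vertex. So they do not form a loop, and nothing forces $\pi$ and $\pi'$ into a common $t$-band. That alternative is unjustified, and it is also unnecessary.
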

\begin{proof}
    Assume $\pi$ and $\pi'$ share an $r$-edge, to be denoted by $e_r$. The case where they share an $s$-edge is completely analogous, so we omit its proof. 
    Reading the boundaries of $\pi,\pi'$ clockwise, we can write $\partial\pi=e_r q$ and $\partial\pi'=e_r^{-1}l$. Let $\Xi$ be the disk subdiagram bounded by the loop $ql$.
   
    If $\pi$ and $\pi'$ have the same rank, then $\Lab(q)\equiv \Lab(l^{-1})$. It follows that we can remove $\Xi$ from $\Delta$ and patch the resulting hole by identifying the subpaths $q$ and $l^{-1}$ in the obvious way. This operation does not affect the homotopy type and the boundary label of $\Delta$; however, it reduces the number of $t$-faces by $2$. This contradicts the $t$-minimality assumption on $\Delta$, completing the proof.
\end{proof}

Let $\Sigma$ be a $t$-band in a van Kampen diagram over (\ref{Eq:PresG main1 HNN}). Recall that the labels of its two sides are: $$C_{i_1}^{\pm1}\dots C_{i_n}^{\pm1} ~~~\text{and}~~~ \phi(C_{i_1}^{\pm1}\dots C_{i_n}^{\pm1})=D_{i_1}^{\pm1}\dots D_{i_n}^{\pm1}$$ for some $i_1,\dots,i_n\in \NN$. Consequently, one side of $\Sigma$ consists entirely of $x$-, $z$-, and $r$-edges, while the other consists entirely of $y$-, $z$-, and $s$-edges. We call the former the \emph{$C$-side}
of $\Sigma$ and the latter the \emph{$D$-side}.

We have:
\begin{lem}\label{lem: r edge cases}
Let $\Sigma$ be a radial $t$-band in a $t$-minimal diagram $\Delta$ over (\ref{Eq:PresG main1 HNN}). Then any $r$-edge in $\Sigma$ either belongs to $\partial\Delta$, or is shared by a maximal $t$-band different than $\Sigma$.
\end{lem}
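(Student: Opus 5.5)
Fix an $r$-edge $e$ lying in $\Sigma$. Since $r$-edges occur only on the $C$-side of a $t$-band and $t$-faces carry no interior $r$-edges, $e$ lies on the boundary of a unique $t$-face $\pi$ of $\Sigma$, on its $C$-side. Suppose $e\notin\partial\Delta$. Then $e$ is also on the boundary of another face $\pi'$ of $\Delta$. Every relator of the presentation (\ref{Eq:PresG main1 HNN}) contains $t$; the base group is free with no relators. Hence every face of $\Delta$ is a $t$-face, and $\pi'$ is a $t$-face containing $e$.

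Let $\Sigma'$ be the maximal $t$-band containing $\pi'$. The plan is to show $\Sigma'\neq\Sigma$; then $e$ is shared by the distinct maximal band $\Sigma'$, as claimed. By Lemma \ref{Lem:bands}, distinct maximal bands are disjoint, so the only danger is that $\pi'$ is itself a face of $\Sigma$, say $\pi=\pi_j$ and $\pi'=\pi_k$ with $j\ne k$. Since $e$ lies on the $C$-side of $\pi'$ as well, the $C$-side of $\Sigma$ would pass through $e$ twice, once in each direction. We aim to exclude this using planarity, $t$-minimality, and the fact that $\Sigma$ is radial.

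Consider the loop formed by the $C$-side of $\Sigma$ between the two occurrences of $e$ together with the $t$-edges $e_j,\dots,e_{k-1}$ of $\Sigma$. This closed path bounds a region of the plane lying between $\pi_j$ and $\pi_k$, which are glued along $e$. The band between them, $\pi_{j+1},\dots,\pi_{k-1}$, together with $\pi_j,\pi_k$ glued along $e$, encloses a subdiagram. The band cannot continue past both ends while staying planar unless this portion closes into a loop around part of the diagram.

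The main obstacle is making this topological step rigorous. Because $\Sigma$ is radial, its ends lie on different boundary components, and the segment $\pi_j,\dots,\pi_k$ plus the gluing at $e$ would form a closed curve crossing $\Sigma$'s own continuation. I would treat two cases according to whether this curve is null-homotopic in the annulus or separates the two boundary components.

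In the null-homotopic case, the enclosed disk subdiagram is bounded by a label with no $t$-letters. Arguing as in Lemma \ref{Lem:0-hom} via property ($\dag$), the faces $\pi_j,\dots,\pi_k$ can be replaced so as to reduce the number of $t$-faces. Alternatively, when $|j-k|=1$, the faces $\pi_j,\pi_{j+1}$ share both a $t$-edge and the $r$-edge $e$, so they have equal rank and cancel as in Lemma \ref{lem:shared r edges}. Either way this contradicts $t$-minimality.

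In the separating case, one endpoint edge of $\Sigma$ lies inside the curve and the other outside. The band would then have to cross itself to exit, and it cannot do so because its faces are pairwise distinct. Hence $\pi'$ is not in $\Sigma$, and $\Sigma'\ne\Sigma$.
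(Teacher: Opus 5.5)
Your reduction to the case $\pi'=\pi_k\in\Sigma$, and your use of radiality to discard the configuration where the closed-up segment surrounds the hole, match the paper. The gap is in the null-homotopic case, which is where the contradiction has to come from. Lemma~\ref{Lem:0-hom} and ($\dag$) are about a $t$-annulus, i.e.\ a band closed up along a $t$-edge, whose two sides are $t$-conjugate. Here $\pi_j,\dots,\pi_k$ are instead closed up along an $r$-edge of the $C$-side. The disk bounded by the $C$-side loop $u$ between the two occurrences of $e$ tells you only that it has no $t$-faces, hence no faces at all, so $\Lab(u)$ freely reduces to the empty word. The $C$-side label of the segment is then $A\,r^{\pm1}\Lab(u)\,r^{\mp1}B=_{F(\calA)}AB$, where $A,B$ are the parts of $C_{i_j}^{\pm1},C_{i_k}^{\pm1}$ outside their $r$-letters. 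Property ($\dag$) says nothing about whether $AB$ is trivial, so no replacement that lowers the number of $t$-faces follows from what you wrote. In fact nothing in that step distinguishes $r$ from $z$, and the analogous claim for $z$-edges is false: two consecutive faces of a band with side label $C_iC_i$ may share the $z$-edges of the subword $z^{f(i)}z^{-f(i)}$ in a $t$-minimal diagram. Your remark on $|j-k|=1$ points to the right endgame, but ``so they have equal rank'' is asserted rather than proved, and you never reduce the general case to it.

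The missing idea, as in the paper, is an innermost-pair reduction. The disk bounded by $u$ has no faces, so it is a tree, and $u$ traverses every $r$-edge on it in both directions. Passing to an innermost such pair, you may assume $u$ contains no $r$-edges. Since every $t$-face contributes exactly one $r$-edge to the $C$-side, $\pi_j$ and $\pi_k$ are then consecutive. Now $\Lab(u)$ is freely equal to $x^{\pm i}z^{f(i)-f(i')}x^{\pm i'}$, where $i,i'$ are the two ranks. Since $\Lab(u)$ is freely trivial, injectivity of $f$ gives $i=i'$, contradicting Lemma~\ref{lem:shared r edges}. Alternatively, you can close your general case algebraically. $t$-minimality makes $C_{i_j}^{\epsilon_j}\cdots C_{i_k}^{\epsilon_k}$ reduced over $\{C_i^{\pm1}\}$, so by Claim~\ref{claim:A blocks survive} its free reduction contains $k-j+1\ge 2$ letters $r^{\pm1}$. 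But it is freely equal to $AB$, which contains none.
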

\begin{proof}
Suppose not. Then there exist faces $\pi,\pi'$ in $\Sigma$ and an $r$-edge $e_r$, such that $e_r$ occurs on $\partial\pi$, and $e_r^{-1}$ on $\partial\pi'$. By the form of defining relators, $\Lab(\partial \pi)$ cannot contain both $r$ and $r^{-1}$; consequently, $\pi\neq \pi'$. 
It follows that the $C$-side of $\Sigma$ contains a subpath of the form $e_r u e_r^{-1}$. Up to maybe replacing $u$ by a subpath of $u$, we may assume $u$ contains no $r$-edges. Therefore, $\pi$ and $\pi'$ are consecutive $t$-faces in $\Sigma$, that is, they share a $t$-edge, $e_t$.

Reading both boundaries clockwise, we can write $\partial\pi=e_r q_1 e_t q_2$ and $\partial\pi'=l_2 e_t^{-1}l_1 e_r^{-1}$. Since $\partial\pi$ contains exactly two $t$-edges, we have that one of $q_1,q_2$ does not contain a $t$-edge, call it $q_i$, $i\in\{1,2\}$. Since $\Sigma$ is radial, the subdiagram $\Xi$ bounded by $q_{i}l_i$ is a disk subdiagram. Since $q_il_i$ contains at most one $t$-edge (namely, the one possibly contributed by $l_i$), $\Xi$ does not contain any $t$-bands. Indeed, a maximal $t$-band in $\Xi$ must be a $0$-homotopic $t$-annulus, which is impossible, by the $t$-minimality assumption and Lemma \ref{Lem:0-hom}.
In particular, we have that  $\Lab(q_il_i)=_{F(\calA)}1$, from which it follows that $\pi$ and $\pi'$ have the same rank, contradicting Lemma \ref{lem:shared r edges}.
\end{proof}

Recall that $C$ is freely generated by $\{C_i:i\in \NN\}$. Consider a reduced word in the alphabet
$\{C_i^{\pm1}:i\in\mathbb{N}\}$:
\begin{equation}\label{eq:W over C}
W=C_{i_1}^{\epsilon_1} C_{i_2}^{\epsilon_2} \cdots C_{i_n}^{\epsilon_n},
\end{equation}
where $\epsilon_k\in\{\pm1\}$ and 
$(i_{k+1},\epsilon_{k+1})
\neq (i_k,-\epsilon_k)$
for every $1\leq k<n$. Denote by $A_i=x^irx^i$.

\begin{claim}\label{claim:A blocks survive}
Let $W$ be as in (\ref{eq:W over C}). Expanding the $C$-syllables in $W$, freely reducing over $F(x,z,r)$, no letter belonging to any block $A_i^{\epsilon_i}=(x^irx^i)^{\epsilon_i}$ is canceled.

Moreover, if $W$ is cyclically reduced as a word in $\{C_i^{\pm1}:i\in \NN\}$, then the same holds also if cyclically reducing over $F(x,z,r)$.
\end{claim}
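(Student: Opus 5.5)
We argue directly from the expansion \eqref{eq:W} in the proof of Lemma~\ref{lem:C free}, treating the free group $F(x,z,r)$ as the free product $F(x,z)\ast\langle r\rangle$. Expanding $W$ gives a word in which the letters $r^{\epsilon_k}$ appear exactly once per syllable. Between consecutive occurrences $r^{\epsilon_k}$ and $r^{\epsilon_{k+1}}$ sits the $F(x,z)$-word
\[
M_k\equiv x^{\epsilon_k i_k} z^{f(i_k)-f(i_{k+1})} x^{\epsilon_{k+1} i_{k+1}}.
\]
Before the first $r$ sits $P\equiv z^{-f(i_1)}x^{\epsilon_1 i_1}$, and after the last sits $S\equiv x^{\epsilon_n i_n}z^{f(i_n)}$. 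Free reduction in a free product acts independently inside each maximal $F(x,z)$-segment, provided no segment becomes trivial, because only then could two $r$-letters meet. Lemma~\ref{lem:C free} already shows that each $M_k$ is nontrivial. So the $r$-letters all survive, and the whole reduction consists of freely reducing each $M_k$, $P$, $S$ separately.

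It therefore suffices to show that free reduction of each segment deletes no $x$-letters. For $P$ and $S$ this is clear, since they are already reduced: they are $z$-powers and $x$-powers with $i_1,i_n\ge 1$.

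For $M_k$ we split into two cases.
\begin{itemize*}
\item If $i_k\ne i_{k+1}$, then $f(i_k)\ne f(i_{k+1})$ because $f$ is strictly increasing. The middle $z$-power is then nonempty, so $M_k$ is already freely reduced and its $x$-letters remain.
\item If $i_k=i_{k+1}$, reducedness of $W$ forces $\epsilon_k=\epsilon_{k+1}$. Then $M_k$ reduces to $x^{\epsilon_k i_k}x^{\epsilon_k i_k}$, which involves no cancellation either: the two $x$-powers have the same sign and merely concatenate.
\end{itemize*}
In both cases every letter of each block $(x^{i}rx^{i})^{\epsilon}$ is present in the reduced word, with the only deleted letters being $z$'s. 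This proves the first assertion.

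For the cyclic statement, assume $W$ is cyclically reduced in the $C_i^{\pm1}$, so that also $(i_1,\epsilon_1)\ne(i_n,-\epsilon_n)$. Cyclic reduction of the expanded word amounts to reducing the cyclic word in which $S$ and $P$ merge into the single segment
\[
SP\equiv x^{\epsilon_n i_n} z^{f(i_n)-f(i_1)} x^{\epsilon_1 i_1}.
\]
This segment has exactly the form of $M_k$ with the pair $(n,1)$ in place of $(k,k+1)$. The same two-case analysis applies, so $SP$ is nontrivial and no $x$-letter is cancelled in it. The cyclic word thus remains a cyclically reduced free-product word with the same $r$-letters and all $x$-letters of the blocks intact.

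The only delicate point is justifying that free reduction in $F(x,z,r)$ is local to the $F(x,z)$-segments. For this we would invoke the normal form theorem for free products, cyclic version included: a sequence of nontrivial alternating factors is reduced, and it is cyclically reduced if in addition the first and last factors lie in different free factors, or after merging them into a nontrivial element. Beyond this, everything is the bookkeeping above.
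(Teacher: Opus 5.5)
Your proof is correct and follows essentially the paper's argument. You use the same case split at each junction $x^{\epsilon_k i_k}z^{f(i_k)-f(i_{k+1})}x^{\epsilon_{k+1}i_{k+1}}$: injectivity of $f$ together with reducedness of $W$ forces $\epsilon_k=\epsilon_{k+1}$ when the $z$-power vanishes, and the wrap-around junction $x^{\epsilon_n i_n}z^{f(i_n)-f(i_1)}x^{\epsilon_1 i_1}$ is handled the same way for the cyclic statement. Your appeal to the normal form in $F(x,z)\ast\langle r\rangle$ only makes explicit why cancellation cannot spread past these junctions, which the paper leaves implicit.
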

\begin{proof}
Two consecutive blocks occur in the form $$A_{i_k}^{\epsilon_k}
z^{f(i_k)-f(i_{k+1})}
A_{i_{k+1}}^{\epsilon_{k+1}}.$$
If $f(i_k)\neq f(i_{k+1})$, the middle power of $z$ is non-trivial, so the two blocks are separated. Otherwise $f(i_k)= f(i_{k+1})$. Since $f$ is injective, $i_k=i_{k+1}:=j$, which implies, since (\ref{eq:W over C}) is reduced, that $\epsilon_k=\epsilon_{k+1}=:\epsilon_j$. In this case, $A_{i_k}^{\epsilon_k}
z^{f(i_k)-f(i_{k+1})}
A_{i_{k+1}}^{\epsilon_{k+1}}=A^{\epsilon_j}_{j}A^{\epsilon_j}_{j}$. 

\textcolor{teal}
Assume moreover that $W$ is cyclically reduced. The only additional cancellation that may occur when cyclically reducing is between the last and first $C$-syllables. The corresponding $A$-blocks occur cyclically in the 
form $A_{i_n}^{\epsilon_n} z^{f(i_n)-f(i_1)}A_{i_1}^{\epsilon_1}$.
If the middle power of $z$ is non-trivial, the two $A$-blocks are separated. Otherwise, injectivity of $f$ implies $i_n=i_1$. Since $W$ is cyclically reduced as a word in $\{C_i^{\pm1}:i\in\NN\}$ we must then have $\epsilon_n=\epsilon_1$. Thus cyclic reduction introduces no cancellation in any $A$-block, and the moreover statement follows.
\end{proof}

We will repeatedly use the following observation. Suppose $U\equiv U_1rU_2$ is a word over $\calA^{\pm1}$, and the letter $r$ survives the free reduction of $U$, then $|U|_{F(\calA)}=|U_1|_{F(\calA)}+1+|U_2|_{F(\calA)}$. Since the embedding $F(\calA)=G_0\to G$ is isometric, the same equality holds when the norm is taken in $G$: $|U|=|U_1|+1+|U_2|$.

As a preliminary step toward bounding $\CL_G$, we study the word-metric distance between the boundary components of annular diagrams over (\ref{Eq:PresG main1 HNN}). A key ingredient is the following technical lemma, which relies on the structure of words in $C$.

\begin{lem}\label{lem:r-edge split}
    Suppose a word $W$ over $\calA^{\pm 1}$ represents an element in $C$, and let $l\in \NN$. Then: $$|Wz^{-f(l)}x^{\pm l}|\geq f(l)+l.$$
\end{lem}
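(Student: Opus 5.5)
We will argue entirely inside the free group. Because the embedding $F(\calA)=G_0\to G$ is isometric (Proposition~\ref{Prop:embed}(b) together with Lemma~\ref{lem:PresG satis ddag}), it is enough to show that the freely reduced form of $Wz^{-f(l)}x^{\pm l}$ has length at least $f(l)+l$. Let $c\in C$ be the element represented by $W$.

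\textbf{Trivial case.} If $c=1$, the word $z^{-f(l)}x^{\pm l}$ is already reduced and has length exactly $f(l)+l$, so we are done.

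\textbf{Normal form of $c$.} Otherwise, by Lemma~\ref{lem:C free}, $c$ is given by a unique nonempty reduced word $C_{i_1}^{\epsilon_1}\cdots C_{i_n}^{\epsilon_n}$. Expanding this word as in (\ref{eq:W}) and applying Claim~\ref{claim:A blocks survive}, the free reduction of $c$ has the form
$$
P\,x^{\delta i_n}r^{\epsilon_n}x^{\delta i_n}z^{f(i_n)},
$$
where $\delta=\epsilon_n$ and the last block $A_{i_n}^{\epsilon_n}$ survives intact. The key bookkeeping fact concerns the $z$-exponent sum. Each $C_i$ has $z$-exponent sum $0$, so $c$ does too. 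Hence the reduced prefix $P$ has $z$-exponent sum $-f(i_n)$ and therefore contains at least $f(i_n)$ letters.

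\textbf{Case $f(i_n)\neq f(l)$.} Here the power $z^{f(i_n)-f(l)}$ is nontrivial, so no cancellation reaches across it and the word
$$
P\,x^{\delta i_n}r^{\epsilon_n}x^{\delta i_n}z^{f(i_n)-f(l)}x^{\pm l}
$$
is reduced. Its length is at least
$$
f(i_n)+2i_n+1+|f(i_n)-f(l)|+l\;\ge\; f(l)+l .
$$

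\textbf{Case $f(i_n)=f(l)$.} By injectivity of $f$ this forces $i_n=l$, and the word becomes $P\,x^{\delta l}r^{\epsilon_n}x^{\delta l}x^{\pm l}$. At worst the trailing $x$-powers cancel completely, but the letter $r^{\epsilon_n}$ survives. The remaining reduced word $P\,x^{\delta l}r^{\epsilon_n}$ has length at least $|P|+l+1\ge f(l)+l+1$.

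The main obstacle is making the structure of the reduced form of $c$ rigorous. Specifically, one must check that the final $A$-block really appears intact immediately before $z^{f(i_n)}$, and that everything before it forms a reduced prefix $P$ with the claimed $z$-exponent sum. This is exactly what Claim~\ref{claim:A blocks survive} supplies: only $z$-powers and adjacent $x$-powers between blocks can merge, and the $z$-count bound for $P$ follows since a reduced word has at least as many letters as the absolute value of its $z$-exponent sum.
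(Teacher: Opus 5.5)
Your proof is correct, but it is organized differently from the paper's. The paper argues by induction on the syllable length $n$ of $c=C_{i_1}^{\epsilon_1}\cdots C_{i_n}^{\epsilon_n}$. It splits $Wz^{-f(l)}x^{\pm l}$ at the surviving letter $r^{\epsilon_n}$ as $U_1r^{\epsilon_n}U_2$ with $U_1\equiv W'z^{-f(i_n)}x^{\epsilon_n i_n}$. It then applies the lemma itself, with $l$ replaced by $i_n$, to get $|U_1|\ge f(i_n)+i_n$, and finishes by comparing $i_n$ with $l$ to handle $|U_2|$.

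You obtain the same prefix bound without recursion. Your $Px^{\delta i_n}$ is exactly the reduced form of the paper's $U_1$. The $z$-exponent-sum homomorphism $F(\calA)\to\ZZ$, which kills every $C_i$ and hence all of $C$, shows directly that $P$ must contain at least $f(i_n)$ letters to offset the trailing $z^{f(i_n)}$.

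What your route buys is transparency: it isolates the mechanism behind the lemma (elements of $C$ have zero $z$-exponent sum) and removes the induction. The price is that you need a slightly more explicit description of the tail of the reduced form of $c$. Namely, you need that the whole last block $A_{i_n}^{\epsilon_n}$ followed by $z^{f(i_n)}$ survives intact, whereas the paper only uses that $r^{\epsilon_n}$ survives. Claim~\ref{claim:A blocks survive} supplies your stronger fact as well. Your case split on $f(i_n)=f(l)$ versus $f(i_n)\neq f(l)$ replaces the paper's split on $i_n\ge l$ versus $i_n<l$, and the arithmetic in both of your cases is correct.
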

\begin{proof}
If $W$ represents the identity element in $G$, then $$|Wz^{-f(l)}x^{\pm l}|=|z^{-f(l)}x^{\pm l}|=f(l)+l,$$ which satisfies the desired inequality.  
Therefore, we may assume $W$ is non-trivial in $G$.

By Lemma \ref{lem:C free},  $$W=_GC_{i_1}^{\epsilon_{1}}C_{i_2}^{\epsilon_{2}}\cdots C_{i_n}^{\epsilon_{n}}$$ for some $n\geq 1$, where $\epsilon_{k}\in \{\pm 1\}$, and such $(i_{k+1},\epsilon_{k+1})\neq (i_k,-\epsilon_k$) for every $1\leq k<n$. Denote $$U\equiv Wz^{-f(l)}x^{\pm l}.$$
We now prove that $|U|\geq f(l)+l$ by induction on $n$.

Let $n=1$. Then $W=_G C_{i}^{\epsilon}$ for some $i\in \NN,\epsilon\in\{1,-1\}$, and so 
$$U=_G z^{-f(i)}x^{\epsilon i}r^{\epsilon}x^{\epsilon i}z^{f(i)-f(l)}x^{\pm l}.$$ If $i\neq l$, then $f(i)\neq f(l)$ and so the right hand side is freely reduced; in particular, $|U|\geq f(l)+l$. If $i=l$, then $U=_{G}z^{-f(i)}x^{\epsilon i}r^{\epsilon}x^{\epsilon i\pm l}$, where the right hand side is freely reduced; it follows that $|U|\geq f(i)+i=f(l)+l$.

    Let $n\geq 2$ and suppose the statement holds for $n-1$. In particular, denoting $W'=C_{i_1}^{\epsilon_{1}}C_{i_2}^{\epsilon_{2}}\cdots C_{i_{n-1}}^{\epsilon_{n-1}}$ and $U_1=W'z^{-f(i_n)}x^{\epsilon_n i_n}$ we have that 
    \begin{equation}\label{eq:|U1|}
        |U_1|\geq f(i_n)+i_n.
    \end{equation}
    It follows that \begin{eqnarray*} U& =_G & W'C_{i_n}^{\epsilon_{n}}z^{-f(l)}x^{\pm l}\\
    & \equiv &\underbrace{W'z^{-f(i_n)}x^{\epsilon_n i_n}}_{U_1}r^{\epsilon_n}\underbrace{x^{\epsilon_n i_n}z^{f(i_n)}z^{-f(l)}x^{\pm l}}_{U_2}.
    \end{eqnarray*} 
%    Note that the letter $r^{\epsilon_n}$ in the above expression do not cancel out. Indeed, $U_2$ contains no $r^{\pm 1}$, and as for $U_1$: cancellation cannot happen by the assumption that $(i_{k+1},\epsilon_{k+1})\neq (i_k,-\epsilon_k)$.
By Claim \ref{claim:A blocks survive}, $r^{\epsilon_n}$ in the displayed equation does not cancel out, thus:
%It follows that:
\begin{eqnarray*}
        |U| & =& |U_1|+1+|U_2| \\
        & \underset{(\ref{eq:|U1|})}{\geq} & f(i_n)+i_n+1+|U_2|.
    \end{eqnarray*}
    If $i_n\geq l$ this is at least $f(l)+l$, completing the proof. If $i_n<l$, then  $|U_2|= i_n+|f(i_n)-f(l)|+l=i_n-f(i_n)+f(l)+l$, in which case $|U|\geq f(l)+l$ as required.
\end{proof}

The next lemma and claim are direct corollaries of Lemma \ref{lem:r-edge split}.

\begin{lem}\label{lem:ell(p)>2f(i)+2i+1}
Let $u$ be a side of a $t$-band $\Sigma$ in a $t$-minimal diagram over (\ref{Eq:PresG main1 HNN}). If $\Sigma$ contains a $t$-face of rank $l$, where $l\in \NN$, then:
 $$|\Lab(u)|\ge 2f(l)+2l+1.$$ 
\end{lem}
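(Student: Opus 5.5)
We argue on the $C$-side of $\Sigma$ and transfer to the $D$-side using $\phi$. The side labels of $\Sigma$ are $W\equiv C_{i_1}^{\epsilon_1}\cdots C_{i_n}^{\epsilon_n}$ and $\phi(W)$. Since $\phi$ is an isometry of $F(\calA)$ and $F(\calA)\to G$ is isometric (Proposition~\ref{Prop:embed}(b) with Lemma~\ref{lem:PresG satis ddag}), the two sides have equal norms in $G$. So it suffices to bound $|W|$, the norm of the element of $C$ represented by the $C$-side label.

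Suppose $i_k=l$ for some $k$. Write
$$
W\equiv W_1\, C_{i_k}^{\epsilon_k}\, W_2,\qquad W_1\equiv C_{i_1}^{\epsilon_1}\cdots C_{i_{k-1}}^{\epsilon_{k-1}},\quad W_2\equiv C_{i_{k+1}}^{\epsilon_{k+1}}\cdots C_{i_n}^{\epsilon_n}.
$$
Expanding the middle factor gives
$$
W\equiv \underbrace{W_1 z^{-f(l)}x^{\epsilon_k l}}_{P}\; r^{\epsilon_k}\; \underbrace{x^{\epsilon_k l}z^{f(l)}W_2}_{Q}.
$$
If the letter $r^{\epsilon_k}$ survives free reduction, the observation preceding Lemma~\ref{lem:r-edge split} gives $|W|=|P|+1+|Q|$.

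Lemma~\ref{lem:r-edge split}, applied to $W_1\in C$, gives $|P|\ge f(l)+l$. For $Q$, note that $Q^{-1}\equiv W_2^{-1}z^{-f(l)}x^{-\epsilon_k l}$ with $W_2^{-1}$ representing an element of $C$. So Lemma~\ref{lem:r-edge split} applies again and gives $|Q|=|Q^{-1}|\ge f(l)+l$. Summing, $|W|\ge 2f(l)+2l+1$, as required.

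The main obstacle is justifying that $r^{\epsilon_k}$ survives, which Claim~\ref{claim:A blocks survive} gives when $W$ is reduced as a word in the $C_i^{\pm1}$. The band's label need not be reduced, so we use $t$-minimality. If two consecutive faces $\pi_j,\pi_{j+1}$ of $\Sigma$ had labels $C_i^{\epsilon}$ and $C_i^{-\epsilon}$, they would share a $t$-edge and be mirror images of each other. Then, as in the proof of Lemma~\ref{lem:shared r edges}, we could cut out the pair and glue the remaining boundary paths, reducing the number of $t$-faces by $2$. This contradicts $t$-minimality, so the label $W$ is reduced. Since $C_l^{\epsilon_k}$ occurs in $W$, Claim~\ref{claim:A blocks survive} shows that its $r^{\epsilon_k}$ is not cancelled, which completes the argument.
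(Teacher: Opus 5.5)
Your proposal is correct and follows essentially the same route as the paper. You reduce to the $C$-side via the isometry $\phi$, use $t$-minimality (cancelling a mirror pair of consecutive faces) to see that the side label is reduced over $\{C_i^{\pm1}\}$, invoke Claim~\ref{claim:A blocks survive} to split off the $r$-letter of the rank-$l$ face as $|P|+1+|Q|$, and bound both halves by Lemma~\ref{lem:r-edge split}. The only cosmetic difference is that you keep a general sign $\epsilon_k$ and apply the lemma to $Q^{-1}$ explicitly, whereas the paper reverses the orientation of $u$ to make the sign $+1$ and uses the inversion for the right half implicitly.
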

\begin{proof} 
Since $\phi$ is an isometry, both side labels of $\Sigma$ have same norm. It therefore suffices to prove the lemma under the assumption that $u$ is the $C$-side of $\Sigma$.
We have that  
$$\Lab(u)=C_{i_1}^{\epsilon_1}C_{i_2}^{\epsilon_2}\cdots C_{i_n}^{\epsilon_n},$$ where $C_{i_k}^{\epsilon_k}$ is the label of the $C$-side of the $k^{\text{th}}$ $t$-face in $\Sigma$.
Since $\Delta$ is $t$-minimal, $(i_{k+1},\epsilon_{k+1})\neq (i_k,-\epsilon_k)$. Indeed, suppose $(i_{k+1},\epsilon_{k+1})=(i_k,-\epsilon_k)$, then the union of the two corresponding $t$-faces forms a disk subdiagram whose boundary label is trivial in $F(x,z,r)$. As explained in previous proofs, this contradicts $t$-minimality.

Since $\Sigma$ contains a $t$-face of rank $l$, there is $1\leq j\leq n$ such that $i_j=l$. Up to possibly reversing the orientation of $u$, we may further assume that $\epsilon_{j}=1$. Denoting $W_1=C_{i_1}^{\epsilon_1}\cdots C_{i_{j-1}}^{\epsilon_{j-1}}$ and $W_2=C_{i_{j+1}}^{\epsilon_{j+1}}\cdots C_{i_{n}}^{\epsilon_{n}}$, we have:
\begin{equation}\label{eq:Lab(u)_lem4.8}
    \Lab(u)=_GW_1\underbrace{z^{-f(l)}x^{l}rx^{l}z^{f(l)}}_{C_{l}}W_2.
\end{equation} %where $W_1$ and $W_2$ are words representing elements of $C$. Note that the right hand side in (\ref{eq:Lab(u)_lem4.8}) may not be freely reduced. However, since $\Delta$ is $t$-minimal, we may assume that the only cancellations in (\ref{eq:Lab(u)_lem4.8}) happen along $z$-edges;\gil{needs to be explained more carefully} in particular, the letter $r$ in $C_l$ does not cancel out.
By Claim \ref{claim:A blocks survive}, the block $x^{l}rx^{l}$ in (\ref{eq:Lab(u)_lem4.8}) does not cancel out, and so:
$$|\Lab(u)| =  |W_1z^{-f(l)}x^{ l}|+|r|+|x^{l}z^{f(l)}W_2|.$$
By Lemma \ref{lem:r-edge split}, each of $|W_1z^{-f(l)}x^{ l}|$ and $|x^{l}z^{f(l)}W_2|$ is at least $f(l)+l$, completing the proof.
\end{proof}

\begin{claim}\label{claim:conj to C_l}
Let $W$ be a word in $C$ that is conjugate to $C_l$ in $F(\calA)$, then:
$$|W|\geq 2f(l)+2l+1.$$
\end{claim}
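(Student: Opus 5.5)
We write $W$, as an element of the free group $C$, as a reduced word $C_{i_1}^{\epsilon_1}\cdots C_{i_n}^{\epsilon_n}$ over $\{C_i^{\pm1}\}$, using Lemma~\ref{lem:C free}. Since $W$ is conjugate to $C_l$ in $F(\calA)$, the element is nontrivial, so $n\ge 1$. The plan is to show that some syllable $C_{i_j}^{\epsilon_j}$ has $i_j=l$. Once we have that, we repeat the computation of Lemma~\ref{lem:ell(p)>2f(i)+2i+1} verbatim. Inverting $W$ if necessary (which preserves the norm and the conjugacy to $C_l$ up to sign), we may take $\epsilon_j=1$ and write $W=_G W_1 z^{-f(l)}x^l r x^l z^{f(l)} W_2$, with $W_1,W_2$ reduced products of $C$-syllables. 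Claim~\ref{claim:A blocks survive} says the letter $r$ of this block survives free reduction. Hence $|W|=|W_1z^{-f(l)}x^l|+1+|x^lz^{f(l)}W_2|$. Lemma~\ref{lem:r-edge split}, applied to $W_1$ and (after inversion) to $W_2^{-1}$, bounds each of the two outer terms below by $f(l)+l$, which gives $|W|\ge 2f(l)+2l+1$.

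The key step is therefore to produce a syllable of rank $l$. Conjugacy in $F(\calA)$ means the cyclic reductions of $W$ and of $C_l$ over $F(\calA)$ coincide up to cyclic permutation. The cyclic reduction of $C_l$ is $x^lrx^l$: exactly one $r$-letter, surrounded by $x^l$ on both sides, forming a single block $A_l$. On the other side, replace $W$ by its cyclic reduction $W'$ as a word over $\{C_i^{\pm1}\}$. This is a conjugation inside $C$, so it does not change the $F(\calA)$-conjugacy class. By the ``moreover'' part of Claim~\ref{claim:A blocks survive}, cyclically reducing $W'$ over $F(x,z,r)$ cancels no letter of any block $A_{i_k}^{\epsilon_k}$. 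So the cyclic reduction of $W'$ contains exactly $n'$ occurrences of $r^{\pm1}$, one per syllable. Matching the counts of $r$-letters forces $n'=1$ and $\epsilon=1$, so $W'=C_i$ for a single index $i$. Its cyclic reduction is then $x^irx^i$, and comparing with $x^lrx^l$ gives $i=l$.

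It remains to check that $W$ itself, not just $W'$, contains a syllable $C_l^{\pm1}$. The reduced word $W$ has the form $P\,W'\,P^{-1}$ with $W'=C_l$, so $W$ contains the syllable $C_l$ and the argument of the first paragraph applies directly.

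The main obstacle I expect is the bookkeeping in the matching step: making sure the ``moreover'' part of Claim~\ref{claim:A blocks survive} really guarantees that the count of surviving $r$-letters is an invariant of the cyclic reduction. A secondary point is the small care needed when applying Lemma~\ref{lem:r-edge split} to the right-hand factor $x^lz^{f(l)}W_2$, which is handled by inverting it.
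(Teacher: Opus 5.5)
Your proposal is correct and follows essentially the same route as the paper. You count the surviving $r$-letters in the cyclic reductions, using the ``moreover'' part of Claim~\ref{claim:A blocks survive}, to show that $W$ is conjugate to $C_l$ inside $C$, so its reduced $C$-word has the form $P\,C_l\,P^{-1}$. You then split at the surviving $r$ and apply Lemma~\ref{lem:r-edge split} to both halves, the second after inversion.
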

\begin{proof}
We first show that $W$ is conjugate to $C_l$ in $C$.

Write $W$ as a cyclically reduced word in the free basis $\{C_i:i\in \NN\}$, up to conjugacy in $C$. Namely: \begin{equation}\label{eq:W conj to C_l} W=_{F(\calA)}U^{-1}\underbrace{C_{i_1}^{\epsilon_{i_1}}C_{i_2}^{\epsilon_{i_2}}\cdots C_{i_n}^{\epsilon_{i_n}}}_{V}U,
\end{equation}  where $V$ is cyclically reduced as a word over $\{C_i:i\in \NN\}$. Expanding $C_{i_1},\dots,C_{i_n}$ and reducing over $F(\calA)$ the blocks $(x^{i_k}rx^{i_k})^{\epsilon_{k}}$ survive entirely, see Claim \ref{claim:A blocks survive}. Consequently, each $C_{i_k}$ contributes $r^{\pm 1}$ that survives the free reduction. Moreover, these surviving $r$-letters also survive cyclic reduction: cancellation of the first and last $r$-letters would force $(i_n,\epsilon_n)=(i_1,-\epsilon_1)$, contrary to cyclic reduction of $V$. It follows that the cyclically reduced form of $V$ in $F(\calA)$ contains exactly $n$ occurrences of $r^{\pm 1}$. On the other hand, $C_l$ is conjugate in $F(\calA)$ to $rx^{2l}$, which is cyclically reduced and contains exactly one $r$. Since conjugate elements of a free group have cyclically reduced representatives that differ only by cyclic permutation, this implies $n=1$. It follows that $V=C_{j}^{\epsilon}$ for some $j\in \NN$, $\epsilon\in \{\pm 1\}$. Since $C_j^{\epsilon}$ is conjugate in $F(\calA)$ to $r^\epsilon x^{\epsilon \cdot 2j}$, which is cyclically reduced, the same argument implies that $\epsilon=1$ and $j=l$. This shows that $W$ is conjugate to $C_l$ in $C$.

Re-write (\ref{eq:W conj to C_l}) as 
$$W=_{F(x,z,r)}U^{-1}\underbrace{z^{-f(l)}x^lrx^lz^{f(l)}}_{C_l}U.$$
Up to possibly changing $U$, we may assume that the right hand side is freely reduced as a word in $\{C_i:i\in \NN\}^{\pm 1}$. By Claim \ref{claim:A blocks survive}, the $r$-letter in $C_l$ does not cancel out, and so $$|W|=|U^{-1}z^{-f(l)}x^l|+|r|+|x^lz^{f(l)}U|.$$
By Lemma \ref{lem:r-edge split}, each of $|U^{-1}z^{-f(l)}x^l|$ and $|x^lz^{f(l)}U|$ is at least $f(l)+l$, completing the proof.
\end{proof}

\begin{lem}\label{lem:a single concentric}
A $t$-minimal annular diagram $\Delta$ over (\ref{Eq:PresG main1 HNN}) contains at most one concentric $t$-band.
\end{lem}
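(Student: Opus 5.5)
Our approach is to argue by contradiction. Suppose $\Delta$ is a $t$-minimal annular diagram containing two distinct concentric $t$-annuli. By Lemma~\ref{Lem:bands} they do not intersect, so they are nested. Choose two of them, $\Sigma$ and $\Sigma'$, that are adjacent: $\Sigma'$ lies inside $\Sigma$, and the annular subdiagram $\Xi$ between the inner side $u$ of $\Sigma$ and the outer side $v'$ of $\Sigma'$ contains no further concentric $t$-annuli. Since $u$ and $v'$ carry no $t$-edges, $\Xi$ has no radial $t$-bands and no $t$-arches. By Lemma~\ref{Lem:0-hom} it has no $0$-homotopic $t$-annuli. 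Hence $\Xi$ contains no $t$-faces at all and is an annular diagram over $F(\calA)$. So $\Lab(u)$ and $\Lab(v')$ are conjugate in $F(\calA)$.

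Next we identify which sides face each other. Each side of a $t$-annulus is either its $C$-side (letters $x,z,r$) or its $D$-side (letters $y,z,s$). Suppose $u$ is the $C$-side of $\Sigma$ and $v'$ is the $D$-side of $\Sigma'$, or vice versa. Conjugacy in $F(\calA)$ preserves the exponent sums and the cyclically reduced form. By Claim~\ref{claim:A blocks survive}, the cyclic reduction of a nontrivial cyclically reduced word in the $C_i$ still contains $r$-letters, and $D$-words have no $r$ at all. So in this mixed case both labels must be trivial, or rather conjugate to a power of $z$ only. That is impossible for a nonempty cyclically reduced word, and emptiness would already make the annulus removable. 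Hence both facing sides are $C$-sides, or both are $D$-sides. By the symmetry $\phi$, we may take both to be $C$-sides.

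The key step is the following. $\Lab(u)$ and $\Lab(v')$ represent elements $c,c'\in C$ that are conjugate in $F(\calA)$. Repeating the argument in the proof of Claim~\ref{claim:conj to C_l} (comparing the cyclically reduced forms via the surviving $r$-letters and $A$-blocks), we want to conclude that $c$ and $c'$ are conjugate \emph{in $C$}, and in fact conjugate by an element $w\in C$. Granting this, the outer side labels satisfy $\phi(c)=\phi(w)^{-1}\phi(c')\phi(w)$ as well, because $\phi$ restricts to an isomorphism $C\to D$.

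We then rebuild the diagram. Remove $\Sigma$, $\Xi$ and $\Sigma'$, leaving an annular hole whose outer boundary is labeled by a word for $\phi(c)$ and whose inner boundary by a word for $\phi(c')$. Since these are conjugate in $D\le F(\calA)$, the hole can be filled by a diagram over $F(\calA)$ alone. This lowers the number of $t$-faces and contradicts $t$-minimality.

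The main obstacle is proving that $F(\calA)$-conjugacy between elements of $C$ implies $C$-conjugacy. The plan is to cyclically reduce both elements in the basis $\{C_i\}$. By Claim~\ref{claim:A blocks survive}, the sequence of $A$-blocks $(x^{i_k}rx^{i_k})^{\epsilon_k}$, which can be read off from the cyclically reduced form in $F(\calA)$, determines the cyclic sequence $(i_k,\epsilon_k)$. Conjugate cyclically reduced words in a free group are cyclic permutations of each other, so this sequence agrees up to cyclic shift for $c$ and $c'$. Therefore the two $C$-cyclic words coincide and the conjugator can be taken in $C$.
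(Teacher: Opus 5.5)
Your proof is correct, and most of it follows the paper's proof. The common steps are: choose two adjacent concentric annuli, observe that the region $\Xi$ between them is a diagram over $F(\calA)$, rule out a $C$-side facing a $D$-side via the surviving $r$-letters, and then excise $\Sigma\cup\Xi\cup\Sigma'$ and refill over $F(\calA)$.

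The routes differ only at what you call the main obstacle, and the paper never meets that obstacle. The map $\phi$ was defined letter-by-letter on all of $\calA^{\pm1}$ and is an automorphism of the whole free group $F(\calA)$, not merely an isomorphism $C\to D$. So if $\Lab(u)$ and $\Lab(v')$ are conjugate in $F(\calA)$ by some $w\in F(\calA)$, then $\phi(\Lab(u))$ and $\phi(\Lab(v'))$ are conjugate in $F(\calA)$ by $\phi(w)$. That is all the refilling step needs, so the paper never has to show that the conjugator can be taken in $C$.

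Your route uses only that $\phi|_C\colon C\to D$ is an isomorphism. You pay for this by proving that elements of $C$ that are conjugate in $F(\calA)$ are already conjugate in $C$. Your argument for this is sound. By Claim \ref{claim:A blocks survive}, the cyclically reduced form in $F(\calA)$ of a cyclically reduced $C$-word recovers the cyclic sequence $(i_k,\epsilon_k)$ from the signs of the $r$-letters and the adjacent $x$-exponents. When no $z$-power separates two blocks, the segment between them is $x^{2\epsilon j}$, which still determines $j$. This is essentially the first half of the proof of Claim \ref{claim:conj to C_l}, carried out for arbitrary elements of $C$.

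So your argument is longer but more robust. It would apply to an HNN extension of a free group whose associated isomorphism does not extend to the base group, provided the associated subgroup is closed under conjugacy in this sense and not conjugate into the other. The paper's argument exploits the specific symmetry of the construction.

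Your mixed-case sentence ("conjugate to a power of $z$ only") is loosely phrased, but the conclusion is right: both labels would be trivial in $F(\calA)$, and $t$-minimality excludes that.
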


\begin{proof}
Suppose toward contradiction that $\Delta$ contains at least two concentric $t$-bands. Let $\Sigma_1$ and $\Sigma_2$ be distinct concentric $t$-bands in $\Delta$.  For $i=1,2$ denote by $u_i$ and $v_i$ the sides of $\Sigma_i$. Up to adjusting the notations, we may assume that $\Sigma_2$ is contained in the region of the
plane bounded by $\Sigma_1$, that $u_1$ is the inner side of $\Sigma_1$ and that $u_2$ is the outer side of $\Sigma_2$, see Fig. \ref{fig:concentric_bands}. 
\begin{figure}
  % Requires \usepackage{graphicx}
 \centering%% Creator: Inkscape inkscape 0.92.3, www.inkscape.org
%% PDF/EPS/PS + LaTeX output extension by Johan Engelen, 2010
%% Accompanies image file 'fig7.pdf' (pdf, eps, ps)
%%
%% To include the image in your LaTeX document, write
%%   \input{<filename>.pdf_tex}
%%  instead of
%%   \includegraphics{<filename>.pdf}
%% To scale the image, write
%%   \def\svgwidth{<desired width>}
%%   \input{<filename>.pdf_tex}
%%  instead of
%%   \includegraphics[width=<desired width>]{<filename>.pdf}
%%
%% Images with a different path to the parent latex file can
%% be accessed with the `import' package (which may need to be
%% installed) using
%%   \usepackage{import}
%% in the preamble, and then including the image with
%%   \import{<path to file>}{<filename>.pdf_tex}
%% Alternatively, one can specify
%%   \graphicspath{{<path to file>/}}
%% 
%% For more information, please see info/svg-inkscape on CTAN:
%%   http://tug.ctan.org/tex-archive/info/svg-inkscape
%%
\begingroup%
  \makeatletter%
  \providecommand\color[2][]{%
    \errmessage{(Inkscape) Color is used for the text in Inkscape, but the package 'color.sty' is not loaded}%
    \renewcommand\color[2][]{}%
  }%
  \providecommand\transparent[1]{%
    \errmessage{(Inkscape) Transparency is used (non-zero) for the text in Inkscape, but the package 'transparent.sty' is not loaded}%
    \renewcommand\transparent[1]{}%
  }%
  \providecommand\rotatebox[2]{#2}%
  \newcommand*\fsize{\dimexpr\f@size pt\relax}%
  \newcommand*\lineheight[1]{\fontsize{\fsize}{#1\fsize}\selectfont}%
  \ifx\svgwidth\undefined%
    \setlength{\unitlength}{236.84609793bp}%
    \ifx\svgscale\undefined%
      \relax%
    \else%
      \setlength{\unitlength}{\unitlength * \real{\svgscale}}%
    \fi%
  \else%
    \setlength{\unitlength}{\svgwidth}%
  \fi%
  \global\let\svgwidth\undefined%
  \global\let\svgscale\undefined%
  \makeatother%
  \begin{picture}(1,0.48825642)%
    \lineheight{1}%
    \setlength\tabcolsep{0pt}%
    \put(0,0){\includegraphics[width=\unitlength,page=1]{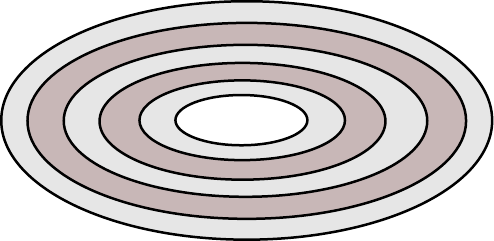}}%
    \put(0.06957707,0.2159184){\color[rgb]{0,0,0}\makebox(0,0)[lt]{\lineheight{1.25}\smash{\begin{tabular}[t]{l}$\Sigma_1$\end{tabular}}}}%
    \put(0.79962712,0.23384857){\color[rgb]{0,0,0}\makebox(0,0)[lt]{\lineheight{1.25}\smash{\begin{tabular}[t]{l}$\Xi$\end{tabular}}}}%
    \put(0.70666476,0.23416617){\color[rgb]{0,0,0}\makebox(0,0)[lt]{\lineheight{1.25}\smash{\begin{tabular}[t]{l}$\Sigma_2$\end{tabular}}}}%
    \put(0,0){\includegraphics[width=\unitlength,page=2]{fig7.pdf}}%
    \put(0.15512651,0.26305205){\color[rgb]{0,0,0}\makebox(0,0)[lt]{\lineheight{1.25}\smash{\begin{tabular}[t]{l}$u_1$\end{tabular}}}}%
    \put(0,0){\includegraphics[width=\unitlength,page=3]{fig7.pdf}}%
    \put(0.72824067,0.32004456){\color[rgb]{0,0,0}\makebox(0,0)[lt]{\lineheight{1.25}\smash{\begin{tabular}[t]{l}$u_2$\end{tabular}}}}%
  \end{picture}%
\endgroup%
\\
  \caption{Multiple concentric $t$-bands}  \label{fig:concentric_bands}
\end{figure}
Let $\Xi$ denote the annular diagram bounded between $\Sigma_1$ and $\Sigma_2$. Up to maybe replacing $\Sigma_1$ by a different concentric $t$-band, we may assume that $\Xi$ contains no concentric $t$-bands. Furthermore, $\Xi$ contains no $t$-arches or radial $t$-bands, as it is bounded away from $\partial\Delta$, and no $0$-homotopic annuli, by Lemma \ref{Lem:0-hom} and since $\Delta$ is $t$-minimal.
It follows that $\Xi$ contains no $t$-bands, namely, it is a van Kampen diagram over $F(\calA)$. 
By van Kampen Lemma \ref{Lem:van Kampen}(b), it follows that $\Lab(u_1)$ and $\Lab(u_2)$ are conjugate in $F(\calA)$. 

The cyclic version of Claim \ref{claim:conj to C_l} shows that a nontrivial $C$-word has a cyclically reduced representative containing both $x$ and $r$, whereas
every element of $D$ lies in $F(y, z, s).$ It follows that no non-trivial element of $C$ is conjugate in $F(\calA)$ to an element of $D$.
Therefore, either both $\Lab(u_1)$ and $\Lab(u_2)$ belong to $C$, or both belong to $D$. Without loss of generality, assume the former; the latter case is entirely analogous, with $\phi^{-1}$ replacing $\phi$.
It follows that $\Lab(v_1)=\phi(\Lab(u_1))$ and $\Lab(v_2)=\phi(\Lab(u_2))$. 

The map $\phi$, as defined on words over $\calA^{\pm 1}$, commutes with conjugation in $F(\calA)$. Using that $\Lab(u_1)$ and $\Lab(u_2)$ are conjugate in $F(\calA)$, it follows that also  $\phi(\Lab(u_1))$ and $\phi(\Lab(u_2))$ are conjugate in $F(\calA)$.
Consider the annular subdiagram of $\Delta$ which is the union of $\Sigma_1$, $\Xi$, and $\Sigma_2$. As the labels of its boundary components, $\phi(\Lab(u_1))$ and $\phi(\Lab(u_2))$, are conjugate in $F(\calA)$, we can remove it from $\Delta$ and patch the resulting hole by identifying the boundary components in the obvious way. This operation preserves the homotopy type and boundary labels of $\Delta$, but reduces the number of $t$-faces by at least $2$, contrary to the assumption that $\Delta$ is $t$-minimal.

We conclude that $\Delta$ contains at most one concentric $t$-band.
\end{proof}

Let $\Delta$ be an annular diagram. We denote the total length of its boundary components by: $$\ell(\partial\Delta)=\ell(\partial_{ext} \Delta)+\ell(\partial_{int} \Delta).$$

The next two lemmas show that, in $t$-minimal annular diagrams over (\ref{Eq:PresG main1 HNN}), the word-metric distance between the components of $\partial\Delta$ is bounded by a function of $\ell(\partial\Delta)$.

\begin{lem}
\label{lem: linear m connecting the boundary}
    Let $\Delta$ be $t$-minimal annular diagram over (\ref{Eq:PresG main1 HNN}) containing a radial $t$-band, whose boundary labels are cyclically $t$-minimal. 
    Then there exists a path $m$ connecting the two boundary components of $\Delta$, such that
    $$|\Lab(m)|\le \ell(\partial \Delta)+1$$
\end{lem}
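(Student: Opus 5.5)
Since both boundary labels are cyclically $t$-minimal and $\Delta$ is $t$-minimal, Lemma~\ref{Lem:NoArches} applies. Because $\Delta$ contains a radial $t$-band, every $t$-band of $\Delta$ is radial. Fix a radial $t$-band $\Sigma$ with sides $u$ and $v$. It starts at a $t$-edge $e_0$ on $\partial_{ext}\Delta$ and ends at a $t$-edge $e_n$ on $\partial_{int}\Delta$. The side $u$ is already a path joining the two boundary components, so the task is to show that some side of some radial band has label of norm at most $\ell(\partial\Delta)+1$. A band side alone could be very long, so I will not bound it directly. Instead I will compare it with a boundary path and use the rigidity of the $r$-letters coming from Claim~\ref{claim:A blocks survive}.

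\textbf{Case 1: $\Sigma$ is the only $t$-band.} Cut $\Delta$ along $\Sigma$. The complement is a disk subdiagram $\Xi$ with boundary $u\,p\,v^{-1}q$ (after orienting), where $p\subset\partial_{int}\Delta$ and $q\subset\partial_{ext}\Delta$. By Lemma~\ref{Lem:0-hom} and the absence of other bands, $\Xi$ contains no $t$-faces, so it is a diagram over $F(\calA)$. Hence $\Lab(u)=_{F(\calA)}\Lab(q^{-1})\Lab(v)\Lab(p^{-1})$ up to orientation.

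Now use the structure of $\Lab(u)$ and $\Lab(v)=\phi(\Lab(u))$. The $C$-side lives in $F(x,z,r)$ and the $D$-side in $F(y,z,s)$. By Claim~\ref{claim:A blocks survive}, the $r$-letters of $u$ survive free reduction. In the identity $\Lab(u)\Lab(p)=\Lab(q^{-1})\Lab(v)$, each surviving $r$-letter on the left must therefore cancel or be matched on the right. The side $v$ contributes no $r$, so the $r$-letters must come from $q$ or $p$. Consequently the number of $C$-syllables in $\Lab(u)$ is at most $\ell(p)+\ell(q)$.

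With that count in hand, I would bound $|\Lab(u)|$ directly. I would split $u$ at an $r$-letter matched to a boundary letter, take the initial piece of $u$ up to that $r$-edge, and continue along the boundary. Lemma~\ref{lem:r-edge split} gives the norm equality across a surviving $r$. So the path that goes from $\partial_{ext}\Delta$ along $u$ to the last $r$-edge matched with $\partial_{ext}\Delta$, and then jumps along the identified boundary point, has norm controlled by the boundary segment. The "$+1$" accounts for the $r$-edge (or the $t$-edge) itself.

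\textbf{Case 2: several radial bands.} Consecutive radial bands $\Sigma,\Sigma'$ bound disk subdiagrams $\Xi$ over $F(\calA)$ between them. Lemma~\ref{lem: r edge cases} tells us that each $r$-edge of a band lies either on $\partial\Delta$ or on a neighboring band. I would choose a band and repeat the argument of Case 1 inside the region between it and its neighbor. A connecting path built from a boundary subpath plus a piece of a band side then has norm at most $\ell(\partial\Delta)+1$.

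\textbf{Main obstacle.} The hardest step is the counting/matching of surviving $r$-letters, i.e.\ showing that a geodesic for $\Lab(u)$ can be decomposed at an $r$ into pieces whose norms are dominated by boundary lengths. The worry is that the cancellations in $F(\calA)$ are long $z$-runs whose lengths are governed by $f$. My first attempt would be to choose $m$ as a boundary subpath of $\partial_{ext}\Delta$ followed by a single $t$-edge, namely $e_0$ or $e_n$ traversed across the band. This avoids bounding band sides entirely; the band's faces are used only to cross from one side to the other. Concretely, I would walk along $\partial_{ext}\Delta$ to $e_0$, along $u$ inside $\Sigma$, and exit at $e_n$. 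Then I would replace $\Lab(u)$ by the equal element $\Lab(q^{-1})\Lab(v)\Lab(p^{-1})$. The hope is that the two boundary contributions, together with the $v$-side taken with its isometric comparison to $u$ under $\phi$, telescope to a bound of $\ell(\partial\Delta)+1$.
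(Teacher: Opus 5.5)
\textbf{Verdict: there is a genuine gap.} Your preliminaries are fine: all bands are radial by Lemma~\ref{Lem:NoArches}, and in the single-band case every $r$-edge of $u$ lies on $\partial\Delta$. But the two estimates that actually produce $m$ are missing, and the fallback you describe cannot work.

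\textbf{The case split, and the shared $r$-edge case.} The dichotomy that matters is not ``one band versus several''. It is whether some $r$-edge on a side of $\Sigma$ is interior, in which case Lemma~\ref{lem: r edge cases} says it is shared with another radial band $\Sigma'$, or whether all $r$-edges of $\Sigma$ lie on $\partial\Delta$.
\begin{itemize}
\item In the shared case the paper takes $m=u$, the whole band side, which is exactly what you declined to do. Write $u=u_1e_ru_2$ and $u'=u_2'e_r^{-1}u_1'$.
\item By Lemma~\ref{lem:shared r edges}, the two faces at $e_r$ have ranks $i\neq j$. Hence free reduction of $\Lab(u_1)\Lab(u_1')$ cancels at most $2\min\{i,j\}$ letters from each factor, while at least $f(j)+j\ge 2j$ letters of $\Lab(u_1')$ survive.
\item This gives $|\Lab(u_1)|\le|\Lab(u_1u_1')|=|\Lab(w_1)|$ for a boundary subpath $w_1$, since $u_1u_1'w_1^{-1}$ bounds a disk.
\item Together with the symmetric bound for $u_2$ and the additivity $|\Lab(u)|=|\Lab(u_1)|+1+|\Lab(u_2)|$ (from Claim~\ref{claim:A blocks survive}), this yields $|\Lab(u)|\le\ell(\partial\Delta)+1$.
\end{itemize}
Your Case~2 (``repeat the argument inside the region between it and its neighbor'') never uses that the two ranks differ, and that is the crux.

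\textbf{The case where all $r$-edges of $\Sigma$ lie on $\partial\Delta$.} This covers your Case~1, and it needs a transition argument.
\begin{itemize}
\item If the $r$-edge of $\pi_1$ lies on $\partial_{int}\Delta$, take $m=u_1$ with $\Lab(u_1)\equiv z^{-f(l)}x^{\pm l}$. Pay for it with $u_2$, which is homotopic to a subpath of $\partial_{int}\Delta$ and has norm at least $f(l)+l$ by Lemma~\ref{lem:r-edge split}. The case where the $r$-edge of $\pi_n$ lies on $\partial_{ext}\Delta$ is symmetric.
\item Otherwise there are consecutive faces $\pi_i,\pi_{i+1}$ whose $r$-edges lie on $\partial_{ext}\Delta$ and $\partial_{int}\Delta$ respectively. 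Take $m$ to be the piece of $u$ between them, labelled $x^{\pm l}z^{f(l)-f(l')}x^{\pm l'}$.
\item Its norm is at most $f(l)+l+f(l')+l'$. This is dominated, again via Lemma~\ref{lem:r-edge split}, by the norms of the prefix and suffix of $u$, which equal norms of boundary subpaths.
\end{itemize}

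\textbf{Why your version of this case does not close.}
\begin{itemize}
\item Your path ``along $u$ to the last $r$-edge matched with $\partial_{ext}\Delta$'' starts and ends on $\partial_{ext}\Delta$, so it does not join the two components.
\item You never state the needed comparison of the middle segment's upper bound against the lower bounds on the prefix and suffix.
\item Lemma~\ref{lem:r-edge split} is the lower bound $|Wz^{-f(l)}x^{\pm l}|\ge f(l)+l$, not the norm additivity across a surviving $r$.
\item The count $n\le\ell(p)+\ell(q)$ of $C$-syllables controls nothing, because a single syllable has length $2f(i)+2i+1$.
\end{itemize}

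\textbf{The ``telescoping'' fallback is circular.} Since $\phi$ is an isometry, $|\Lab(v)|=|\Lab(u)|$. So $\Lab(u)=_{F(\calA)}\Lab(q)^{-1}\Lab(v)\Lab(p)^{-1}$ only gives $|\Lab(u)|\le\ell(p)+\ell(q)+|\Lab(u)|$, which is vacuous. Also, crossing the band through $e_0$ or $e_n$ never moves you from one boundary component to the other.
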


\begin{proof}
Let $\Sigma$ be a radial $t$-band in $\Delta$. By Lemma \ref{lem: r edge cases}, each $r$-edge in the sides of $\Sigma$ is either shared with $\partial\Delta$, or with another maximal $t$-band, $\Sigma'$. We therefore consider two cases:

\emph{Case 1. There is an $r$-edge $e_r$ shared by $\Sigma$ and another maximal $t$-band, $\Sigma'$.}

By Lemma \ref{Lem:NoArches}, $\Sigma'$, like $\Sigma$, is radial. Let $u$ and $u'$ denote the $C$-sides of $\Sigma$ and $\Sigma'$, respectively. Then $u$ and $u'$ decompose as $$u=u_1e_r u_2 ~~\text{ and }~~ u'=u_2'e_r^{-1}u_1'.$$ 
Let $i$ (respectively, $j$) denote the rank of the $t$-face $\pi$ ($\pi'$) in $\Sigma$ ($\Sigma'$) whose boundary contains $e_r$ ($e_r^{-1}$). By Lemma \ref{lem:shared r edges}, $i\neq j$.

\begin{figure}
  % Requires \usepackage{graphicx}
 \centering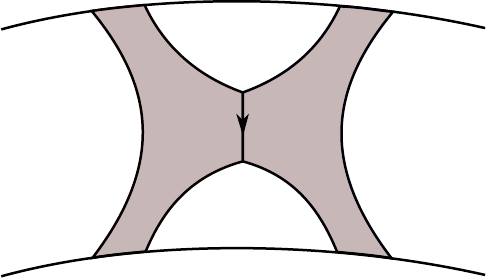\\
  \caption{Case 1 in the proof of Lemma \ref{lem: linear m connecting the boundary}}
\end{figure}

We claim that $$|\Lab(u_1)|\leq |\Lab(u_1u_1')|.$$ Indeed, the freely reduced form of $\Lab(u_1)$ ends either with $z^{\pm1 }x^{\pm i}$, or with $r^{\pm1}x^{\pm 2i}$; the latter occurs precisely when $\pi$ is preceded in $\Sigma$ by another $t$-face of rank $i$. 
Similarly, the freely reduced form of $\Lab(u_1')$ begins either with $x^{\pm j}z^{\pm1 }$, or with $x^{\pm 2j}r^{\pm1}$, where the latter occurs when $\pi'$ is followed in $\Sigma'$ by another $t$-face of rank $j$. It follows that at most $2\min\{i,j\}$ letters from each of $\Lab(u_1)$ and $\Lab(u_1')$ cancel when freely reducing the concatenation $\Lab(u_1)\Lab(u_1')$. 
On the other hand, after this cancellation, at least
$j+f(j)\geq 2j\geq 2\min\{i,j\}$ letters of $\Lab(u_1')$ remain. Hence, the letters contributed by $\Lab(u_1')$ that survive the cancellation compensate for all the letters canceled from $\Lab(u_1)$, giving
$|\Lab(u_1u_1')|\geq |\Lab(u_1)|$ as claimed.

Similarly, one shows that $$|\Lab(u_2)|\leq |\Lab(u_2'u_2)|.$$

Let $w_1$ be the subpath of $\partial\Delta$ with same starting point and ending point as the concatenation $u_1u_1'$. Consider the subdiagram bounded by the loop $u_1u_1'w_1^{-1}$. It is simply connected and therefore $\Lab(u_1u_1'w_1^{-1})=_G1$. 
It follows that $$|\Lab(w_1)|= |\Lab(u_1u_1')|.$$
Similarly, denoting by $w_2$ the subpath of $\partial \Delta$ with same starting and ending point as $u_2'u_2$, and considering the subdiagram bounded by the loop $u_2'u_2w_2^{-1}$, one gets that
$$|\Lab(w_2)|=|\Lab(u_2'u_2)|.$$
Recall that $u=u_1e_r u_2$. By Claim \ref{claim:A blocks survive}, the $r$-letter $\Lab(e_r)$ in $\Lab(u)$ does not cancel out, and so $|\Lab(u)| = |\Lab(u_1)|+1+|\Lab(u_2)|$. Using the four displayed relations, we obtain:
\begin{eqnarray*}
|\Lab(u)|-1 & = & |\Lab(u_1)| +|\Lab(u_2)| \\
    & \leq & |\Lab(u_1u_1')| + |\Lab(u_2'u_2)|\\
    & = & |\Lab(w_1)|+|\Lab(w_2)|\\
    & \leq & \ell(\partial\Delta). \\
\end{eqnarray*}
Taking $m=u$ completes the proof in this case.

\emph{Case 2. All $r$-edges in $\Sigma$ are shared with $\partial\Delta$.}

Denote by $\pi_1,\dots,\pi_n$ the $t$-cells that form $\Sigma$, such that for $i=1,\dots,n-1$, $\pi_i$ and $\pi_{i+1}$ share a $t$-edge and such that $\pi_1$ shares a $t$-edge with $\partial_{ext}\Delta$ and $\pi_n$ with $\partial_{int}\Delta$. Without loss of generality assume that $u$ initiates in $\partial_{ext}\Delta$ and terminates in $\partial_{int}\Delta$. 

Suppose, first, that the $r$-edge in $\pi_1$ occurs in $\partial_{int}\Delta$. Denote by $e_r$ this $r$-edge, and by $l$ the rank of $\pi_1$. The path $u$ decomposes as 
$$u=u_1e_ru_2,$$
and we have that $\Lab(u_1)=z^{-f(l)}x^{\pm l}$ has norm $|\Lab(u_1)|=f(l)+l$. Further, $\Lab(u_2)=x^{\pm l}z^{f(l)}W$ for a word $W$ representing an element in $C$. By Lemma \ref{lem:r-edge split}, it follows that $$|\Lab(u_2)|\geq f(l)+l.$$ 
Let $w$ be the subpath of $\partial_{int}\Delta$ having the same starting and ending point as $u_2$. Using the same argument as in case 1, consider the subdiagram bounded by the loop $u_2w^{-1}$. It is simply connected, and therefore $\Lab(u_2)=_G\Lab(w)$. It follows that \begin{eqnarray*}
    \ell(\partial\Delta)& \geq & |\Lab(w)| \\
    & = &|\Lab(u_2)|\\ 
%    & \geq & |\Lab(u)|-\Lab(u_1)\\
%    & \geq & (2f(l_1)+2l_1+1)-(f(l_1)+l_1)\\
    & \geq & f(l) + l\\
    & = & |\Lab(u_1)|.
\end{eqnarray*}
Note further that $u_1$ initiates from $\partial_{ext}\Delta$ and terminates at $\partial_{int}\Delta$; taking $m=u_1$ completes the proof under the assumption that the $r$-edge in $\pi_1$ occurs in $\partial_{int}\Delta$.

The same exact argument gives a proof in case the $r$-edge in $\pi_n$ occurs in $\partial_{ext}\Delta$.

We can therefore assume that neither of these happen, namely, that the $r$-edge from $\pi_1$ belongs to $\partial_{ext}\Delta$ and the $r$-edge from $\pi_n$ to $\partial_{int}\Delta$. Under this assumption, there must be $i\in \{1,\dots,n-1\}$, for which the $r$-edge of $\pi_{i}$ belongs to $\partial_{ext}\Delta$ but the $r$-edge of $\pi_{i+1}$ to $\partial_{int}\Delta$. Denote those edges by $e_r$ and $e_r'$ respectively. Then $u$ decomposes as 
$$u=u_1e_rm e_r' u_2,$$
where $m$ initiates from $\partial_{ext}\Delta$ and terminates at $\partial_{int}\Delta$. Denote further by $l$ the rank of $\pi_i$ and by $l'$ the rank of $\pi_{i+1}$. Then $\Lab(m)\equiv x^{\pm l}z^{f(l)}z^{-f(l')}x^{\pm l'}$ and therefore $$|\Lab(m)|\leq f(l)+l+f(l')+l'.$$ 
Let $w_1$ be the subpath of $\partial_{ext}\Delta$ with the same starting and ending points as $u_1$. Since the subdiagram bounded by the loop $u_1w_1^{-1}$ is simply connected, $\Lab(u_1)=_G\Lab(w_1)$ and so they have the same norm. Similarly, the label of the subpath $w_2$ of $\partial_{int}\Delta$ having the same starting and ending as $u_2$ has the same norm as $\Lab(u_2)$. Using Lemma \ref{lem:r-edge split}, we obtain that $|\Lab(u_1)|\geq f(l)+l$ and $|\Lab(u_2)|\geq f(l')+l'$. It follows that:
\begin{eqnarray*}
    \ell(\partial\Delta) & \geq & |\Lab(w_1)|+|\Lab(w_2)| \\
    & = & |\Lab(u_1)|+|\Lab(u_2)|\\
    & \geq & f(l)+l+f(l')+l'\\
    & \geq &|\Lab(m)|.
\end{eqnarray*}
This completes the proof of this case.
\end{proof}

\begin{lem}\label{lem:m in concentric case}
    Let $\Delta$ be a $t$-minimal annular diagram over (\ref{Eq:PresG main1 HNN}) consisting of a single concentric $t$-band. Denote $\ell(\partial\Delta)=\ell$.  %whose boundary labels are cyclically $t$-minimal. 
    Then there exists a path $m$ connecting the two boundary components of $\Delta$, such that
    $$|\Lab(m)|\le 2f(\ell)+1$$
\end{lem}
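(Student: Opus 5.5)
The plan is to use the $t$-edges of the band as connecting paths. Let $\Sigma$ be the unique concentric $t$-band, formed by faces $\pi_1,\dots,\pi_n$, and let $u$ and $v$ be its sides. Since $\Delta$ consists only of $\Sigma$, these sides are the two boundary components; say $u=\partial_{ext}\Delta$ and $v=\partial_{int}\Delta$, up to orientation. Each face $\pi_k$ has a $t$-edge $e_k$ joining the two boundary components. Going halfway around $\partial\pi_k$ from $e_{k-1}$, we obtain a path from a vertex of $\partial_{ext}\Delta$ to a vertex of $\partial_{int}\Delta$ whose label has the form
\[
z^{\mp f(i)}t^{\pm1},
\]
where $i$ is the rank of $\pi_k$. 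Concretely, if the $C$-side label is $z^{-f(i)}x^irx^iz^{f(i)}$, then the $t$-edge sits at the corner next to $z^{\pm f(i)}$, and we may take $m=e_k$ itself. That path has label of length $1$, which already satisfies the bound. The actual risk is that the $t$-edge endpoints are not \emph{vertices of the boundary components} in a usable way, but in this diagram every vertex lies on $u$ or on $v$, so $e_k$ joins them directly.

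Since the statement asks for $2f(\ell)+1$, I expect the intended $m$ to be a side path through one face rather than just $e_k$. For that version I would argue as follows. Every $t$-face contributes at least $2f(i)+2i+1$ letters to the $C$-side, by Lemma~\ref{lem:ell(p)>2f(i)+2i+1} applied to the single-face band. Hence $i\le \ell$, and since $f$ is increasing, $f(i)\le f(\ell)$. The path $m$ running from a $t$-edge endpoint on one side, through a $z^{\pm f(i)}$ segment, across $t$, and along $z^{\pm f(i)}$ on the other side therefore has length at most $2f(i)+1\le 2f(\ell)+1$.

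The key steps, in order, are these:
\begin{enumerate}
\item Identify the boundary components with the sides of $\Sigma$.
\item Pick any face $\pi$ of rank $i$.
\item Bound $i\le \ell$, using that the side length is at least the rank.
\item Read a path in $\partial\pi$ from a $u$-vertex to a $v$-vertex of length at most $2f(i)+1$, and conclude by monotonicity of $f$.
\end{enumerate}

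The main subtlety is the third step. The bound should use the actual path length $\ell(\partial\Delta)$ rather than norms, since the labels are not assumed cyclically $t$-minimal. This is fine, because each face's side literally contains the subword $x^irx^i$, so $\ell\ge 2i+1>i$.
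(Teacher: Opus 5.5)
\textbf{There is a genuine gap.} Your argument rests on the claim that, because every face of $\Delta$ lies in $\Sigma$, the two sides of $\Sigma$ \emph{are} the two boundary components of $\Delta$. This is false, and the failure is exactly the situation the lemma is about. In a $t$-minimal diagram the band can fold onto itself, so that edges of its sides are shared by two faces (or by one face twice) and are therefore interior.

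The basic example is the $t$-minimal annular diagram for $U_n=x^nrx^n$ and $V_n=y^nsy^n$. It consists of one face of rank $n$, with boundary label $W_n^{-1}U_nW_nV_n^{-1}$, in which the subpaths labeled $W_n^{-1}$ and $W_n$ are glued to each other. In this diagram:
\begin{itemize}
\item the $t$-edge and all $z$-edges are interior;
\item both endpoints of the $t$-edge lie off $\partial\Delta$;
\item $\partial\Delta$ consists only of the $x^nrx^n$ and $y^nsy^n$ pieces.
\end{itemize}
So your first conclusion ($m=e_k$, with label of length $1$) is not merely unjustified but wrong. If some $t$-edge always joined the two components, you would get $c(U_n,V_n)\le 4n+3$, contradicting Lemma~\ref{Lem:LB} for superlinear $f$.

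Your second version picks the right path: the subpath of $\partial\pi$ labeled $z^{f(i)}tz^{f(i)}$ (up to orientation) between the $x$-block and the $y$-block. But both facts you need about it again come from the false identification.
\begin{itemize}
\item Lemma~\ref{lem:ell(p)>2f(i)+2i+1}, applied to a one-face band, only says $|C_i|=2f(i)+2i+1$; it says nothing about $\ell(\partial\Delta)$.
\item ``The side contains $x^irx^i$'' gives $\ell\ge 2i+1$ only if those edges actually lie on $\partial\Delta$.
\item Nothing in your proposal shows that the endpoints of $m$ lie on $\partial\Delta$, let alone on different components.
\end{itemize}

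The missing step is to prove that every $x$-edge and every $y$-edge of $\partial\pi$ lies on $\partial\Delta$. Suppose an $x$-edge $e$ of $\pi$ were interior. Then it is shared with a face $\pi'\neq\pi$ of $\Sigma$, since no relator contains both $x$ and $x^{-1}$, and both occurrences lie on the $C$-side. Taking an innermost such pair, the $C$-side contains a subpath $eue^{-1}$ with $u$ free of $x$-edges. Hence $\pi,\pi'$ are consecutive and $\Lab(u)\equiv z^{f(i)}z^{-f(i')}$. By injectivity of $f$, this forces $\pi$ and $\pi'$ to have the same rank with opposite orientations, i.e.\ a cancellable pair, contradicting $t$-minimality.

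Once this is established, concentricity of $\Sigma$ places the $x$-block and the $y$-block of $\pi$ on different boundary components. So $m$ joins them, and $\ell\ge 2i$ gives $|\Lab(m)|=2f(i)+1\le 2f(\ell)+1$.
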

\begin{proof}
Let $\Sigma$ be the concentric $t$-band in $\Delta$. Let $\pi$ be a $t$-face in $\Sigma$, and denote its rank by $i$; so $\Lab(\partial\pi)\equiv t^{-1}C_i^{\pm 1}tD_i^{\mp 1}$. 
We decompose $\partial\pi=m'q_1mq_2$ such that:
$$\Lab(\partial\pi)=\underbrace{z^{-f(i)}t^{-1}z^{-f(i)}}_{\Lab(m')}\underbrace{x^{\pm i}r^{\pm1}x^{\pm i}}_{\Lab(q_1)}\underbrace{z^{f(i)}tz^{f(i)}}_{\Lab(m)} \underbrace{y^{\pm i}s^{\pm1}y^{\pm i}}_{\Lab(q_2)}.$$

Note that each boundary label of $\Delta$ is equal, in $F(\calA)$, to a cyclic conjugate of the corresponding side label of $\Sigma$. From here it is straightforward to show that $i\leq |\Lab(\partial\Delta)|\leq \ell(\partial\Delta)\leq \ell$.
In particular, $|\Lab(m)|=2f(i)+1\leq 2f(\ell)+1$.

We now claim that all $x$-edges in $\partial\pi$ lie on $\partial\Delta$. Suppose not; then there exists an $x$-edge $e$ occurring in $\partial\pi$, such that $e^{-1}$ belongs to $\partial\pi'$, for some face $\pi'$ in $\Sigma$. By the form of the defining relators, the boundary label of a single face cannot contain both $x$ and $x^{-1}$; consequently, $\pi\neq \pi'$. It follows that the $C$-side of $\Sigma$ contains a subpath of the form $e u e^{-1}$. Up to maybe replacing $u$ by a subpath of $u$, we may assume $u$ contains no $x$-edges, and so the $t$-faces $\pi$ and $\pi'$ are consecutive in $\Sigma$, that is, they share a $t$-edge. 
Since $\Lab(eue^{-1})$ is a subword of a word in the generators $\{C_i:i\in\NN\}$, while $u$ contains no $x$-edges but is both preceded and followed by an $x$-edge, it follows that $$\Lab(u)=z^{f(i)}z^{-f(i')},$$ where $i$ and $i'$ denote the ranks of $\pi$ and $\pi'$, respectively. The subdiagram bounded by the loop $u$ is simply connected, and therefore $\Lab(u)=_G1$, from which it follows that $f(i)=f(i')$. Since $f$ is strictly increasing, we get $i=i'$, which, together with the fact that $\pi$ and $\pi'$ share an $x$-edge, implies that $\Lab(\partial\pi')=\Lab(\partial\pi)^{-1}$. Consequently, the boundary label of $\pi\cup \pi'$ is trivial in $F(\calA)$. Therefore, the disk subdiagram $\pi\cup \pi'$ can be removed from $\Delta$ and the remaining hole can be patched in the obvious way, without affecting the homotopy type and boundary label of $\Delta$. However, the resulting diagram would have two $t$-faces less than the number of $t$-faces in $\Delta$, which contradicts the assumption that $\Delta$ is $t$-minimal.
We conclude that all $x$-edges in $\partial\pi$ lie on $\partial\Delta$.

Similarly, one shows that all $y$-edges in $\partial\pi$ lie on $\partial\Delta$.

It remains to observe that, since $\Sigma$ is concentric and $q_1$ and $q_2$ intersect opposite sides of $\Sigma$, those $x$-edges and the $y$-edges belong to different boundary components of $\Delta$. Consequently, $m$ starts on one boundary component of $\Delta$ and ends on the other.
%Since $\Sigma$ is concentric and $q_1$ and $q_2$ belong to different sides of $\Sigma$, they belong to different boundary components of $\Delta$. For $j=1,2$, let $\partial_j\Delta$ denote the boundary component of $\Delta$ containing $q_j$.
%It follows that $m$ initiates at $\partial_1\Delta$ and terminates at $\partial_2\Delta$. It remains to bound $|\Lab(m)|$.

%For $j=1,2$, let $w_j$ be the subpath of $\partial_i\Delta$ with same starting and ending point as $q_j$. The loop $q_jw_j^{-1}$ bounds simply connected subdiagram, and so, by van Kampen Lemma \ref{Lem:van Kampen}, $w_j=_G q_j$. It follows that, for $j=1,2$, $|\Lab(w_j)|=|\Lab(q_j)|=2i+i$.  Therefore, $$ \ell(\partial\Delta)\geq \ell(w_1)+\ell(w_2)\geq |\Lab(w_1)|+|\Lab(w_2)|\geq 4i+2.$$ 
\end{proof}

The proof of the second direction of Theorem \ref{Thm:main1} for non-decreasing, at least linear functions is split into the following two lemmas. 

Consider a non-decreasing, at least linear function $f\colon \NN\to \NN\cup \{0\}$. Then $\hat f(n)=f(n)+n$ is strictly increasing, $\hat f(n)\geq n$, and $\hat f\sim f$. Indeed, let $C,d$ be such that $n\leq Cf(Dn)$, then $$f(n)\leq \hat f(n)\leq f(n) + Cf(Dn)\leq (C+1)f(\max\{D,1\}n).$$ Hence, up to maybe replacing $f$ by $\hat f$, we may assume that $f$ is strictly increasing.

\begin{lem}[Upper bound]\label{lem:upper bound}
    $\CL_G\preceq f$.
\end{lem}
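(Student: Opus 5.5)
Given conjugate $g,h\in G$ with $|g|+|h|\le n$, I will produce a conjugator of length $O(f(Cn))$ in three steps: reduce to cyclically $t$-minimal representatives, pick a minimal annular diagram, and cross it with a short path.

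\textbf{Step 1: reduction.} By Lemma~\ref{Lem:CTM}, applied to the presentation (\ref{Eq:PresG main1 HNN}), which satisfies ($\ddag$) by Lemma~\ref{lem:PresG satis ddag}, there are cyclically $t$-minimal words $W_g,W_h$ with the following properties:
\begin{itemize*}
\item $\|W_g\|\le |g|$ and $\|W_h\|\le |h|$;
\item $W_g$ and $W_h$ represent $g'\in g^G$ and $h'\in h^G$;
\item $c(g,g')\le |g|$ and $c(h,h')\le |h|$.
\end{itemize*}
So it suffices to bound $c(g',h')$ by a function of the form $C'f(D'n)+C''n$. Take a $t$-minimal annular diagram $\Delta$ with $\Lab(\partial_{ext}\Delta)\equiv W_g$ and $\Lab(\partial_{int}\Delta)\equiv W_h$; then $\ell(\partial\Delta)\le n$.

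\textbf{Step 2: case analysis.} By Lemma~\ref{Lem:NoArches}, either all $t$-bands of $\Delta$ are radial, or all are concentric $t$-annuli, and Lemma~\ref{lem:a single concentric} limits the latter to at most one. There are three cases.
\begin{itemize*}
\item If $\Delta$ has a radial band, Lemma~\ref{lem: linear m connecting the boundary} gives a path $m$ joining the two boundary components with $|\Lab(m)|\le n+1$.
\item If $\Delta$ has no $t$-bands, it is a diagram over $F(\calA)$. Conjugate elements of a free group are conjugated by an element of length at most the sum of their lengths, and since $F(\calA)$ is isometrically embedded (Proposition~\ref{Prop:embed}(b)) this gives a conjugator of length $\le n$. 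Alternatively, a path across $\Delta$ can be chosen directly.
\item If $\Delta$ has a single concentric band $\Sigma$, the band separates $\Delta$ into at most three annular pieces: the band itself and two band-free annuli over $F(\calA)$. I apply Lemma~\ref{lem:m in concentric case} to the subdiagram consisting of $\Sigma$. This needs its boundary length to be bounded, which I address in Step~3.
\end{itemize*}
Once such a path $m$ is found, the standard argument applies. If $m$ goes from a vertex of $\partial_{ext}\Delta$ to a vertex of $\partial_{int}\Delta$, then $\Lab(m)$, pre- and post-multiplied by prefixes of the boundary labels (each of length $\le n$), conjugates $g'$ to $h'$. Hence $c(g',h')\le |\Lab(m)|+2n$.

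\textbf{Step 3: the concentric case (main obstacle).} Lemma~\ref{lem:m in concentric case} is stated in terms of $\ell(\partial\Delta)$ for a diagram consisting only of the band, and the sides of $\Sigma$ may be long. Instead I use the proof's key input directly: the rank $i$ of a face of $\Sigma$ is at most $\ell(\partial\Delta)\le n$.
\begin{itemize*}
\item The side labels of $\Sigma$ are conjugate in $F(\calA)$ to $W_g$ and $W_h$ respectively, via the band-free annuli.
\item A $C$-word containing a face of rank $i$ has cyclically reduced form containing the surviving block $x^irx^i$ (Claim~\ref{claim:A blocks survive}), so $i\le n$.
\item The face $\pi$ of rank $i$ contributes the path $z^{f(i)}tz^{f(i)}$, of length $2f(i)+1\le 2f(n)+1$, crossing the band.
\item The band-free annuli on either side contribute conjugators in the free group of length at most the sum of the lengths of the two cyclically reduced words. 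The band side labels may be long, but their cyclically reduced forms are conjugate to $W_g$ and $W_h$, so these pieces cost $O(n)$.
\end{itemize*}
Altogether this gives $c(g,h)\le 2f(n)+O(n)$. Since $f$ is at least linear and strictly increasing, $n\preccurlyeq f$, so $\CL_G\preceq f$.

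The delicate point is checking that the crossing path of $\pi$ actually connects the two boundary components of $\Delta$, or can be joined to them cheaply. The end of the proof of Lemma~\ref{lem:m in concentric case} does this in the case where the band-free annuli are empty. For the general case I would compose free-group conjugators across the band-free annuli with that crossing, choosing the endpoints of these free-group conjugators at the $x$- and $y$-blocks of $\pi$.
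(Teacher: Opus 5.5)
Your reduction via Lemma \ref{Lem:CTM}, the radial/concentric dichotomy, the radial case, and the bound $i\le n$ on the rank of a face of the concentric band all agree with the paper. The gap is the step you flag as delicate and leave unresolved: showing that the crossing path $m\subset\partial\pi$, labelled $z^{f(i)}tz^{f(i)}$, can be connected to $\partial_{ext}\Delta$ and $\partial_{int}\Delta$ at cost $O(n)$. Your justification is that the side labels of $\Sigma$ have cyclically reduced forms conjugate to $W_g$ and $W_h$, ``so these pieces cost $O(n)$''. This is not valid. The length of a free-group conjugator is controlled by the free reductions of the two linear words being conjugated, not by their cyclic reductions. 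For example, $z^{N}xz^{-N}$ and $x$ have the same cyclic reduction, yet every conjugator between them has length at least $N$. The side label of $\Sigma$ read from an endpoint of $m$ contains powers $z^{\pm f(i_k)}$ that can be huge compared to $n$, so as written these pieces are not bounded.

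Two observations close this. First, your three-piece picture is more general than necessary. Every relator of (\ref{Eq:PresG main1 HNN}) contains $t$, so a diagram over the base group $F(\calA)$ has no faces at all. In the concentric case $\Delta$ therefore has no faces outside $\Sigma$, and Lemma \ref{lem:m in concentric case} applies to $\Delta$ itself with $\ell(\partial\Delta)\le n$; this is how the paper argues. What then remains is that the endpoints of $m$ lie on $\partial\Delta$, i.e.\ that $\Sigma$ does not fold onto itself along the $x$- and $y$-edges of $\pi$. The paper gets this from $t$-minimality: an innermost pair of faces of $\Sigma$ sharing an $x$-edge would be adjacent, with intervening side label $z^{f(i)}z^{-f(i')}$. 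That label must be trivial, which forces $i=i'$ and gives a cancellable pair of faces.

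Alternatively, staying in your framework, note that the cut point at $m_-$ is adjacent to a letter of the block $x^irx^i$. That letter survives cyclic reduction by the second part of Claim \ref{claim:A blocks survive}. Hence no cancelling pair of the cyclic reduction straddles the cut, and the side label read from $m_-$ freely reduces to a word of length at most $n$. Only then does the free-group conjugator estimate give $O(n)$, and the same holds at $m_+$. One of these arguments has to be supplied.

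A minor point: in the band-free case the boundary words may contain $t$, so the relevant free group is $F(\calA\cup\{t\})$ rather than $F(\calA)$. Your ``path across $\Delta$'' alternative already covers this.
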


\begin{proof}
    Let $a,b\in G$ be two conjugate elements, of total norm $|a|+|b|\leq n$ for some $n\in \NN$, we show that $c(a,b)\le 5f(n)$.
    
    By Lemma \ref{Lem:CTM}, there exist cyclically $t$-minimal words $W$ and $Z$ over $(\calA\cup \{t\})^{\pm1}$ representing elements $a'\in a^G$ and $b'\in b^G$, respectively, such that $\|W\|\leq |a|$, $c(a,a')\le |a|$, $\|Z\|\leq |b|$, and
    $c(b,b')\le |b|$. In particular, we have that:
    \begin{equation}\label{eq:a+b<=n}
    \|W\|+\|Z\|\leq n ~~\text{and}~~    
    c(a,a')+c(b,b')\leq n.    
    \end{equation}
%We have that $c(a,b)\le c(a,a')+c(a',b')+c(b,b')\le c(a',b')+|a|+|b|\le c(a',b')+n$. It is therefore enough to prove $c(a',b')\le 4f(n)$. 

Let $\Delta$ be a $t$-minimal annular diagram over (\ref{Eq:PresG main1 HNN}), whose boundary labels are $W$ and $Z$; such diagram exists by van Kampen Lemma \ref{Lem:van Kampen}(b).
If $\Delta$ contains no faces, then $a',b'$ are conjugate in the free group $F(\mathcal{A}\cup \{t\})$, which implies $c(a',b')\le n$. Using (\ref{eq:a+b<=n}), it follows that $c(a,b)\leq c(a,a')+c(a',b')+c(b,b')\leq 2n$, which proves the assertion in this case. 
Therefore, we may assume $\Delta$ has faces. 

\emph{Claim: There is a path $m$ connecting the two components of $\partial\Delta$, for which $|\Lab(m)|\le 3f(n)$.}

If $\Delta$ contains a radial $t$-band, Lemma \ref{lem: linear m connecting the boundary} provides a path $m$, connecting the two components of $\partial\Delta$, such that $|\Lab(m)|\leq \ell(\partial\Delta)+1$. Since 
$\ell(\partial\Delta)=\|W\|+\|Z\|\le n$, and since $f$ is strictly increasing and takes only integer values, $n\leq f(n)$, and so the assertion of the claim holds in this case.

Otherwise, since (\ref{Eq:PresG main1 HNN}) satisfies \hyperlink{A}{($\ddag$)} (Lemma \ref{lem:PresG satis ddag}) and the boundary labels of $\Delta$ are cyclically $t$-minimal, Lemma \ref{Lem:NoArches} implies that all $t$-bands in $\Delta$ are concentric. 
By Lemma \ref{lem:a single concentric}, $\Delta$ cannot contain multiple concentric $t$-bands, and we deduce that $\Delta$ consists of a single concentric $t$-band.
It then follows from Lemma \ref{lem:m in concentric case} that there exists a path $m$, connecting the two components of $\partial\Delta$, such that $|\Lab(m)|\leq 2f(\ell(\partial\Delta))+1\leq 2f(n)+1$.
This completes the proof of the claim.

Let $m$ be a path as in the claim and denote $V=\Lab(m)$, so $|V|\leq 3f(n)$. Since $m$ connects the two components of $\partial\Delta$, the words $Z$ and $W$ admit cyclic shifts $Z'$ and $W'$ respectively, such that, up to maybe replacing $V$ by $V^{-1}$:  $$V^{-1}Z'V=_GW'.$$
%It follows that $$c(Z',W')\leq |V|\le 3f(n).$$
By equation (\ref{eq:a+b<=n}), and since $c(W,W')\leq \|W\|$ and $c(Z,Z')\leq \|Z\|$, we get: 
\begin{eqnarray*}
    c(W,Z) & \leq & c(W,W')+c(W',Z')+c(Z,Z') \\
    &\leq & \|W\| + |V| + \|Z\| \\
    &\leq & 3f(n)+n,
\end{eqnarray*}
and therefore, $c(a',b')\leq 3f(n)+n$.
Using (\ref{eq:a+b<=n}) once again, we get: \begin{eqnarray*}
    c(a,b) & \leq & c(a,a')+c(a',b')+c(b,b') \\
    &\leq & |a|+ c(a',b')+|b| \\
    &\leq &3f(n)+2n.
\end{eqnarray*}
Since $n\leq f(n)$, this implies the assertion, completing the proof of the lemma.
\end{proof}

\begin{lem}[Lower bound]\label{Lem:LB}
    $\CL_G\succeq f$. 
\end{lem}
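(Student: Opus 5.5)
To prove $\CL_G\succeq f$, I will use the defining relation itself. The natural test pair is $a=U_l=x^lrx^l$ and $b=V_l=y^lsy^l$. These are conjugate via $W_l=z^{f(l)}tz^{f(l)}$ and have total length $|a|+|b|\le 4l+2$. The plan is to show that every $g\in G$ with $g^{-1}U_lg=V_l$ satisfies $|g|\ge 2f(l)+1$ (or at least $\ge f(l)$). This gives $f(l)\le \CL_G(4l+2)\le \CL_G(6l)$, hence $f\preccurlyeq \CL_G$. Since $f$ was replaced by the strictly increasing $\hat f\sim f$, this is enough.

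The argument I would run is as follows. Fix a shortest conjugator $g$, represented by a geodesic word $P$. Consider the annular diagram obtained from the conjugacy $P^{-1}U_lP=V_l$, and choose a $t$-minimal annular diagram $\Delta$ with boundary labels $U_l$ and $V_l$. Both labels contain no $t$, so they are trivially cyclically $t$-minimal. By Lemma \ref{Lem:NoArches} every $t$-band is then radial or concentric. Radial bands are impossible because the boundaries carry no $t$-edges, so all bands are concentric, and by Lemma \ref{lem:a single concentric} there is at most one. There must be exactly one: $U_l$ and $V_l$ are not conjugate in $F(\calA)$, since one involves $r$ and the other $s$, and $F(\calA)$ embeds by Proposition \ref{Prop:embed}. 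So $\Delta$ consists of an annulus $\Xi_1$ over $F(\calA)$, a single concentric band $\Sigma$, and an annulus $\Xi_2$ over $F(\calA)$. Then $U_l$ is conjugate in $F(\calA)$ to one side label of $\Sigma$, which lies in $C$ (it contains $x,r$), and $V_l$ is conjugate to the $D$-side.

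The key step is to convert this structure into a statement about arbitrary conjugators rather than a particular diagram. I would rather argue algebraically via the HNN structure (Example \ref{Ex:HNN}), using Britton's lemma or equivalently the band analysis. Write $g$ in reduced form $g=g_0t^{\epsilon_1}g_1\cdots t^{\epsilon_k}g_k$. The diagram argument shows that any conjugator decomposes as $g=\alpha\, t\,\beta$ with $\alpha,\beta\in F(\calA)$, where $\alpha^{-1}U_l\alpha=c\in C$ is a conjugate of $C_l$ inside $C$ (by the argument of Claim \ref{claim:conj to C_l}), and $\phi(c)=\beta V_l\beta^{-1}$. The cleanest route is to use the centralizer structure. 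All conjugators are $Z_G(U_l)\,W_l$, and $Z_G(U_l)$ should be $\langle U_l\rangle$, because $U_l$ is not a proper power and its centralizer in the HNN extension cannot involve $t$ by the same single-band analysis. So $g=U_l^kz^{f(l)}tz^{f(l)}$.

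It remains to bound $|U_l^k z^{f(l)} t z^{f(l)}|$ from below. A geodesic word for it contains exactly one $t$, since $t$-content is invariant by the reduced-form theorem. Through a single $t$-band, Proposition \ref{Prop:embed}(b) and Lemma \ref{lem:r-edge split} should give that the length is at least $f(l)$. Concretely, a geodesic $P_1tP_2$ must satisfy $P_1\in U_l^kz^{f(l)}C$ and $P_2\in D z^{f(l)}$. Then $|P_2|\ge|$shortest element of $Dz^{f(l)}|$, and an analogue of Lemma \ref{lem:r-edge split} for $D$ (by symmetry under $\phi$) shows this coset contains no element shorter than about $f(l)$: an element $dz^{f(l)}$ with $d\in D$ either is $z^{f(l)}$ or retains an $s$-block flanked by long $z$-powers.

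I expect this last step to be the main obstacle. The difficulty is the coset estimate $\min_{d\in D}|dz^{f(l)}|\gtrsim f(l)$, which requires checking that $z^{\pm f(i)}$ factors of the generators $D_i$ cannot cancel $z^{f(l)}$ cheaply. For instance, $D_l^{-1}z^{f(l)}=z^{f(l)}y^{-l}s^{-1}y^{-l}$ keeps $z^{f(l)}$ in front, and for $i\ne l$ the $z$-exponents differ. I would mimic the induction of Lemma \ref{lem:r-edge split} to handle this.
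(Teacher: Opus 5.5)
Your endgame is sound, but the step that carries the whole lower bound is not established. That step is that \emph{every} $g$ with $g^{-1}U_lg=V_l$ lies in $F(\calA)\,t\,z^{f(l)}$, i.e.\ $Z_G(U_l)\le F(\calA)$; you only say that $Z_G(U_l)$ ``should be'' $\langle U_l\rangle$.

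The annular-diagram analysis you give cannot prove this. A $t$-minimal annular diagram with boundary labels $U_l,V_l$ is not built from your conjugator $g$. Reading off a path across its single concentric band only produces \emph{some} conjugator with one $t$-letter, which $W_l$ already is. It says nothing about the arbitrary $g$ whose length you must bound from below. Appealing to ``the same single-band analysis'' for the centralizer has the same defect: the annular diagram of a centralizing element need not be $t$-minimal. Nor is the claim formal. $U_l$ is $F(\calA)$-conjugate to $C_l\in C$, so the standard fact that centralizers of base elements not conjugate into the associated subgroups stay in the base group does not apply.

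The claim is true, but it needs an argument that sees $g$ itself. For example, write $g=g_0t^{\epsilon_1}g_1\cdots t^{\epsilon_k}g_k$ without pinches and apply Britton's lemma to $g^{-1}U_lgV_l^{-1}$. The only possible pinch is around $g_0^{-1}U_lg_0$, which cannot lie in $D$. This forces $\epsilon_1=1$ and $h=\phi(g_0^{-1}U_lg_0)\in D$, which is $F(\calA)$-conjugate to $D_l$. If $k\ge 2$, then $\epsilon_2=-1$ and $g_1^{-1}hg_1\in D$. By the $D$-analogue of the first half of Claim~\ref{claim:conj to C_l}, both $h$ and $g_1^{-1}hg_1$ are $D$-conjugate to $D_l$. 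Together with $Z_{F(\calA)}(D_l)=\langle D_l\rangle\le D$, this gives $g_1\in D$, so $tg_1t^{-1}$ is a pinch, a contradiction.

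Once that is supplied, the step you flagged as the main obstacle is immediate. For $d\in D$, $\phi$ is an isometric involution of $F(\calA)$ with $\phi(D)=C$. Lemma~\ref{lem:r-edge split} and the triangle inequality then give $|dz^{f(l)}|=|\phi(d)z^{-f(l)}|\ge|\phi(d)z^{-f(l)}x^{l}|-l\ge f(l)$.

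The paper avoids centralizers and $t$-counting altogether. It takes a shortest pinch-free conjugator $W$ and a $t$-minimal \emph{disk} diagram for $WU_nW^{-1}V_n^{-1}$, whose boundary does contain the conjugator. Absence of pinches and non-crossing of bands force the $t$-band starting at the last $t$-edge of $w$ to end at the last $t$-edge of $w'$. Its side label therefore equals $w_0U_nw_0^{-1}$ in $F(\calA)$ for the $t$-free suffix $w_0$, hence is $F(\calA)$-conjugate to $C_n$. Claim~\ref{claim:conj to C_l} then yields $2\ell(w_0)+2n+1\ge 2f(n)+2n+1$. Only the last $t$-letter of the conjugator matters there, and that gives exactly the control over arbitrary conjugators that your proposal is missing.
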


\begin{proof}
Recall that for $i\in \mathbb{N}$ we denote the words $U_i=x^irx^i$ and $V_i=y^isy^i$, and that for all $i$, the elements represented by $U_i$ and $V_i$ are conjugate in $G$, see (\ref{Eq:PresG main1}).
Fix $n\in \mathbb{N}$ and let $W$ be a word of length $c(U_n,V_n)$ for which $WU_n W^{-1} =_G V_n$. We show that $\|W\|\geq f(n)$.

Recall that a \textit{pinch} is a word of the form $t^{-1}At$ or $tBt^{-1}$ for some words $A\in C$, $B\in D$. Without loss of generality, can assume that $W$ contains no pinches. Indeed, every pinch represents an element of $F(\mathcal A)$ and, therefore, can be replaced with a word in $F(\mathcal A)$ without increasing the length of $W$ by Proposition \ref{Prop:embed} (b).  

Let $\Delta$ be a disk, $t$-minimal van Kampen diagram over (\ref{Eq:PresG main1 HNN}) with boundary label $WU_n W^{-1} V_n^{-1}$. Decompose the boundary of $\Delta$ accordingly, as $\partial\Delta =wp(w')^{-1}q^{-1}$.

Since $U_n$ and $V_n$ are not conjugate in $G_0=F(\calA)$, $W$ must contain a letter $t$ or $t^{-1}$. We consider the $t$-arch $\Sigma$ of $\Delta$ starting at the last $t$-edge of $w$. Denote by $e$ and $e'$ the $t$-edges of $\Sigma$, and by $u$ and $v$ its sides, so that $\partial \Sigma = eue'^{-1}v^{-1}$. Clearly, $e$ and $e'$ cannot both belong to $w$ as otherwise $W$ would contain a pinch. Since $p$ and $q$ contain no $t$-edges, we conclude that $e'$ belongs to $w'$.

Let $w_0$ be the suffix of $w$ starting at $e_+$, and $w_0'$ the suffix of $w'$ starting at $e'_+$. Let $u$ denote the side of $\Sigma$ starting at $e_+$ and ending at $e'_+$, see Fig. \ref{fig:lower bound}.
\begin{figure}\label{fig:lower bound}
  % Requires \usepackage{graphicx}
 \centering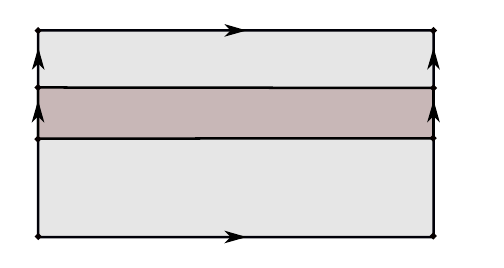\\
  \caption{Proof of Lemma \ref{Lem:LB}}
\end{figure}

By a symmetric argument, the $t$-band $\Sigma'$ containing the last $t$-edge of $w'$ must share a $t$-edge with $w$. Since $t$-bands cannot intersect, we conclude that $\Sigma=\Sigma'$ and $e'$ is the last $t$-edge of $w'$. Since $\Lab(w)\equiv\Lab(w')$, it follows that $\Lab(w_0)\equiv\Lab(w_0')$. By minimality of $\Delta$, the subdiagram bounded by $w_0pw_0^{-1}u^{-1}$ contains no $t$-faces; therefore, we have $$\Lab(u)=_{F(\mathcal A)}\Lab(w_0)\Lab(p)\Lab(w_0)^{-1}.$$ Since  $w_0$ contains no $t$-edges, $\Lab(u)$ is conjugate to $\Lab(p)$ in $F(\calA)$; since $\Lab(p)=x^nrx^n$ is conjugate to $C_n$ in $F(\calA)$, so is $\Lab(u)$. 
By Claim \ref{claim:conj to C_l}, $|\Lab(u)|\geq 2f(n)+2n+1$. 
Therefore,  
$$    
2\ell(w_0)=\ell(w_0)+\ell(w_0') = \ell\Big(w_0p(w_0')^{-1}\Big)-\ell(p) \ge  |\Lab(u)|-(2n+1)\ge  2f(n).
$$
It follows that $\|W\|=\ell(w)\ge \ell(w_0) \geq f(n)$.

By our assumption, $\|W\|=c(U_n,V_n)$ and $\|U_n\|+\|V_n\|=4n+2$. Thus, we obtain $\CL_G(6n)\ge \CL_G(4n+2)\geq f(n)$, which gives $\CL_G \succeq f$, as required.
\end{proof}

\section{$\CL$ and finite extensions} \label{sec:proof of main2}

 This section is devoted to the proof of Theorem~\ref{Thm:main2}. Let $G$ be the group constructed in Section~\ref{sec:proof of main1}, with respect to a prescribed function. We construct an index-two extension of $G$ and prove that it has linear conjugator length function.

 Let $f\colon \NN\to \NN$ be an increasing function. Let $\mathcal{A}$ and $G$ be as defined in Section~\ref{sec:proof of main1}. 
  % \begin{rem}
  %   Since functions are considered up to asymptotic equivalence $\sim$, we may replace $f$ by $\widehat{f}(n) = f(n) + n + 1.$ Hence, up to asymptotic equivalence, we may assume that $f(n) > n$ and that $f$ is strictly increasing. 
  %   \end{rem}
    Observe that the map $\alpha \colon (\calA\cup{t})^{\pm1} \to (\calA\cup{t})^{\pm1}$ defined by
    $\alpha(x)=y,\;\;\alpha(y)=x,\;\; \alpha (r)=s,\;\; \alpha (s)=r,\;\; \alpha(z)=z^{-1},\;\; \alpha(t)=t^{-1}
    $
    extends to an order $2$ automorphism of $G$. 
    
    Inspired by the Collins-Miller construction \cite{CM77}, we let 
    
    $$
    E=G\rtimes_\alpha \langle a \mid a^2=1\rangle.
    $$ 
    
    We use the standard exponential notation $x^y:=y^{-1}xy$.
    
    \begin{lem}\label{Lem:Epres}
       Denote $B=\{r,x,z\}$. The group $E$ admits the presentation: \begin{equation}\label{eq:PresE}    
       \left\langle
             %a,\, b,\, r,\, x,\, z\,
             B\cup \{a,\,b\}
        \left|\,
               a^2=1,\, b^2=1,\, z^a=z^{-1},\, 
             \left[b, (x^irx^i)^{z^{f(i)}}\right]=1\,
         \right.\right\rangle.
         \end{equation}
    \end{lem}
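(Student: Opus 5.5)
We will prove that the group $E'$ given by (\ref{eq:PresE}) is isomorphic to $E$ by writing down homomorphisms in both directions and checking that they are mutually inverse. The guiding identifications are $a\mapsto a$ and $b\mapsto ta$. Conversely, $y$, $s$, $t$ should be recovered in $E'$ as $y=axa$, $s=ara$, $t=ba$. First note that $(ta)^2=t\alpha(t)=tt^{-1}=1$ in $E$. Also $a z a=\alpha(z)=z^{-1}$, so the relations $a^2=1$, $b^2=1$ and $z^a=z^{-1}$ are consistent with these identifications.

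\emph{Step 1: a homomorphism $\Psi\colon E'\to E$.} Define $\Psi$ by $r,x,z,a\mapsto r,x,z,a$ and $b\mapsto ta$. The first three relations hold in $E$ by the remarks above. For the commutator relations, recall from the derivation of (\ref{Eq:PresG main1 HNN}) that $(x^irx^i)^{z^{f(i)}}=C_i$ as words. We compute
$$(ta)^{-1}C_i(ta)=a\,t^{-1}C_it\,a=aD_ia=\alpha(D_i)=C_i,$$
using the HNN relation and $\alpha(D_i)=\phi(D_i)=C_i$. Here the two maps agree on $x,y,r,s,z$. Hence $[b,C_i]\mapsto 1$.

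\emph{Step 2: a homomorphism $\Phi\colon E\to E'$.} Use the presentation of $E$ as a semidirect product: the generators of $G$ together with $a$, the relations (\ref{Eq:PresG main1 HNN}), $a^2=1$, and $a g a=\alpha(g)$ for the generators $g$ of $G$. Send $x,r,z,a$ to themselves and set $y\mapsto axa$, $s\mapsto ara$, $t\mapsto ba$. The conjugation relations then hold:
\begin{itemize*}
\item $axa=y$ and $ara=s$ hold by definition, and $aya=x$, $asa=r$ follow from $a^2=1$;
\item $aza=z^{-1}$ is a relation of $E'$;
\item $a(ba)a=ab=(ba)^{-1}$ because $a^2=b^2=1$.
\end{itemize*}
For the HNN relations, the images of $D_i$ equal $aC_ia$, since conjugation by $a$ realizes $\alpha$ on $x,r,z$ in $E'$. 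Then
$$(ba)^{-1}C_i(ba)=a\,bC_ib\,a=aC_ia,$$
because $b$ commutes with $C_i$. This is exactly the image of $D_i$, so $\Phi$ is well defined.

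\emph{Step 3: the compositions are the identity.} On generators, $\Phi\Psi(b)=\Phi(ta)=baa=b$. Also $\Psi\Phi(t)=taa=t$, $\Psi\Phi(y)=axa=\alpha(x)=y$, and similarly for $s$; the remaining generators are fixed. Hence both compositions are the identity and the two groups are isomorphic.

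The only mildly delicate point is the bookkeeping for the HNN relation, in particular its orientation. We must check that $\alpha$ really inverts $t$ compatibly with $t^{-1}C_it=D_i$, so that $\alpha$ is an automorphism of $G$; the paper asserts this. We must also check that $\alpha(D_i)=C_i$ letter by letter, using $z\mapsto z^{-1}$: indeed $\alpha(z^{f(i)}y^isy^iz^{-f(i)})=z^{-f(i)}x^irx^iz^{f(i)}=C_i$. Everything else is routine Tietze-type verification.
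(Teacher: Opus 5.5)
Your proof is correct and is essentially the paper's argument. The paper performs the same identifications ($y=x^a$, $s=r^a$, $b=ta$, $t=ba$, and $C_i^t=C_i^a \Leftrightarrow [ta,C_i]=1$) as a sequence of Tietze transformations; you package them as explicit mutually inverse homomorphisms and verify each relation directly, which makes checks such as $\alpha(D_i)=C_i$ and $(ta)^2=1$ explicit.
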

    
    \begin{proof}
    The proof uses Tietze transformations. By definition, the group $E$ is generated by the elements $x,y,z,r,s,t,a$ subject to the following relations:
    \begin{equation}\label{Eq:rel1}
    t^{-1}\underbrace{z^{-f(i)}x^irx^iz^{f(i)}}_{C_i}t = \underbrace{z^{f(i)}y^isy^iz^{-f(i)}},
    \end{equation}
    (a cyclic permutation of $w_i^{-1}u_iw_i=v_i$) and
    \begin{equation}\label{Eq:rel3}
    a^2=1,\;\; x^a=y, \;\; y^a=x, \;\; r^a=s,\;\; s^a=r,\;\; z^a=z^{-1}, \;\; t^a=t^{-1}.
    \end{equation}
    Using (\ref{Eq:rel3}), we can  eliminate the redundant generators $y$ and $s$ and rewrite (\ref{Eq:rel1}) in the form
    $$
    \left(z^{-f(i)}x^irx^iz^{f(i)}\right)^t = \left( z^{-f(i)}x^irx^iz^{f(i)}\right)^ a.
    $$
    Conjugating both sides by $a=a^{-1}$ and introducing a new generator $b=ta$, we obtain the relation $[b, z^{-f(i)}x^irx^iz^{f(i)}]=1$. It remains to omit the redundant generator $t=ba$ and rewrite the last relation in (\ref{Eq:rel3}) in the form $b^2=1$.
    \end{proof}

Since every relator in~\eqref{eq:PresE} contains exactly two occurrences of letters from $\{a,b\}^{\pm1}$, the presentation~\eqref{eq:PresE} is an $\{a,b\}$--bilateral presentation for $E$. Hence the band machinery of Section~\ref{sec:bilateral pres} applies to van Kampen diagrams over~\eqref{eq:PresE}. We record the conventions specific to this presentation. An $a$--face is either an $a$--bigon, corresponding to the relation $a^2=1$, or a non-bigon face corresponding to the relation $z^a=z^{-1}$. In the latter case, the two sides of the face are the opposite $z$--edges, with labels $z$ and $z^{-1}$.

Similarly, a $b$--face is either a $b$--bigon, corresponding to the relation $b^2=1$, or a non-bigon face corresponding to one of the relations involving
\[
C_i\equiv z^{-f(i)}x^irx^iz^{f(i)}.
\]
For such a non-bigon $b$--face,  $i \in \mathbb{N}$ is called its \emph{rank}, and we often denote the face by $\pi_i$. As in Section~\ref{sec:proof of main1}, let $c_i$ be the element of $F(B)$ represented by $C_i$, and set $C=\langle c_1,c_2,\dots\rangle.$ Note that $C$ is freely generated by $\{c_i\mid i\in\mathbb N\}$, see Lemma~\ref{lem:C free}.

\begin{lem}\label{lem: E ddag}
The presentation~\eqref{eq:PresE} satisfies \hyperlink{A}{($\dag$)} and \hyperlink{A}{($\ddag$)} as an $\{a,b\}$--bilateral presentation.
\end{lem}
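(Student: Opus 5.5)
Since (\ddag) implies (\dag), it is enough to establish (\ddag). The plan is to describe exactly which pairs of words can be side labels of an $\{a,b\}$-band over~\eqref{eq:PresE}, and then check that the two side labels always have the same norm in the base group $E_0=F(B)$. The base group is $F(B)=F(r,x,z)$, because the only relators without $a^{\pm1},b^{\pm1}$ are absent. Every relator of~\eqref{eq:PresE} involves only one of $a$ or $b$. So two consecutive faces of a band share an $a$-edge or a $b$-edge, and hence have the same letter type. Consequently every $\{a,b\}$-band is either an $a$-band, made only of $a$-faces, or a $b$-band, made only of $b$-faces.

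For an $a$-band, each face is either an $a$-bigon, whose sides are empty, or a face of $z^a=z^{-1}$, whose sides are labelled $z^{\pm1}$ and $z^{\mp1}$. The side labels are therefore $U\equiv z^{\epsilon_1}\cdots z^{\epsilon_n}$ and $V\equiv z^{-\epsilon_1}\cdots z^{-\epsilon_n}$, with empty letters deleted. The two labels have the form $U=z^k$ and $V=z^{-k}$ in $F(B)$, so $|U|_B=|V|_B=|k|$.

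For a $b$-band, each face is either a $b$-bigon with empty sides, or a face of $[b,C_i]=1$. Written as $b^{-1}C_ibC_i^{-1}$, the latter has both sides labelled $C_i^{\pm1}$, with the same sign, once orientations are read consistently along the band. Hence the two side labels of a $b$-band are literally the same word, $C_{i_1}^{\pm1}\cdots C_{i_n}^{\pm1}$, with bigons contributing nothing. This case is therefore trivial: $U\equiv V$.

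The only real care needed is the orientation bookkeeping. I would verify, using the decomposition $\partial\pi_i=e_{i-1}u_ie_i^{-1}v_i^{-1}$ from Definition~\ref{def:t-bands}, that for the commutator relator the two sides $u_i,v_i$ indeed carry equal labels. For $z^a=z^{-1}$, written as $a^{-1}zaz$, the sides carry mutually inverse $z$-letters. I would also confirm that bigon faces contribute empty sides. With these checks done, (\ddag) follows, and hence (\dag).
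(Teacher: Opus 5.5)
Your proposal is correct and follows essentially the same route as the paper. You reduce to ($\ddag$), note that bigons contribute empty sides, that non-bigon $a$-faces carry $z^{\pm1}$ and $z^{\mp1}$ on their two sides (the paper phrases this via the isometric automorphism $z\mapsto z^{-1}$ of $F(B)$, equivalent to your $z^{k}$ versus $z^{-k}$ computation), and that non-bigon $b$-faces carry the same $C_i^{\pm1}$ on both sides; your explicit observation that a band cannot mix $a$-faces and $b$-faces is a point the paper leaves implicit.
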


\begin{proof}
It is enough to verify \hyperlink{A}{($\ddag$)}, since \hyperlink{A}{($\ddag$)} immediately implies \hyperlink{A}{($\dag$)}.

Let $U$ and $V$ be $a$--conjugate words over $B^{\pm1}$. Then there is an $a$--band $\Sigma$ over~\eqref{eq:PresE} with side labels $U$ and $V$. The $a$--bigons contribute the empty word to the side labels. Each non-bigon $a$--face has side labels $z$ and $z^{-1}$. Therefore, the two side labels of $\Sigma$ are related by the automorphism of $F(B)$ which sends $z$ to $z^{-1}$ and fixes the remaining letters of $B$. This automorphism is an isometry with respect to the word metric induced by $B$. Hence $|U|_B=|V|_B$.

Now let $U$ and $V$ be $b$--conjugate words over $B^{\pm1}$, and let $\Sigma$ be the corresponding $b$--band. Again, the $b$--bigons contribute the empty word to the side labels. Each non-bigon $b$--face has the same $B$--label, namely $C_i$ for some $i\in\mathbb N$, on its two sides. Thus, after orienting the sides of $\Sigma$ compatibly, both side labels are products of the same sequence of words $C_i^{\pm1}$. Hence, the two side labels are freely equal in $F(B)$, so $|U|_B=|V|_B$.

Thus~\eqref{eq:PresE} satisfies \hyperlink{A}{($\ddag$)}, and therefore also satisfies \hyperlink{A}{($\dag$)}.
\end{proof}
    
Therefore, the general results of Section~\ref{sec:bilateral pres} apply to annular diagrams over~\eqref{eq:PresE}. In particular, since~\eqref{eq:PresE} satisfies \hyperlink{A}{($\ddag$)}, Proposition~\ref{Prop:embed} implies that the base group $F(B)$ embeds isometrically into $E$. From now on, we identify $F(B)$ with its image in $E$. For the remainder of this section, $|\cdot |$ denotes word length in $E$ with respect to the generating set $B \cup \{a,b\}$. Since $F(B)$ embeds isometrically in $E$, we have $|W| = |W|_B$ for every word $W$ over $B^{\pm 1}$.

 We restrict attention to $\{a,b\}$--minimal diagrams, that is, diagrams containing the minimum possible total number of $a$-- and $b$--faces among all diagrams with the same boundary labels and homotopy type, see Definition~\ref{def: T minimal van Kampen diagrams}.

 We call a diagram $\Delta$ \emph{strongly $\{a,b\}$--minimal} if it has the minimum possible total number of $a$-- and $b$--faces among all diagrams of the same homotopy type with the same boundary labels, and subject to this condition, has the minimum possible number of non--bigon $b$--faces. As the name suggests, every strongly $\{a,b\}$--minimal diagram is $\{a,b\}$--minimal, but not vice versa. For example, see the bottom left diagram in Figure~\ref{fig:shared-r-edge}, which is $\{a,b\}$-minimal but not strongly $\{a,b\}$-minimal. Indeed, it may be replaced by the diagram on the lower right, which consists of two $b$-bigons.

 % \gil{Maybe (of course, not necessary) add something like: as the name suggests, every strongly {a,b}-minimal diagram is {a,b}-minimal, but not vice versa. For example, see the diagram in figure ** (include a figure), which is {a,b}-minimal but not strongly {a,b}-minimal. Indeed, it may be replaced by... which consists of 2 $b$-bigons.}

\begin{lem} \label{lem:shared-r- edge}
   Let $\Delta$ be a strongly $\{a,b\}$--minimal annular diagram over \eqref{eq:PresE} whose boundary labels are cyclically $\{a,b\}$--minimal. Let $\pi$ and $\pi'$ be $b$--faces sharing an $r$--edge $e_r$. Then:
   \begin{enumerate}
       \item[(a)] $\pi$ and $\pi'$ have different ranks.
       \item[(b)] If $\pi$ and $\pi'$ belong to the same maximal radial $b$--band, then the two occurrences of $e_r$ lie on opposite sides of that band.
   \end{enumerate}
\end{lem}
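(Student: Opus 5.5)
Both parts rest on one reduction. If $\pi$ and $\pi'$ have the same rank and sit across $e_r$ in mirror fashion, then the pair can be removed or rewritten, contradicting strong $\{a,b\}$--minimality. This is the analogue of Lemma~\ref{lem:shared r edges} for the new presentation. The new feature is that the relator of a non-bigon $b$--face is $b C_i b^{-1} C_i^{-1}$, so both sides carry the same label $C_i$, and each such face contains \emph{two} $r$--edges, one on each side.

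For (a), suppose $\pi$ and $\pi'$ both have rank $i$. Write $\partial\pi = e_r q$ and $\partial\pi' = e_r^{-1} l$, read clockwise. The label of $\partial\pi$ contains $r$ exactly once positively and once negatively, namely $r$ on one side and $r^{-1}$ on the other. So the letter read on $e_r$ fixes which side of $\pi$ contains $e_r$, and similarly for $\pi'$.

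\begin{itemize}
\item \textbf{Mirror case.} Here $\Lab(q)\equiv\Lab(l)^{-1}$, and we fold $\pi\cup\pi'$ away exactly as in Lemma~\ref{lem:shared r edges}. This lowers the number of $b$--faces by two, contradicting $\{a,b\}$--minimality.
\item \textbf{Non-mirror case.} Here $e_r$ is the $r$ of side $C_i$ in $\pi$ and the $r^{-1}$ of $C_i^{-1}$ in $\pi'$. Because the two sides of a rank-$i$ face carry identical labels, the union $\pi\cup\pi'$ is a disk subdiagram with two $b$--edges on each end. Its boundary label is of the form $bPb^{-1}\ldots$, where the middle words cancel freely. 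We replace $\pi\cup\pi'$ by two $b$--bigons, glued along a free-reduction (zero-face) diagram for the cancelling $B$--words. This keeps the total number of $\{a,b\}$--faces the same but removes two non-bigon $b$--faces, contradicting strong minimality. This is precisely the situation of Figure~\ref{fig:shared-r-edge}.
\end{itemize}

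Checking that the replacement is a legitimate planar diagram with the same boundary is the main technical step. I would verify it directly by writing out the boundary label of $\pi\cup\pi'$ in both cases.

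For (b), suppose $\pi$ and $\pi'$ lie in the same maximal radial $b$--band $\Sigma$, and the two occurrences of $e_r$ lie on the same side of $\Sigma$. That side then contains a subpath $e_r u e_r^{-1}$, and we first shrink $u$ so that it contains no $r$--edges. Each face contributes exactly one $r$--edge to each side. Hence $\pi$ and $\pi'$ are consecutive in $\Sigma$ and share a $b$--edge $e_b$.

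Next we follow the proof of Lemma~\ref{lem: r edge cases}. Write $\partial\pi=e_rq_1e_bq_2$ and $\partial\pi'=l_2e_b^{-1}l_1e_r^{-1}$. One of the loops $q_jl_j$ contains at most one $\{a,b\}$--edge and, since $\Sigma$ is radial, bounds a disk. So it bounds a disk subdiagram $\Xi$. A maximal band in $\Xi$ would be a $0$-homotopic annulus, impossible by Lemma~\ref{Lem:0-hom}; also, an $a$--band cannot end on a path without $a$--edges. Hence $\Xi$ is a diagram over $F(B)$, and the free equality of the labels forces $\pi$ and $\pi'$ to have the same rank. This contradicts (a).

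The care needed here is to track which side of each face $e_r$ sits on, so that the cancelling portion is indeed $q_j l_j$ with $q_j$ free of $\{a,b\}$--letters. It also uses the fact that $a$--faces carry no $r$--edges, so they cannot interfere.
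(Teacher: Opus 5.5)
Your part (a) is the paper's argument. The mirror configuration folds away. In the other configuration $\Lab(ql)$ cyclically reduces to $b^2C_ib^{-2}C_i^{-1}$, so the pair can be traded for two $b$--bigons, contradicting strong minimality. Part (b) follows the paper's outline, but two steps do not hold as written.

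\textbf{First gap: shrinking $u$.} The step ``shrink $u$ so that it contains no $r$--edges'' presupposes that every $r$--edge on the closed path $u$ is traversed by $u$ twice, in opposite directions. Only then does it give a nested pair of faces of $\Sigma$ sharing an $r$--edge on the same side. At that point nothing rules out an $r$--edge traversed only once, for example if $u$ encircled faces or the hole of $\Delta$. The paper proves this first: the graph traced by $u$ is a tree. Indeed, a simple cycle in it carries no $a$-- or $b$--edges. It cannot separate the boundary components, since the part of the radial band $\Sigma$ beyond it would need a $b$--edge on the cycle. Hence it bounds a disk subdiagram, which has no faces by Lemma~\ref{Lem:0-hom} because $F(B)$ has no relators. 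Only then does an innermost choice of $(\pi,\pi')$ yield $u$ free of $r$--edges. Your later loop argument uses exactly these ingredients, but it has to come before the reduction, not after.

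\textbf{Second gap: bigons between $\pi$ and $\pi'$.} The claim ``each face contributes exactly one $r$--edge to each side'' is false for $b$--bigons, whose side paths are empty. So $\pi$ and $\pi'$ are only successive among the ranked faces of $\Sigma$ and need not share a $b$--edge. Your decompositions $\partial\pi=e_rq_1e_bq_2$ and $\partial\pi'=l_2e_b^{-1}l_1e_r^{-1}$ are then unavailable. You need the observation that two $b$--bigons cannot share a $b$--edge: their union has boundary label $bb^{-1}$, so removing it contradicts minimality. Hence at most one bigon separates $\pi$ and $\pi'$. Since its sides are empty, $u$ is still the concatenation $u_1(u_1')^{-1}$ of the side arcs of $\pi$ and $\pi'$ running from $e_r$ to the facing $b$--edges. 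Up to inversion, $\Lab(u_1)$ is $z^{-f(i)}x^{i}$ or $x^{i}z^{f(i)}$, and $\Lab(u_1')$ has the same form with $j$. Then $\Lab(u_1)=_{F(B)}\Lab(u_1')$ forces $i=j$, contradicting (a). With these two repairs your argument becomes the paper's.
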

\begin{proof}
    Reading the boundaries of $\pi$ and $\pi'$ clockwise, write $\partial\pi = e_rq$ and $\partial\pi' = e_r^{-1}l$, and let $\Xi$ be the disk subdiagram bounded by $ql.$ Suppose that $\pi$ and $\pi'$ have the same rank $i.$ Since the boundary label of each rank $i$ face is a cyclic shift of $[b, C_i]^{\pm 1}$, there are, up to inversion and cyclic shift, two possible ways to identify their $r$--edges. 

     % Depending on which side of each face contains $e_r$, one of the two configurations in Figure~\ref{fig:shared-r-edge} occurs. 

    In the first case, $\Lab(q)\equiv\Lab(l^{-1})$, so $\Xi$ may be removed and $q$ identified with $l^{-1}$. This contradicts $\{a,b\}$--minimality. In the second case, $\Lab(ql)$ freely reduces, up to cyclic shift and inversion, to $ b^2C_ib^{-2}C_i^{-1}.$ Replacing $\Xi$ by two $b$--bigons whose base vertices are joined by a path labeled $C_i$ preserves the number of $\{a,b\}$--faces and removes two ranked $b$--faces. This contradicts strong $\{a,b\}$--minimality. Therefore, $\pi$ and $\pi'$ have different ranks.
    
    For part~(b), first note that two $b$--bigons cannot share a $b$--edge, since removing them and identifying their remaining $b$--edges would produce a diagram with the same boundary labels and fewer $\{a,b\}$--faces. Hence two successive ranked $b$--faces in a maximal $b$--band either share a $b$--edge or are separated by one $b$--bigon. 
    
    Suppose now that $\pi$ and $\pi'$ belong to a maximal $b$--band $\Sigma$ and share an $r$--edge $e_r$, both occurrences of which lie on the same side of $\Sigma$. Choose $\pi$ and $\pi'$ to be an innermost such pair. After reversing the orientation if necessary, that side contains a subpath $e_rue_r^{-1}$. 

    Let $\Gamma$ be the subgraph traced by $u.$ We claim that $\Gamma$ is a tree. Otherwise, $\Gamma$ contains a simple cycle $v.$ Since $\Gamma$ is traced by a subpath of a side of $\Sigma,$ the cycle $v$ contains no $a$-- or $b$--edges. If $v$ separated the boundary components of $\Delta,$ then the terminal $b$--subband of $\Sigma$ in the annular subdiagram between $v$ and $\partial_{int}\Delta$ would have a $b$--edge on $v,$ which is impossible. Hence $v$ bounds a disk subdiagram $\Xi.$ Any $a$-- or $b$--face in $\Xi$ would belong to a maximal band with no end on $\partial\Xi$, and hence to a $0$--homotopic annulus, contrary to Lemma~\ref{Lem:0-hom}. Thus $\Xi$ contains no faces and is a tree, contradicting the existence of a cycle $v.$ Therefore, $\Gamma$ is a tree. 

    % Note that the loop $u$ is $0$--homotopic in $\Delta.$ Otherwise, $u$ would surround the central hole of $\Delta.$ Since $u$ lies on a side of $\Sigma$, its label contains no $a$-- or $b$--letters. Since $\Sigma$ is radial, a maximal $b$--subband of $\Sigma$ lying on the inner side of $u$ would then have an end $b$--edge on $u,$ which is impossible since $u$ contains no $b$--edges. Hence $u$ bounds a disk subdiagram $\Xi.$ Moreover, no maximal $a$-- or $b$-- band in $\Xi$ meets $\partial \Xi.$ Each would therefore be a $0$--homotopic annulus, contrary to Lemma~\ref{Lem:0-hom}. Consequently, $\Xi$ is a diagram over $F(B)$, so it contains no faces. Thus $\Xi$ is a tree. 
    
    If $u$ contained an $r$--edge, then, as a closed path in this tree, it would traverse the same edge again in the opposite direction. An innermost such pair of occurrences would correspond to two ranked $b$--faces sharing an $r$--edge on the same side of $\Sigma$. The subpath between them is properly contained in $u,$ contradicting the choice of $\pi$ and $\pi'.$ Therefore $u$ contains no $r$--edges. Every ranked $b$--face strictly between $\pi$ and $\pi'$  contributes an $r$--edge to this side of $\Sigma.$ Hence there are no such faces, so $\pi$ and $\pi'$ are successive ranked faces of $\Sigma$. They therefore either share a $b$--edge or are separated by one $b$--bigon.

    Let $\pi$ and $\pi'$ have ranks $i$ and $j$ respectively, and let $u_1$ and $u_1'$ be the portions of the sides from the corresponding occurrences of $e_r$ to the facing $b$--edges. The intervening $b$--bigon, if present, contributes an empty side path, so $u = u_1u_1'^{-1}.$ Since $u$ is a closed path in the tree $\Gamma,$   Lemma~\ref{Lem:van Kampen}(a) gives $\Lab(u_1)=_{F(B)}\Lab(u_1').$ Up to inversion, $\Lab(u_1)\equiv z^{-f(i)}x^{ i}$ or $x^{i}z^{f(i)}$, and $\Lab(u_1')\equiv z^{-f(j)}x^{j}$ or $x^{ j}z^{f(j)}$.  Comparing freely reduced words gives $i=j$, contradicting part~(a).
\end{proof}

As in Section~\ref{sec:proof of main1}, given an annular diagram $\Delta$, we denote the total length of its boundary components by $\ell(\partial \Delta) = \ell(\partial_{ext}\Delta) + \ell(\partial_{int}\Delta).$

\begin{lem}\label{lem: radial b bands}
Let $\Delta$ be a strongly $\{a,b\}$--minimal annular diagram over \eqref{eq:PresE} whose boundary labels are cyclically
$\{a,b\}$--minimal. Suppose that $\Delta$ contains a radial $b$--band. Then there exists a path $m$ connecting the two boundary components of $\Delta$, such that $$|\Lab(m)| \leq \ell(\partial \Delta) + 1.$$
\end{lem}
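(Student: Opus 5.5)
The plan is to mimic the proof of Lemma~\ref{lem: linear m connecting the boundary}, replacing the $t$-bands there by radial $b$-bands and using Lemma~\ref{lem:shared-r- edge} in place of Lemmas~\ref{lem:shared r edges} and~\ref{lem: r edge cases}. Let $\Sigma$ be a radial $b$-band in $\Delta$. The $b$-bigons contribute empty side paths, and each ranked face of rank $i$ contributes $C_i^{\pm1}$ to both sides of $\Sigma$. By strong minimality, consecutive ranked faces of $\Sigma$ do not cancel; otherwise we could remove them or replace them by bigons. Hence each side label is a reduced word in $\{C_i^{\pm1}\}$, and Claim~\ref{claim:A blocks survive} shows that every $r$-letter of a side survives free reduction. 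Lemma~\ref{lem:r-edge split} then applies to the prefixes and suffixes of a side cut at an $r$-edge. The main difference from Section~\ref{sec:proof of main1} is that both sides of a $b$-band carry $C$-words. Either side may therefore serve as the path $m$, and an $r$-edge of a side is either on $\partial\Delta$, or shared with another $b$-face. That other face lies either in a different maximal $b$-band, or in $\Sigma$ itself on the opposite side, by Lemma~\ref{lem:shared-r- edge}(b).

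I would split into cases exactly as before. \emph{Case 1:} some $r$-edge $e_r$ on a side $u=u_1e_ru_2$ of $\Sigma$ is shared with a face $\pi'$ of rank $j\neq i$ (Lemma~\ref{lem:shared-r- edge}(a)), and $\pi'$ lies on a side $u'=u_2'e_r^{-1}u_1'$ of a radial $b$-band. This band could be $\Sigma$ itself, traversed along its other side. As in Case 1 of Lemma~\ref{lem: linear m connecting the boundary}, the reduced form of $\Lab(u_1)$ ends with $z^{\pm1}x^{\pm i}$ or $r^{\pm1}x^{\pm2i}$, and similarly for $\Lab(u_1')$. So at most $2\min\{i,j\}$ letters cancel from each word in $\Lab(u_1u_1')$, and this gives $|\Lab(u_1)|\le|\Lab(u_1u_1')|$ and $|\Lab(u_2)|\le |\Lab(u_2'u_2)|$. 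The paths $u_1u_1'$ and $u_2'u_2$ have endpoints on $\partial\Delta$ and cobound disk subdiagrams with boundary subpaths $w_1,w_2$. Since $u$ is radial, $w_1$ and $w_2$ can be taken to lie on disjoint portions of $\partial\Delta$, one on each component. Therefore $|\Lab(u)|-1\le \ell(w_1)+\ell(w_2)\le\ell(\partial\Delta)$, and we take $m=u$. I must also check that a shared $r$-edge cannot lead into an $a$-face or a $b$-arch. $a$-faces carry only $z$-edges, and $b$-arches are excluded by Lemma~\ref{Lem:NoArches}. $a$-bands may cross between $b$-bands, but they only touch $z$-edges, so they do not affect the $r$-edge analysis.

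\emph{Case 2:} every $r$-edge of one side $u$ of $\Sigma$ lies on $\partial\Delta$. Here I reproduce the three-subcase argument of Case 2 of Lemma~\ref{lem: linear m connecting the boundary}. Suppose the first ranked face has its $r$-edge on $\partial_{int}\Delta$, or the last one has it on $\partial_{ext}\Delta$. Then the initial segment $z^{-f(l)}x^{\pm l}$ connects the components, and by Lemma~\ref{lem:r-edge split} its length is at most $|\Lab(w)|\le \ell(\partial\Delta)$. Otherwise there are consecutive ranked faces whose $r$-edges switch from $\partial_{ext}\Delta$ to $\partial_{int}\Delta$. The segment $m$ between these two $r$-edges is labelled $x^{\pm l}z^{f(l)}z^{-f(l')}x^{\pm l'}$, possibly with an empty bigon contribution. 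Its length is bounded by $|\Lab(u_1)|+|\Lab(u_2)|\le\ell(\partial\Delta)$, again by Lemma~\ref{lem:r-edge split}. Together with Case 1, this covers all possibilities.

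I expect the main obstacle to be Case 1 when $\pi'$ lies in $\Sigma$ itself on the opposite side. One then has to verify that the subpaths $w_1,w_2$ of $\partial\Delta$ obtained by cutting at $e_r$ still go to different boundary components and do not overlap, so that their lengths add up to at most $\ell(\partial\Delta)$. A second delicate point is the free-reduction bookkeeping at the junction when a $b$-bigon sits between ranked faces. I would first handle this by noting that bigons contribute empty side paths, so the junction words are exactly those of Section~\ref{sec:proof of main1}.
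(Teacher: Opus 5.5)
Your proposal is correct and follows the paper's proof essentially step for step. The paper uses the same split: either some $r$-edge of a side is interior, in which case $\Sigma'$ is another radial band (via Lemma~\ref{Lem:NoArches}) or $\Sigma$ itself on the opposite side (via Lemma~\ref{lem:shared-r- edge}(b)), or all $r$-edges lie on $\partial\Delta$; the cancellation estimate and the three-subcase argument are imported from Lemma~\ref{lem: linear m connecting the boundary}, exactly as you do. The one omission is the degenerate case, which the paper treats first: if $\Sigma$ has no ranked faces, a side of $\Sigma$ is a length-zero path joining the two boundary components, and your Case~2 subcases, which presuppose a first and last ranked face, do not literally cover it.
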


\begin{figure}
  % Requires \usepackage{graphicx}
 \centering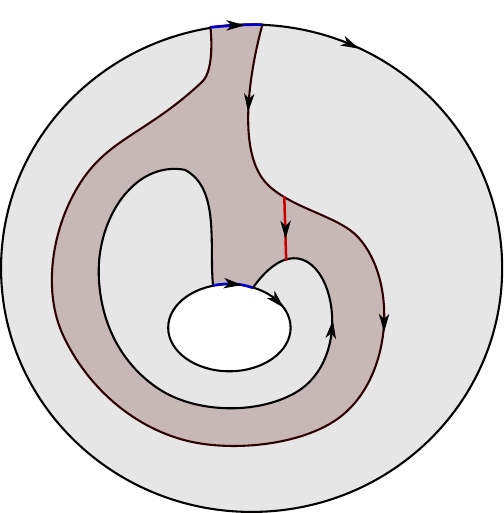\\
  \caption{Case 1 of Lemma \ref{lem: radial b bands}}
\end{figure}

\begin{proof}
Fix a radial $b$--band $\Sigma$ in $\Delta$, and let $u$ be one of its sides. If $u$ contains no $r$--edges, then every face of $\Sigma$ is a $b$--bigon. Hence $u$ has length zero and joins the two boundary components of $\Delta,$ so we may take $m = u.$ Assume, therefore, that $u$ contains an $r$--edge.

 \emph{Case 1. Some $r$--edge on $u$ does not belong to $\partial\Delta.$} 
 Let $e_r$ be such an edge, and let $\pi$ be the face of $\Sigma$ incident to $e_r.$ Since $e_r$ is an interior edge, it is also incident to a second ranked $b$--face $\pi'.$ Suppose first that $\pi'\notin\Sigma$, and let $\Sigma'$ be the maximal $b$--band containing $\pi'.$ Since $\Delta$ contains the radial $b$--band $\Sigma$ and its boundary labels are cyclically $\{a,b\}$--minimal, Lemma~\ref{Lem:NoArches} implies that $\Sigma'$ is radial.

 Now suppose that $\pi' \in \Sigma.$ By Lemma~\ref{lem:shared-r- edge}(b), the two occurrences of $e_r$ lie on opposite sides of $\Sigma.$ In this case, set $\Sigma'=\Sigma.$

 Thus, in either case, there is a side $u'$ of the radial $b$--band $\Sigma'$ on which $e_r$ occurs with the opposite orientation. Write
\[
u=u_1e_ru_2
\quad\text{and}\quad
u'=u_2'e_r^{-1}u_1'.
\]
Let $w_1$ and $w_2$ be the subpaths of the two boundary components of $\Delta$ such that the loops
\[
u_1u_1'w_1^{-1}
\quad\text{and}\quad
u_2'u_2w_2^{-1}
\]
bound disk subdiagrams. Hence
\[
\Lab(u_1u_1')=_E\Lab(w_1)
\quad\text{and}\quad
\Lab(u_2'u_2)=_E\Lab(w_2).
\]

Let $i$ and $j$ be the ranks of $\pi$ and $\pi'$, respectively. By Lemma~\ref{lem:shared-r- edge}(a), $i \neq j.$  Since the side labels of $b$--bands are products of  words $C_k^{\pm1}$, the factors meeting in  $u_1u_1'$ and $u_2'u_2$  have exactly the same form as those considered in Case~1 of Lemma~\ref{lem: linear m connecting the boundary}. The cancellation estimates proved there therefore give
\[
|\Lab(u_1)|\leq|\Lab(u_1u_1')|
\quad\text{and}\quad
|\Lab(u_2)|\leq|\Lab(u_2'u_2)|.
\]
Since $F(B)$ embeds isometrically in $E$, it follows that
\begin{align*}
|\Lab(u)|-1
    &=|\Lab(u_1)|+|\Lab(u_2)|\\
    &\leq|\Lab(u_1u_1')|+|\Lab(u_2'u_2)|\\
    &=|\Lab(w_1)|+|\Lab(w_2)|\\
    &\leq\ell(w_1)+\ell(w_2)\\
    &\leq\ell(\partial\Delta).
\end{align*}
Therefore,
\[
|\Lab(u)|\leq\ell(\partial\Delta)+1.
\]
Taking $m=u$ proves the assertion in this case.
 
\emph{Case 2. Every $r$--edge of $u$ belongs to $\partial\Delta$.}
 Note that $u$ has exactly the same form as the $C$--side of the radial $t$--band considered in Case~2 of Lemma~\ref{lem: linear m connecting the boundary}. The argument from that case therefore applies without change and produces a subpath $m$ of $u$ joining the two boundary components of $\Delta$ such that $|\Lab(m)|\leq \ell(\partial\Delta).$
\end{proof}

We now prove the upper bound for the conjugator length function of $E$.
 \begin{lem}[Upper bound]\label{lem:E upper bound} $\CL_E(n) \preceq n.$
\end{lem}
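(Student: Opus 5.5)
We follow the scheme of Lemma~\ref{lem:upper bound}, now over the $\{a,b\}$--bilateral presentation~\eqref{eq:PresE}. Let $g,h\in E$ be conjugate with $|g|+|h|\le n$. By Lemma~\ref{lem: E ddag} and Lemma~\ref{Lem:CTM}, we may pass to cyclically $\{a,b\}$--minimal words $W$ and $Z$ representing conjugates $g',h'$, with $\|W\|+\|Z\|\le n$ and $c(g,g')+c(h,h')\le n$. It then suffices to show $c(g',h')\le Kn$ for a constant $K$. Take a strongly $\{a,b\}$--minimal annular diagram $\Delta$ with boundary labels $W$ and $Z$; in particular $\Delta$ is $\{a,b\}$--minimal. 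The goal is a path $m$ joining the two boundary components with $|\Lab(m)|\le Kn$. This gives $c(W,Z)\le \|W\|+|\Lab(m)|+\|Z\|$, as in Lemma~\ref{lem:upper bound}.

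By Lemma~\ref{Lem:NoArches}, applied separately to the $\{a,b\}$--bands, each of the families of $a$--bands and $b$--bands in $\Delta$ consists either only of radial bands or only of concentric annuli. There are four cases.

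\emph{No faces, or some radial $b$--band.} If $\Delta$ has no faces, then $m$ has length $0$. If some $b$--band is radial, Lemma~\ref{lem: radial b bands} gives $|\Lab(m)|\le \ell(\partial\Delta)+1\le n+1$.

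\emph{Some radial $a$--band.} The side of an $a$--band is a word in $z$ only, since bigons contribute nothing. A side $u$ of a radial $a$--band joins the two boundary components and is labeled $z^k$. We must bound $|k|$ linearly. Every $z$--edge of $u$ is either on $\partial\Delta$ or shared with a $b$--face or another $a$--face. When it is shared with an $a$--face, the two $a$--faces should cancel against each other by $\{a,b\}$--minimality. When it is shared with a $b$--face, we should get a configuration like the $r$--edge analysis in Lemma~\ref{lem: linear m connecting the boundary}. If this analysis gets tangled, we instead cut $u$ at the last $z$--edge that lies on $\partial\Delta$ and bound the remaining segment as in Case~2 of Lemma~\ref{lem: linear m connecting the boundary}. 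The realistic expectation is that a radial $a$--band has all its side edges on $\partial\Delta$ or in $b$--faces touching the boundary, which gives $|k|\le n$.

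\emph{All bands concentric.} This is the main obstacle. In $G$, Lemma~\ref{lem:m in concentric case} only gave a bound of order $f(\ell)$; here we must exploit $a$ and $b$. A concentric $b$--annulus has both sides labeled by the same word of $C$, because the $b$--relations are commutations. So in place of crossing through a face of rank $i$ at cost $2f(i)$, we cross the annulus along a single $b$--edge, which has length $1$. Similarly, a concentric $a$--annulus is crossed by a single $a$--edge. We also need to argue, as in Lemma~\ref{lem:a single concentric}, that there are at most a bounded number of concentric annuli (probably at most one of each letter). Two concentric $b$--annuli separated by a face-free region have sides conjugate in $F(B)$, and the region between them can be removed, contradicting minimality. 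The same works for $a$--annuli, using the automorphism $z\mapsto z^{-1}$. The annular regions between consecutive annuli and between an annulus and the boundary are diagrams over $F(B)$. We cross these with paths of length at most $n$ each, by choosing cyclic shifts, because conjugate cyclically reduced words in a free group are cyclic permutations of each other. Total: $|\Lab(m)|\le 3n+2$.

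Combining the cases gives $c(g,h)\le Kn$ for some constant $K$, hence $\CL_E(n)\preceq n$.
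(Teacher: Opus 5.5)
Your reduction to cyclically $\{a,b\}$--minimal words, the choice of a strongly $\{a,b\}$--minimal annular diagram, and the radial $b$--band case via Lemma~\ref{lem: radial b bands} agree with the paper. The other two cases, however, are not proved, and the tools you propose for them fail as stated.

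\emph{Radial $a$--band case.} You rely on the claim that two $a$--faces sharing a $z$--edge must cancel by minimality, and this claim is false. Take faces labeled $zaza^{-1}$ and $z^{-1}a^{-1}z^{-1}a$, glued along their first $z$--edges. They are not mirror images. The boundary of their union reads $aza^{-1}a^{-1}z^{-1}a$, and it is filled by two $a$--bigons. So neither $\{a,b\}$--minimality nor strong minimality (which only counts ranked $b$--faces) removes this configuration. The rest of that case is explicitly conjectural (``realistic expectation'').

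\emph{Concentric case.} Crossing a $b$--annulus ``along a single $b$--edge of length $1$'' ignores the cost of reaching that edge. The $b$--edges sit at the base points of the syllables $C_i=z^{-f(i)}x^irx^iz^{f(i)}$ of the sides, i.e.\ at the tips of $z^{f(i)}$--hairs. These can be at distance of order $f(i)$ from a boundary word of length of order $i$. Likewise, travelling along the sides of annuli to reach crossing edges is not controlled by $n$, so the bound $3n+2$ does not follow.

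The missing idea is that, in the absence of a radial $b$--band, there are no $b$--faces at all, so one can fall back on a known group. Note first that Lemma~\ref{Lem:NoArches} applies jointly to all $\{a,b\}$--bands, not separately; your worry about $b$--faces bordering $a$--bands is therefore moot. Now go through the possible types of $b$--bands:
\begin{itemize}
\item $b$--arches are excluded by Lemma~\ref{Lem:NoArches};
\item $0$--homotopic $b$--annuli are excluded by Lemma~\ref{Lem:0-hom};
\item concentric $b$--annuli never occur, because both sides of a $b$--annulus carry the same word in the $C_i^{\pm1}$. The annulus can therefore be excised and replaced by a face-free annular diagram, lowering the number of $b$--faces.
\end{itemize}
Boundary $b$--edges lying in no face are ruled out by the paper's cut-edge argument and cyclic minimality when the boundary components are disjoint; if they meet, the bound is trivial.

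Hence, without a radial $b$--band, $\Delta$ is a diagram over $H=\langle x,r,z,a\mid a^2=1,\ z^a=z^{-1}\rangle\cong F_2*D_\infty$. This group is hyperbolic and so has linear conjugator length. A conjugator in $H$ of length at most $C(\|W\|+\|Z\|)+D$ is also a conjugator in $E$ of no greater length. This is exactly how the paper organizes the proof: Case~1 (no $b$--letters on the boundary) via hyperbolicity of $H$, and Case~2 via a radial $b$--band and Lemma~\ref{lem: radial b bands}.
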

\begin{proof} Let $g,h\in E$ be conjugate elements of total word length $|g| + |h| \leq n$ for some $n \in \mathbb{N}.$ By Lemma~\ref{Lem:CTM}, there exist cyclically $\{a,b\}$--minimal words $U,V$ over $(B\cup\{a,b\})^{\pm1}$ representing elements $g'\in g^E$ and $h'\in h^E$, respectively, such that 
\[\|U\|\leq |g|, c(g,g')\leq |g|, \text{ and } \|V\|\leq |h|, c(h,h')\leq |h|.\]

Since $c(g,g')\leq |g|$ and $c(h',h)=c(h,h')\leq |h|$, it is enough to prove that $c(g',h')$ is bounded above linearly in $n$. Let $\Delta$ be a \textcolor{teal}{strongly} $\{a,b\}$--minimal annular diagram with boundary labels $U$ and $V$; such a diagram exists by Lemma~\ref{Lem:van Kampen}(b). 

\emph{Case 1. Neither $U$ nor $V$ contains a $b$--letter.} Suppose that $\Delta$ contains a $b$--face, and let $\Sigma$ be a maximal $b$--band containing it. Since neither boundary component contains a $b$--edge, $\Sigma$ is a $b$--annulus. Lemma~\ref{Lem:0-hom} rules out $0$--homotopic $b$--annuli, so $\Sigma$ is concentric. Let $u$ and $v$ be its outer and inner sides, oriented compatibly. Then $\Lab(u) =_{F(B)} \Lab(v).$ By Lemma ~\ref{Lem:van Kampen}(b), there is an annular diagram over $F(B)$ with boundary labels $\Lab(u)$ and $\Lab(v).$ Replacing $\Sigma$ by this diagram produces an annular diagram with the same boundary labels $U$ and $V$ and fewer $b$--faces, contradicting $\{a,b\}$--minimality. Thus $\Delta$ contains no $b$--faces.
We may regard \eqref{eq:PresE} as a $b$--bilateral presentation over
    \[
            H:=\langle x,r,z,a \mid a^2=1,\ z^a=z^{-1}\rangle
    .\]
   The proof of Lemma~\ref{lem: E ddag}  shows that~\eqref{eq:PresE} satisfies \hyperlink{A}{($\dag$)} and \hyperlink{A}{($\ddag$)} as a $b$--bilateral presentation over $H$. Proposition~\ref{Prop:embed} therefore implies that the natural map $H\to E$ is an isometric embedding, and we identify $H$ with its image in $E$.

   Since $\Delta$ contains no $b$--faces, it is an annular diagram over the above presentation of $H.$  In particular, $g'$ and $h'$ are conjugate in $H$. Observe that $H \cong F_2 * D_\infty,$ which is a hyperbolic group. It is well known that hyperbolic groups have linear conjugator length function;see for example \cite[Chapter~III.$\Gamma$, Lemma~2.9]{BH99}.  Hence there are constants $C,D \geq 0$ such that \[c_H(g',h') \leq C(|g'|_H+|h'|_H)+D.\]
   Since $U$ and $V$ represent $g'$ and $h'$, respectively, and $H$ embeds isometrically in $E,$ we obtain 
   \begin{align*} c_E(g',h') &\leq c_H(g',h')\\
                           &\leq C(|g'|_H+|h'|_H)+D\\ 
                           &\leq C(\|U\|+\|V\|)+D\\ 
                           &\leq 2Cn+D. 
    \end{align*} 
   
   Thus, $c_E(g',h')$ is bounded above linearly in $n.$

\emph{Case 2. At least one of $U$ and $V$ contains a $b$--letter.}
Without loss of generality, suppose that $U$ contains a $b$--letter, and let $u$ denote the boundary component labeled by $U$. If the boundary components of $\Delta$ intersect, choose a common vertex as the initial vertex of both boundary components. The resulting labels are cyclic shifts $U'$ and $V'$ of $U$ and $V$ representing the same element of $E$. Hence $c(g',h')\leq \|U\|+\|V\|\leq n$. We may therefore assume that the boundary components are disjoint.

Suppose that a $b$--edge $e_b$ on $u$ is incident to no face. Since the two boundary components of $\Delta$ are disjoint, $e_b$ is a cut edge of the underlying planar complex. Indeed, otherwise $e_b$ would lie on a simple cycle and create an additional hole in $\Delta.$ Removing the interior of $e_b$ separates $\Delta$ into two components, exactly one of which contains the other boundary component. The excursion of $u$ through the other component is therefore $0$--homotopic. After changing the initial vertex of $u,$ write $u = pq$, where $p$ is this excursion and traverses $e_b$ in both directions. Then $\Lab(p) =_E 1$ and $\Lab(q) =_E \Lab(u)$, while $\Lab(q)$ contains strictly fewer $b$-- letters than $\Lab(u).$ This contradicts the cyclic $\{a,b\}$--minimality of $\Lab(u).$

   So, let $\Sigma$ be the maximal $b$--band containing the $b$--face incident to $e_b.$ Since $\Sigma$ meets $\partial \Delta$ over a $b$--edge it is not a $b$--annulus, while Lemma~\ref{Lem:NoArches} rules out the possibility that it is an arch. Hence $\Sigma$ is radial. By Lemma~\ref{lem: radial b bands}, there exists a path $m$ connecting the two boundary components of $\Delta$ such that 
   \[|\Lab(m)|\leq \ell(\partial\Delta)+1.\]
   Taking the endpoints of $m$ as the initial vertices of the boundary components gives cyclic shifts $U'$ and $V'$ of $U$ and $V$ such that $\Lab(m)^{-1}U'\Lab(m)=_E V'.$
   Since $U'$ and $V'$ represent conjugates of $g'$ and $h'$ respectively, by conjugator of length at most $\|U\|$ and $\|V\|,$ we obtain 
   \begin{align*}
    c(g',h')
    &\leq |\Lab(m)|+\|U\|+\|V\| \\
    &\leq 2\ell(\partial\Delta)+1 \\
    &=2(\|U\|+\|V\|)+1 \\
    &\leq 2n+1.
\end{align*}
Thus, in both cases, $c(g',h')$ is bounded above linearly in $n$. Consequently, $c(g,h)$ is bounded above linearly in $n$, and hence $\CL_E(n)\preceq n$.
\end{proof}

\begin{proof}[Proof of Theorem~\ref{Thm:main2}]
  
    Let $f,g: \mathbb{N} \to \mathbb{N} \cup \{0\}$ be non-decreasing, at least linear functions.  After replacing them by equivalent functions if necessary, we may assume that they are strictly increasing.

    For each $h \in \{f, g\}$, let $G_h$ be the group defined by \eqref{Eq:PresG main1}, and let $E_h$ be its index-two extension defined by \eqref{eq:PresE}. By Theorem~\ref{Thm:main1}, $\CL_{G_h}\sim h$, while Lemma~\ref{lem:E upper bound} gives $\CL_{E_h}\preceq n$. Since $E_h$ contains a nonabelian free subgroup, its center has infinite index - since otherwise, $F(B) \cap Z(E_h)$ would have finite index in $F(B)$, whereas $F(B) \cap Z(E_h) \leq Z(F(B)) = 1.$ Proposition~\ref{Prop:BCL} therefore gives $n\preceq \CL_{E_h}$. Thus,
    $\CL_{G_h} \sim h$ and $\CL_{E_h} \sim n$ for $h \in \{f, g\}.$ We also use the elementary fact $\CL_{A \times B} \sim \max\{CL_A, CL_B\}$ for finitely generated groups $A$ and $B.$ To see this, choose finite generating sets $X$ and $Y$ for $A$ and $B$, so $U = (X \times \{1\}) \cup (\{1\} \times Y)$ is a finite generating set for $A \times B.$ 
    Since conjugation is coordinatewise and $|(u,v)|_U = |u|_X + |v|_Y,$ we have that
\[
c_{A\times B}\bigl((a,b),(a',b')\bigr)
=c_A(a,a')+c_B(b,b')
\]
for every conjugate pair. The natural embeddings of $A$ and $B$ into $A\times B$ give the lower bound, while the preceding formula gives the upper bound:
\[
\max\{CL_A(n),\CL_B(n)\}
\leq \CL_{A\times B}(n)
\leq \CL_A(n)+\CL_B(n).
\]
This proves the estimate.

Set $S_1=G_f\times E_g$ and $S_2=E_f\times G_g$. Their intersection in $E_f\times E_g$ is $G_f\times G_g$, which has index two in each. Hence, $S_1$ and $S_2$ are commensurable. Finally, since $f$ and $g$ are at least linear, the product estimate gives $\CL_{S_1}\sim\max\{\CL_{G_f},\CL_{E_g}\}\sim f$ and $\CL_{S_2}\sim\max\{\CL_{E_f},\CL_{G_g}\}\sim g$.
\end{proof}

\section{Groups with bounded conjugator length}\label{sec:bounded CL}

In this section, we give an algebraic characterization of groups with bounded conjugator length. 

\begin{prop}\label{Prop:BCL}
Let $G$ be a finitely generated group. 
\begin{enumerate}
    \item[(a)] $\CL_G$ is bounded if and only if $|G/Z(G)|<\infty$.
    \item[(b)] If $\CL_G$ is unbounded, then it is at least linear.
\end{enumerate}
\end{prop}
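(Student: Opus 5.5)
The plan is to prove (a) and (b) together through one dichotomy. If $G/Z(G)$ is finite, then $\CL_G$ is bounded. If $G/Z(G)$ is infinite, then $\CL_G(n)\ge cn$ for some constant $c>0$ and all large $n$. Together these give both parts.

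\emph{Finite index center.} Choose coset representatives $g_1,\dots,g_k$ of $Z(G)$ in $G$. Suppose $b=h^{-1}ah$ and write $h=zg_i$ with $z\in Z(G)$. Then $g_i^{-1}ag_i=h^{-1}ah=b$. So every conjugate pair has a conjugator among the $g_i$, and $\CL_{G,X}(n)\le\max_i|g_i|_X$ for all $n$.

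\emph{Infinite index center, first step.} Since $G=\langle X\rangle$ and $Z(G)=\bigcap_{x\in X}C_G(x)$, and $X$ is finite, infinite index of $Z(G)$ forces some generator $x\in X$ to have centralizer $C_G(x)$ of infinite index. (Intersecting finitely many finite-index subgroups would give a finite-index subgroup.) Consequently the conjugacy class $x^G$ is infinite, since it is in bijection with $C_G(x)\backslash G$. I will treat this $x$ as fixed.

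\emph{Infinite index center, main step.} I want to produce, for each $n$, a conjugate pair $x$, $g^{-1}xg$ of total length at most about $2n+1$ whose shortest conjugator has length at least about $n/2$. The natural candidate is a $g$ that is far from $C_G(x)$: every conjugator of $x$ to $g^{-1}xg$ lies in the coset $C_G(x)g$, so $c_X(x,g^{-1}xg)=d(1,C_G(x)g)$, the distance from the identity to that coset. Because $C_G(x)$ has infinite index, there are cosets $C_G(x)g$ at arbitrarily large distance from $1$; only finitely many cosets meet each ball. Walking along a geodesic to a far coset, the distance from $1$ to $C_G(x)g$ changes by at most $1$ per step. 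So for each $n$ I can pick $g$ with $d(1,C_G(x)g)=n$, and I may take $g$ itself to have length exactly $n$. Then $|x|+|g^{-1}xg|\le 2n+2$ while $c_X(x,g^{-1}xg)=n$, which gives $\CL_G(2n+2)\ge n$. Hence $n\preccurlyeq\CL_G$, proving (b), and $\CL_G$ is unbounded, which completes the converse direction of (a).

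\emph{Anticipated obstacle.} The delicate point is the discrete intermediate-value step: I need a coset at distance exactly $n$ (or at least $\ge n/2$) together with a representative of length comparable to $n$. I will handle this directly. Take any coset at distance $>n$ and a geodesic word $g_0$ ending in it, and consider its prefixes $p_k$. The function $k\mapsto d(1,C_G(x)p_k)$ starts at $0$, changes by at most $1$ at each step, and exceeds $n$ at the end. So some prefix $p_k$ has $d(1,C_G(x)p_k)=n$, and necessarily $k\ge n$. Replacing $p_k$ by a shortest element of its coset gives the desired $g$ with $|g|=n$. Monotonicity of $\CL_G$ is immediate from the definition, so equivalence up to $\sim$ then finishes the argument.
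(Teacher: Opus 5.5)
Your proof is correct. The finite-center direction and the choice of a generator $x$ with $|G:C_G(x)|=\infty$ match the paper. The difference is in which quantity you track along the prefixes of a word.

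The paper takes a conjugate $x^t$ of large length and watches the lengths $|x^{t_i}|_X$ along the prefixes $t_i$ of a word for $t$. These change by at most $2$ per step, so some $h=x^{t_j}$ has $n-2\le |h|_X\le n$. It then bounds every conjugator $s$ from below using $|x^s|_X\le 1+2|s|_X$, which gives $\CL_{G,X}(n+1)\ge (n-3)/2$.

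You instead identify the set of all conjugators from $x$ to $g^{-1}xg$ with the coset $C_G(x)g$. This gives $c_X(x,g^{-1}xg)=d(1,C_G(x)g)$ exactly, and you run the discrete intermediate-value argument on this coset distance, which moves by at most $1$ per step. The inequality $|g^{-1}xg|\le 2|g|+1$ is then used in the opposite direction: it bounds the size of the pair rather than the conjugator.

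Your route gives the exact conjugator length of the chosen pair and slightly cleaner bookkeeping, namely $\CL(2n+2)\ge n$. Infiniteness of $G/Z(G)$ enters through the fact that only finitely many cosets meet each ball. The paper's route never needs to describe the full set of conjugators; it uses only that balls are finite and that the conjugacy class of $x$ is infinite. Both arguments are equally elementary and give the same linear lower bound up to constants.
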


\begin{proof}
We fix a finite generating set $X$ of $G$. Suppose first that $|G/Z(G)|<\infty$. Let $T$ be a finite transversal of $Z(G)$ in $G$. If some elements $a, b\in G$ are conjugate by an element $g\in G$, then $g=tz$ for some $t\in T$ and $z\in Z(G)$. Since $z$ is central, we have $t^{-1}at=g^{-1}ag=b$. This implies that $\CL_{G,X}(n)\le \max\{|t|_X \mid t\in T\}$. 

To prove the ``only if" direction in (a), we first note that if the conjugacy class $x^G$ of every $x\in X$ is finite, then $|G:C_G(x)|=|x^G|<\infty$ for all $x\in X$; hence, the index of $$Z(G)=\bigcap_{x\in X} C_G(x)$$ is also finite. Thus, the assumption $|G/Z(G)|=\infty$ implies the existence of $x\in X$ such that the conjugacy class $x^G$ is infinite.  In particular, for every $n\in \NN$, there exists $g\in G$ of length $|g|_X\ge n$ such that $g$ is conjugate to $x$. 

Suppose that $x^t=g$ for some $t\in G$ and let $w$ be a word over the alphabet $X^{\pm 1}$ representing $t$ in $G$. For every $0\le i\le k$, where $k=\| w\|$, let $t_i$ denote the element of $G$ represented by the prefix of $w$ of length $i$. For all $1\le i\le k$, we have $t_i=t_{i-1}y$ for some $y\in X$ and 
$$
\big| |x^{t_i}|_X -|x^{t_{i-1}}|_X\big|=\big| |y^{-1}x^{t_{i-1}}y|_X -|x^{t_{i-1}}|_X\big|\le 2.
$$
Since $|x^{t_0}|_X=|x|_X=1$ and $|x^{t_{k}}|_X=|x^t|_X=|g|_X\ge n$, there exists an index $j$ such that 
\begin{equation}\label{Eq:|h|}
n-2\le |x^{t_j}|_X \le n.
\end{equation}
Note that if $x$ is conjugate to the element $h=x^{t_j}$ by some $s\in G$, then 
$$
|h|_X=|x^s|_X\le 1+2|s|_X.
$$
Combining with the left inequality in (\ref{Eq:|h|}), we obtain $|s|_X\ge (n-3)/2$. By the right inequality in (\ref{Eq:|h|}), we have $|x|_X+|h|_X\le n+1$. Thus,  $\CL_{G,X}(n+1)\ge (n-3)/2$ for all $n\in \NN$. In particular, $\CL_G$ is unbounded.

To prove (b), we only need to note that if $\CL_G$ is unbounded, then $|G/Z(G)|=\infty $ by (a), and under this assumption, the inequality $\CL_{G,X}(n)\succcurlyeq n$ is established above.
\end{proof}

\begin{rem}
    A finitely generated group $G$ satisfies $|G/Z(G)|<\infty $ if and only if $|[G,G]|<\infty$. The backward implication is obvious, and the forward implication is the well-known Schur theorem. 
\end{rem}

Proposition \ref{Prop:BCL} allows us to establish some form of quasi-isometric rigidity for groups with bounded conjugator length functions.

\begin{cor}\label{Cor:BCL}
    Let $G$ be a finitely generated group with bounded conjugator length function. If $H$ is a finitely generated group quasi-isometric to $G$, then $\CL_H\preccurlyeq n$.
\end{cor}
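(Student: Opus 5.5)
The plan is to reduce everything to the algebraic characterization in Proposition~\ref{Prop:BCL}(a) and then transport the finiteness condition along the quasi-isometry. Since $\CL_G$ is bounded, Proposition~\ref{Prop:BCL}(a) gives $|G/Z(G)|<\infty$. The center $Z(G)$ has finite index in the finitely generated group $G$, so it is itself a finitely generated abelian group. Hence $G$ is virtually $\ZZ^k$ for some $k\ge 0$ and is quasi-isometric to $\ZZ^k$.

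First, we handle $k=0$. In this case $G$ is finite, so any $H$ quasi-isometric to $G$ is finite. For a finite group, the conjugator length is trivially bounded by the diameter of the Cayley graph, so $\CL_H\preccurlyeq n$ holds.

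Next, for $k\ge 1$ we invoke quasi-isometric rigidity of $\ZZ^k$. Gromov's polynomial growth theorem, together with Pansu's computation identifying the growth/asymptotic cone, and the standard rigidity result for abelian groups, shows that any finitely generated group quasi-isometric to $\ZZ^k$ is virtually $\ZZ^k$. So $H$ contains a finite-index subgroup $A\cong\ZZ^k$, which we may take to be normal.

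The remaining and main step is to prove that a finitely generated virtually abelian group $H$ has $\CL_H\preccurlyeq n$. I would give a direct argument. Let $A\trianglelefteq H$ be free abelian of finite index, and fix a finite transversal $T$ of $A$ in $H$. Suppose $g^{-1}ag=b$ with $g=\tau\alpha$, where $\tau\in T$ and $\alpha\in A$. Replacing $a$ by $a'=\tau^{-1}a\tau$ costs a bounded conjugator and changes lengths by a bounded amount, so it suffices to solve $\alpha^{-1}a'\alpha=b$ with $\alpha\in A$ of length linear in $|a'|+|b|$. Write $a'=\sigma\beta$ with $\sigma\in T$ and $\beta\in A$. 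Then
\[
\alpha^{-1}a'\alpha=\sigma\cdot\big(\sigma^{-1}\alpha^{-1}\sigma\big)\beta\,\alpha .
\]
Writing $A$ additively and letting $M$ be the automorphism of $A$ induced by conjugation by $\sigma$, this element equals $\sigma\big(\beta+(I-M)\alpha\big)$. The conjugacy equation therefore becomes the linear equation
\[
(I-M)\alpha=\gamma,\qquad \gamma=\beta'-\beta\in\ZZ^k,
\]
where $\beta'$ is the $A$-component of $b$, and $|\gamma|$ is linear in $n$. Since $A$ is undistorted in $H$, word lengths in $A$ and in $H$ are comparable.

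The obstacle is to find a solution $\alpha$ with $\|\alpha\|\le C\|\gamma\|$. A solution of an integer linear system $(I-M)\alpha=\gamma$, whenever one exists, can be chosen with size linearly bounded in $\|\gamma\|$: shift any solution by an element of the lattice $\ker(I-M)\cap\ZZ^k$, or use Smith normal form. Only finitely many matrices $M$ occur, one for each $\sigma\in T$, so the constant $C$ is uniform. This gives $\CL_H(n)\le Cn+C'$, which proves $\CL_H\preccurlyeq n$.
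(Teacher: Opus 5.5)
Your proposal is correct and follows the same route as the paper: Proposition~\ref{Prop:BCL}(a) gives $|G/Z(G)|<\infty$, quasi-isometric rigidity of virtually abelian groups makes $H$ virtually abelian, and virtually abelian groups have at most linear conjugator length. The only difference is in the last step. The paper cites Sale's thesis for it, whereas you prove it directly by reducing to the integer linear system $(I-M)\alpha=\gamma$ and using Smith normal form to find a solution of linearly bounded size, which makes that step self-contained.
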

\begin{proof}
    By Proposition \ref{Prop:BCL}, we have $|G/Z(G)|<\infty$. It is well-known and easy to derive from the combination of Gromov's polynomial growth theorem \cite{Gro81} and Bass' growth formula for nilpotent groups \cite{Bass} that the class of finitely generated virtually abelian groups is quasi-isometrically rigid (for an alternative argument using asymptotic cones, see Section 4.3 of \cite{Dru}). Thus, $H$ is also virtually abelian. For such groups, the inequality $\CL_H\preccurlyeq n$ is easy to prove (see \cite[Corollary 2.3.19]{S15a}).
\end{proof}

\begin{rem}
The inequality $\CL_H\preccurlyeq n$ in Corollary \ref{Cor:BCL} cannot be improved. Indeed, the infinite dihedral group $D_\infty =\mathbb Z \rtimes \ZZ_2$ is quasi-isometric to $\ZZ$ and has linear conjugator length function (e.g., by the combination of Proposition \ref{Prop:BCL} and \cite[Corollary 2.3.19]{S15a}).
\end{rem}

\end{document}